\documentclass[11pt]{amsart}
\usepackage{amsmath,amssymb,amsthm,mathtools}
\usepackage[margin=1.1in]{geometry}
\usepackage{graphicx}
\usepackage{appendix}
\usepackage{hyperref}
\hypersetup{colorlinks=true,
            linkcolor=blue,
            anchorcolor=blue,
            citecolor=blue,
            pdftitle={Non-homogeneous curvature flows in a hemisphere},
            pdfauthor={Hongyi Sheng, Weimin Sheng, Jiazhuo Yang},
            pdfkeywords={non-homogeneous curvature flow, convexity, hemisphere, elementary symmetric functions}}

\newtheorem{theorem}{Theorem}[section]
\newtheorem{lemma}[theorem]{Lemma}

\newtheorem{corollary}[theorem]{Corollary}
\theoremstyle{definition}

\theoremstyle{remark}
\newtheorem{remark}[theorem]{Remark}

\numberwithin{equation}{section}

\newcommand{\Snn}{\mathbb{S}^{n+1}}
\newcommand{\Snp}{\mathbb{S}^{n+1}_{+}}

\def\p{\partial}
\def\n{\nabla}

\let\ringaccent\r
\def\r{\rangle}

\title[Non-homogeneous curvature flows in a hemisphere]
{Non-homogeneous curvature flows in a hemisphere}

\author{Hongyi Sheng}
\address{Institute for Theoretical Sciences, Westlake Institute for Advanced Study, Westlake University,
Hangzhou 310030, China}
\email{shenghongyi@westlake.edu.cn}

\author{Weimin Sheng}
\address{School of Mathematical Sciences, Zhejiang University, Hangzhou 310058, China}
\email{weimins@zju.edu.cn}

\author{Jiazhuo Yang}
\address{School of Mathematical Sciences, Zhejiang University, Hangzhou 310058, China}
\email{yangjiazhuo@zju.edu.cn}

\date{}
\subjclass[2020]{53E40, 35K55, 53C21}

\keywords{non-homogeneous curvature flow, convexity, 
 hemisphere, elementary symmetric functions}

\begin{document}

\begin{abstract}
Let $\Snp$ be the open hemisphere of the unit sphere $\Snn$ centred at
$o$.  We study the non-homogeneous curvature flow
$\partial_tX=-f(\rho)\sigma_k^\alpha\nu$ of smooth, closed, strictly
convex hypersurfaces enclosing $o$, where $\rho$ is the geodesic
distance to $o$.  We consider both the supercritical regime
$\beta>1+k\alpha$ and the critical regime $\beta=1+k\alpha$, where
$\beta$ is the growth order of the profile at the origin,
$f(r)\sim r^\beta$ as $r\downarrow0$.  Under the structural
condition that $f^{1/(1+k\alpha)}$ is convex, we prove long-time
existence and preservation of strict convexity (for $n\ge 2$ in the critical case).  The normalized radial
function converges smoothly and exponentially to a constant: to $1$ in
the supercritical case and to a data-dependent constant $R_\infty>0$
in the critical case.  Thus the normalized radial graphs become round,
while the original hypersurfaces contract to $o$. 
\end{abstract}

\maketitle
\tableofcontents

\section{Introduction}

Curvature flows of closed convex hypersurfaces have been studied extensively
over the past several decades. In Euclidean space, consider the contracting flow
\[
\partial_tX=-F(\kappa)\nu,
\]
where $F$ is a smooth symmetric function of the principal curvatures, strictly
increasing in each argument. For $F=H$, Huisken \cite{Hui84} proved that every smooth,
closed, strictly convex hypersurface contracts to a round point in finite time.
Chow \cite{Chow85, Chow87} established the corresponding results for $F=\sigma_n^{1/n}$ and
$F=\sigma_2^{1/2}$, and Andrews \cite{And94, And07} extended them to general degree-one
homogeneous speeds which are convex, or concave and either vanishing on the
boundary of the positive cone or inverse-concave. This completes the classical
degree-one theory, in which convex solutions always contract to round points.

For speeds not homogeneous of degree one, the limiting shape depends
essentially on the exponent. The model problem is the flow by powers of the
Gauss curvature, $\partial_tX=-K^\alpha\nu$: Tso \cite{Tso85} proved contraction to a
point for $\alpha=1$, and Andrews \cite{And99} proved convergence to a round point for
$n=2$, resolving Firey's conjecture \cite{Firey74}. Through the combined works of
Andrews--Chen \cite{AC12}, Guan--Ni \cite{GN17}, Andrews--Guan--Ni \cite{AGN16}, and
Brendle--Choi--Daskalopoulos \cite{BCD17}, every smooth, closed, strictly convex
solution is now known to contract to a round point throughout the range
$\alpha>1/(n+2)$, while at the critical exponent $\alpha=1/(n+2)$,
corresponding to the affine normal flow, the normalized limit is an ellipsoid
\cite{BCD17}. For the more general flows $\partial_tX=-\sigma_k^\alpha\nu$,
Li--Wang--Wu \cite{LWW21} proved that, for closed, strictly convex, axially symmetric
hypersurfaces in Euclidean space and in the sphere, the properly rescaled flow
converges exponentially to a sphere whenever $\alpha\in[1/k,c(n,k)]$ for some
$c(n,k)>1/k$.

Curvature flows have also been studied in non-Euclidean space forms. In
hyperbolic space, Andrews--Chen \cite{AC} proved finite-time contraction and
asymptotic roundness for compact surfaces with positive intrinsic scalar
curvature under several degree-one speeds, and proved the analogous
result for mean
curvature flow in higher dimensions under positive intrinsic Ricci curvature.
In the sphere, Gerhardt \cite{Ger15} used polar duality to couple the contracting flow
with speed $F$ to the expanding flow with dual speed $\tilde F^{-1}$: the
contracting hypersurfaces shrink to a point, the expanding ones converge to
the equator of the opposite hemisphere, and after rescaling both flows become
exponentially round. Chen--Huang \cite{CH} studied the flow by $K^\alpha$ in the
sphere and in hyperbolic space, proving contraction to a point for every
$\alpha>0$ and smooth convergence of the rescaled hypersurfaces to a geodesic
sphere when $\alpha>1/(n+2)$. Spherical polar duality also underlies the work
of Guang--Li--Wang \cite{GLW24}, who combined a variational min--max construction with
a Gauss curvature flow to solve the Minkowski problem in the sphere.

A parallel line of work allows the speed to depend explicitly on position. In
Euclidean space, Li--Sheng--Wang introduced the anisotropic Gauss curvature
flow with speed $f(\nu)r^\beta K$ as a parabolic approach to the Aleksandrov
and dual Minkowski problems \cite{LSWJEMS}, and subsequently treated the fully nonlinear
speed $r^\beta\sigma_k$ \cite{LSW20}. Li--Xu--Zhang \cite{LXZ} extended the convergence
theory to the power-type speed $r^{\alpha/\beta}\sigma_k^{1/\beta}$ on
star-shaped $k$-convex hypersurfaces, and Sheng--Yang \cite{SY} replaced the power
of $r$ by a general function and studied the non-homogeneous speed
$f(r)\sigma_k^\alpha$. In hyperbolic space, Hong \cite{Hong21} proved exponential
convergence of the normalized flow with speed
$(\sinh\rho)^{\alpha/\beta}\sigma_k^{1/\beta}$ to a geodesic sphere, and the
authors' companion paper \cite{hyp} treats the hyperbolic analogue of the flow
studied here.

Motivated by these developments, we study the spherical counterpart of the
radial flows above. Let $\mathbb{S}^{n+1}$ be the $(n+1)$-dimensional unit
sphere, fix $o\in\mathbb{S}^{n+1}$, and denote by
$\mathbb{S}^{n+1}_+:=B_{\pi/2}(o)$ the open hemisphere centred at $o$. In
geodesic polar coordinates at $o$, the spherical metric is
\begin{equation}\label{eq:metric}
\bar g=d\rho^2+\sin^2\rho\,\sigma_{\mathbb{S}^n},
\end{equation}
where $\rho$ is the geodesic distance from $o$ and $\sigma_{\mathbb{S}^n}$ is
the standard metric on $\mathbb{S}^n$. Let $X_0\colon M\to\mathbb{S}^{n+1}_+$
be a smooth, closed, strictly convex hypersurface enclosing $o$, and let
$f\in C^\infty((0,\pi/2))\cap C^0([0,\pi/2))$ satisfy $f(0)=0$ and $f(r)>0$
for $r>0$. We consider the non-homogeneous curvature flow
\begin{equation}\label{eq:unnormalized-flow}
\begin{cases}
\dfrac{\partial X}{\partial t}=-f(\rho)\sigma_k^\alpha\,\nu,\\[2mm]
X(\cdot,0)=X_0,
\end{cases}
\end{equation}
where $\nu$ is the outward unit normal and $\sigma_k$ is the $k$-th elementary
symmetric function of the principal curvatures.

To the best of our knowledge, contracting flows of the form \eqref{eq:unnormalized-flow} in
the sphere have not previously been studied; the results below appear to be
new even for the model profile $f(\rho)=\rho^\beta$. They apply to every
$\alpha>0$, without an axial-symmetry or curvature-pinching assumption. Two
features of the spherical setting are essential. First, the sphere admits no
ambient homothety, so the normalization is performed on the radial function in
geodesic normal coordinates at the contraction point, rather than on the
embedding. Second, the positive ambient curvature enters the curvature
estimate with the favourable sign: the background-curvature term that is
unfavourable in hyperbolic space here supplies the coercive quadratic term
that preserves strict convexity. We take $r^\beta$ as the model profile near
the origin and treat the supercritical and critical orders separately.

The equation \eqref{eq:unnormalized-flow} is parabolic on the G\ringaccent{a}rding cone
$\Gamma_k^+=\{\kappa\in\mathbb{R}^n:\sigma_j(\kappa)>0,\ j=1,\dots,k\}$ \cite{Gar59}.
In this paper we work on the strictly convex cone $\Gamma_n^+$ and use the
inverse Weingarten map to obtain a quantitative lower bound for all normalized
principal curvatures.

\subsection{The profile conditions}
Throughout the paper $f(0)=0$, $f(r)>0$ for $r>0$, and $\beta\ge 1+k\alpha$.
In the supercritical case $\beta>1+k\alpha$, we write $f(r)=r^\beta+g(r)$ and
impose the finite-order flatness condition \eqref{eq:C0-supercritical-flatness-spherical}. In the critical
case $\beta=1+k\alpha$, the remainder $g$ is required to decay strictly faster
than the model term, as in \eqref{eq:C0-critical-remainder-spherical}. In both cases the structural
condition
\[
\bigl(f^{\frac{1}{1+k\alpha}}\bigr)''\ge0
\]
is the radial convexity input in the $C^2$ estimate.

\subsection{Main results}

\begin{theorem}\label{thm:main}
Let $1\le k\le n$, $\alpha>0$. Let $M_0\subset\mathbb{S}^{n+1}_+$ be a smooth,
closed, strictly convex hypersurface enclosing $o$. Let
\begin{equation}\label{eq:C0-supercritical-decomposition-spherical}
\beta>1+k\alpha,\qquad m:=\lfloor\beta\rfloor,\qquad g(r):=f(r)-r^\beta.
\end{equation}
Assume that
\begin{equation}\label{eq:C0-supercritical-flatness-spherical}
g\in C^{m+1}([0,\pi/2)),\qquad g'(0)=\cdots=g^{(m)}(0)=0,
\end{equation}
and
\begin{equation}\label{eq:main-profile-root-convexity}
\bigl(f^{\frac{1}{1+k\alpha}}\bigr)''\ge0,\qquad r\in(0,\pi/2).
\end{equation}
Then the solution of \eqref{eq:unnormalized-flow} exists smoothly for all $t\ge0$, remains
strictly convex, and converges to $o$ as $t\to\infty$. Moreover, its
normalized radial function
\[
\widetilde\rho(\theta,\tau)=\lambda(t(\tau))\,\rho(\theta,t(\tau))
\]
converges to $1$ in $C^\infty(\mathbb{S}^n)$ as $\tau\to\infty$.
\end{theorem}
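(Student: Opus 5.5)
We write the evolving hypersurface as a radial graph $\rho(\theta,t)$ over $\mathbb{S}^n$ in the polar coordinates \eqref{eq:metric}. The flow \eqref{eq:unnormalized-flow} then becomes the scalar parabolic equation
\[
\partial_t\rho=-f(\rho)\,\sigma_k^\alpha(\kappa)\,\frac{\sqrt{\sin^2\rho+|\nabla\rho|^2}}{\sin\rho},
\]
with $\kappa$ expressed through $\rho$, $\nabla\rho$ and $\nabla^2\rho$. Comparison with geodesic spheres $\rho=r(t)$ solving $r'=-f(r)\,\cot^{k\alpha}r\,\binom nk^\alpha$ gives the first pieces of information. Near $0$ this ODE behaves like $r'\approx -c\,r^{\beta-k\alpha}$ with $\beta-k\alpha>1$. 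Hence $r(t)\sim (ct)^{-1/(\beta-k\alpha-1)}$, the solution exists for all time, and it tends to $0$ only as $t\to\infty$. Sandwiching $M_t$ between an inner and an outer sphere shows that $M_t$ exists as long as it stays smooth and shrinks to $o$.

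We choose $\lambda(t)$ as the reciprocal of the model spherical solution, $\lambda=1/r(t)$, and a new time $\tau$ with $d\tau/dt=\lambda^{\beta-k\alpha-1}$. Since $\sin\rho\approx\rho$ and $f\approx r^\beta$ for small $\rho$, the normalized equation for $\widetilde\rho$ is a perturbation of the Euclidean normalized flow with speed $r^\beta\sigma_k^\alpha$. The error terms are of relative size $O(\lambda^{-2})$ from the geometry, and of size $g(\rho)/\rho^\beta$ from the profile, which the flatness condition \eqref{eq:C0-supercritical-flatness-spherical} makes $o(1)$. These errors are integrable in $\tau$ because $\lambda$ grows exponentially in $\tau$.

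The a priori estimates come in the following order.

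1. \emph{$C^0$.} Comparing the inner and outer spheres with the model solution bounds $\widetilde\rho$ between two positive constants. We also want $\max\widetilde\rho/\min\widetilde\rho\to1$. In the supercritical regime the ODE for the ratio of two spherical solutions is contracting, since $\beta-k\alpha>1$.

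2. \emph{$C^1$.} We bound $|\nabla\log\rho|$ using convexity and enclosure of $o$, via the standard support-function estimate $u\ge c\rho$ for convex bodies containing a ball.

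3. \emph{Speed bounds.} We bound the normalized speed $\Phi=f\sigma_k^\alpha$ above and below through the maximum principle applied to $\Phi/(u-\epsilon)$ type quantities.

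4. \emph{Curvature and convexity.} This is the main obstacle: we need a two-sided bound on the normalized principal curvatures together with preservation of strict convexity. We would work with the inverse Weingarten map $b^{ij}$, i.e. the principal radii, and apply the maximum principle to its largest eigenvalue $\lambda\cdot\max b$. In the evolution of $b$, the ambient curvature $+1$ of the sphere produces a term $-c\,\Phi\,\mathrm{tr}\,b/\sigma_k\cdots$ of favourable sign. The terms carrying $\nabla^2 f(\rho)$ are grouped through $F=f^{1/(1+k\alpha)}$. The hypothesis \eqref{eq:main-profile-root-convexity}, $F''\ge0$, is what controls the Hessian of $F(\rho)$ along the hypersurface so that it does not destroy the bound. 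This is the delicate point, and we would first try to reduce it to the Euclidean computation of Li--Sheng--Wang and Sheng--Yang \cite{LSW20,SY}, tracking the extra spherical terms. An upper bound for $\kappa$ then follows from the speed lower bound combined with $b\le C$, using $\sigma_k^\alpha\ge c$ and $\kappa_i\ge 1/C$.

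With uniform $C^2$ bounds and strict convexity, the equation is uniformly parabolic and concave after taking a suitable power of $\sigma_k$ (for instance $\log\sigma_k$). Krylov--Safonov and Schauder estimates then give uniform $C^\infty$ bounds on $\widetilde\rho$, which yields long-time existence. For convergence, a monotone quantity or the oscillation decay from the first step forces $\mathrm{osc}\,\widetilde\rho\to0$. We then linearize around the constant solution $1$: the perturbation terms decay exponentially in $\tau$, and the linearized operator has a spectral gap on $\mathbb{S}^n$. Interpolating against the uniform higher-order bounds gives smooth exponential convergence $\widetilde\rho\to1$.
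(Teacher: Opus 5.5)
Your outline follows the paper's architecture for the a priori estimates: sphere barriers for $C^0$, the support function for $C^1$, a Tso-type quotient for the speed, the largest eigenvalue of $\check h=\lambda h^{-1}$ for convexity, then Krylov--Safonov, Evans--Krylov and Schauder. But the central step is only announced, and in the order you propose it does not close.

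\textbf{Ordering of the speed and convexity estimates.} In $\mathbb S^{n+1}$ the speed equation has an extra term $\lambda^{\beta-2}f\widetilde\Phi\,\dot F^{ij}\widetilde g_{ij}$, coming from $+\Phi\delta_i{}^j$ in $\partial_t h_i{}^j$. At a maximum of $\log\widetilde\Phi-\log(\widetilde u-b)$ it survives as $+\lambda^{\beta-2}f\,\alpha\sigma_k^{\alpha-1}(n-k+1)\sigma_{k-1}(\hat h)$. The good term is only of size $\sigma_k^{\alpha-1}\sigma_1\sigma_k$. Take $k\ge3$ and $\hat\kappa=(N,\dots,N,\varepsilon,\dots,\varepsilon)$ with $k-1$ entries $N$, $\sigma_k$ fixed and $N\to\infty$: then the bad term wins.

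So the upper speed bound requires a quantitative lower bound $\hat\kappa_{\min}\ge c$ first (which gives $\dot F^{ij}\widetilde g_{ij}\le CF$). The convexity estimate must therefore be closed using the \emph{lower} speed bound alone. This is exactly the paper's order:
\begin{itemize}
\item $\widetilde\Phi\ge c$ from a logistic inequality;
\item $\mathcal L\Lambda\le-c\widetilde\Phi\Lambda^2+C\widetilde\Phi\Lambda+C\widetilde\Phi+C\Lambda$, and $\widetilde\Phi\ge c$ turns the right side into $\widetilde\Phi(-c\Lambda^2+C\Lambda+C)$;
\item only afterwards, Tso's argument for the upper speed bound.
\end{itemize}
Also, an upper bound for $\hat\kappa$ comes from $\sigma_k\le C$ together with $\hat\kappa_i\ge C^{-1}$, not from the speed lower bound as you wrote.

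\textbf{Source of coercivity in the $\Lambda$-estimate.} The ambient-curvature terms do have a good sign: together they are $\le-\lambda^{\beta-2}fF\Lambda^2=-\lambda^{-2}\widetilde\Phi\Lambda^2$. In normalized variables this degenerates as $\tau\to\infty$, so it gives no uniform bound.

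The uniform term comes from $\nabla^2 f(\rho)$ via $\nabla^2\rho=\cot\rho\,(g-d\rho\otimes d\rho)-v^{-1}h$. This yields $-\Lambda^2F\lambda^{\beta-2}f'\cot\rho\,(1-|\widetilde\nabla_1\widetilde\rho|^2)$, with $\lambda^{\beta-2}f'\cot\rho\ge c\,\widetilde\rho^{\,\beta-2}$ and $1-|\widetilde\nabla_1\widetilde\rho|^2\ge v^{-2}$.

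To stop the remaining terms from destroying this, you need inverse concavity of $G=\sigma_k^{1/k}$. For $F=G^{k\alpha}$, every $\alpha>0$, and $\eta=\widetilde\nabla_1\hat h$, it gives $\ddot F^{pq,rs}\eta_{pq}\eta_{rs}+2\dot F^{ij}\check h^{pq}\eta_{ip}\eta_{jq}\ge k\alpha(k\alpha+1)G^{k\alpha-2}(\widetilde\nabla_1G)^2$. This extra gain absorbs the mixed term $-2\Lambda^2\lambda^{\beta-1}f'\,\widetilde\nabla_1\widetilde\rho\,\widetilde\nabla_1F$ by completing the square, at the price $\tfrac{k\alpha}{k\alpha+1}(f')^2/f$. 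The hypothesis $(f^{1/(1+k\alpha)})''\ge0$ is exactly $f''-\tfrac{k\alpha}{1+k\alpha}(f')^2/f\ge0$, so the coefficient of $|\widetilde\nabla_1\widetilde\rho|^2$ is nonnegative. Without this mechanism your Step 4 is a plan rather than an argument.

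\textbf{Convergence step.} Your convergence argument is a valid and simpler alternative to the paper's. Since $\mu=\beta-k\alpha-1>0$, the normalized sphere ODE $\widetilde r'=\gamma_0\widetilde r\bigl(1-\widetilde r^{\mu}(1+o(1))\bigr)$ has $1$ as a global attractor. So the inner and outer barriers already force $\widetilde\rho\to1$ uniformly, and uniform $C^m$ bounds plus interpolation give $C^\infty$ convergence; no linearization is needed for the statement as posed.

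The paper instead proves exponential decay of $|\nabla\log\tan(\rho/2)|$ by a maximum principle and identifies the limit through an ODE for the mean. That route costs more, but it gives rates and also covers the critical case, where barriers of different radii need not share a normalized limit.
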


\begin{theorem}\label{thm:main-critical}
Let $1\le k\le n$, $\alpha>0$, and suppose $n\ge2$. Let
$M_0\subset\mathbb{S}^{n+1}_+$ be a smooth, closed, strictly convex
hypersurface enclosing $o$. Set
\begin{equation}\label{eq:C0-critical-decomposition-spherical}
\beta=1+k\alpha,\qquad g(r):=f(r)-r^{1+k\alpha}.
\end{equation}
Assume that there exists $\delta>0$ such that
\begin{equation}\label{eq:C0-critical-remainder-spherical}
g(r)=O\bigl(r^{1+k\alpha+\delta}\bigr)\quad\text{as }r\downarrow0.
\end{equation}
Set
\begin{equation}\label{eq:C0-critical-regularity-order-spherical}
N:=\lceil\beta+\delta\rceil
\end{equation}
and assume that $g\in C^N([0,\pi/2))$. Assume moreover that
\begin{equation}\label{eq:critical-profile-root-convexity}
\bigl(f^{\frac{1}{1+k\alpha}}\bigr)''\ge0,\qquad r\in(0,\pi/2).
\end{equation}
Then the solution of \eqref{eq:unnormalized-flow}  exists smoothly for all $t\ge0$, remains
strictly convex, and converges to $o$ as $t\to\infty$. Moreover, there exists
a constant $R_\infty>0$ such that
\[
\widetilde\rho(\cdot,\tau)\longrightarrow R_\infty\quad\text{in }C^\infty(\mathbb{S}^n)
\]
as $\tau\to\infty$. In general, $R_\infty$ depends on the initial hypersurface
and need not equal $1$.
\end{theorem}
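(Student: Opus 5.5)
We plan to run the argument for Theorem~\ref{thm:main} with the changes dictated by criticality. First we write the evolving hypersurface as a radial graph $\rho(\theta,t)$ over $\mathbb{S}^n$ in the geodesic polar coordinates \eqref{eq:metric}. Since $\beta=1+k\alpha$, a geodesic sphere of radius $r$ has speed $f(r)\sigma_k^\alpha\approx r^{1+k\alpha}\binom nk^\alpha (\cot r)^{k\alpha}\approx \binom nk^\alpha r$ for small $r$. So the comparison spheres satisfy $r'\approx -c\,r$ and decay exponentially, $r(t)\approx r_0e^{-ct}$ with $c=\binom nk^\alpha$. We therefore normalize with $\lambda(t)=e^{ct}$ and the time $\tau=t$, which makes the model flow scale-invariant to leading order. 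Barrier spheres inside and outside $M_0$ then give a two-sided bound $0<c_1\le\widetilde\rho\le c_2$ on the normalized radial function. The remainder hypothesis \eqref{eq:C0-critical-remainder-spherical}, together with the curvature corrections $\cot r-1/r=O(r)$, contributes only an integrable factor $O(e^{-\delta' t})$ to the ODE. Consequently the spheres do not drift to $1$; each approaches its own limit radius, and this is the source of the data dependence of $R_\infty$.

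Next we prove a priori estimates for the normalized flow, following the steps of the supercritical case.
\begin{itemize}
\item[(i)] A gradient bound for $\log\widetilde\rho$, obtained from the maximum principle applied to $|\nabla\log\rho|^2$ and using convexity; equivalently, the support function stays comparable to $\rho$.
\item[(ii)] Bounds above and below on the speed $f(\rho)\sigma_k^\alpha/\rho$, via the auxiliary quantity $\Phi=f\sigma_k^\alpha/(u-\epsilon\rho)$ in the style of Tso.
\item[(iii)] Curvature bounds. We apply the maximum principle to the largest eigenvalue of the inverse Weingarten map, scaled by $\rho$. Here the structural condition \eqref{eq:critical-profile-root-convexity} and the positive ambient curvature (the favourable term) give preservation of strict convexity with a uniform lower bound on the normalized principal curvatures. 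The restriction $n\ge2$ enters at this point, where we need room in the Gauss equation and concavity of $\sigma_k^{1/k}$ restricted to $\Gamma_n^+$.
\end{itemize}
Because the normalized equation is now uniformly parabolic and concave after passing to $\sigma_k^{1/k}$ or to the inverse form, Krylov--Safonov and Schauder theory give $C^\infty$ bounds. The finite regularity $g\in C^N$ with $N=\lceil\beta+\delta\rceil$ is what makes the coefficient $f(\rho)/\rho^{\beta}$, expressed in the rescaled variable, uniformly $C^{N-\beta}$ and still asymptotically decaying. Long-time existence and contraction to $o$ then follow.

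For convergence we write the normalized equation as $\partial_\tau\widetilde\rho=\mathcal{F}(\widetilde\rho)+E(\theta,\tau)$, where $\mathcal F$ is the Euclidean critical operator, which is exactly scale-invariant, and $\|E\|_{C^l}=O(e^{-\delta'\tau})$. We use a monotone quantity: $\max\widetilde\rho$ is almost nonincreasing and $\min\widetilde\rho$ almost nondecreasing, up to integrable errors. Both therefore have limits $R^\pm$. The oscillation should tend to $0$. To show this we apply the strong maximum principle, or a Harnack argument, to the limit flow of $\mathcal{F}$ along subsequences: a solution of the autonomous Euclidean critical flow with constant max and min must be a constant sphere. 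Hence $R^+=R^-=:R_\infty>0$. For the exponential rate we linearize $\mathcal F$ at the sphere of radius $R_\infty$. The linearized operator is $\Delta_{\mathbb{S}^n}+n$ up to a positive factor, with kernel consisting of constants (scaling) and first spherical harmonics (translations). Translations are excluded because the flow is pinned at $o$, since $f$ depends on the distance to $o$. Everything else decays at a rate bounded by the spectral gap, and the errors are $O(e^{-\delta'\tau})$. Interpolation with the $C^\infty$ bounds then upgrades the convergence to $C^\infty$ exponential convergence.

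The main obstacle will be step (iii), the curvature pinching that preserves strict convexity in the critical scaling. There the good terms from \eqref{eq:critical-profile-root-convexity} are exactly balanced by the homogeneity, with no spare factor of $\beta-1-k\alpha$ to absorb errors. I would first try to absorb the errors using the ambient curvature term $\sum\kappa_i^{-1}$, combined with the exponential smallness of $g$ near $o$.
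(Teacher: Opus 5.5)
Your a priori estimates follow the paper's route: $C^0$ from the extremal radii with the integrable error $E=A-1$, $C^1$ from the support function, a lower speed bound, the maximum principle for $\Lambda=\lambda_{\max}(\check h)$ using inverse concavity of $\sigma_k^{1/k}$ and root convexity of $f$, then Evans--Krylov and Schauder. Your convergence step is a genuinely different route.

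The paper proves $\max|\nabla\varphi|^2\le Ce^{-c\tau}$ for $\varphi=\log\tan(\rho/2)$ by the maximum principle, then integrates $|d\overline\rho/d\tau|\le Ce^{-c\tau}$ to get $R_\infty$ with an exponential rate. In the critical case the profile term $f'-(1+k\alpha)f\cot\rho$ is nonnegative but gives no coercivity after scaling. The decay comes instead from the Ricci identity on the parameter sphere, via the term $-(\lambda^{-1}\cot\rho)^2\sum_{a\ge2}\partial\sigma_k^\alpha/\partial\hat\kappa_a$. That sum is empty for $n=1$, and this is the only place where $n\ge2$ enters.

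Your argument uses almost monotone $\max\widetilde\rho$ and $\min\widetilde\rho$, eternal subsequential limits solving the autonomous Euclidean critical flow, and the strong maximum principle against the stationary constant solution. Given the uniform $C^\infty$ bounds this is sound, and it already gives the $C^\infty$ convergence the theorem asserts. It never uses $n\ge2$, but it gives no rate.

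Three of your specific claims need correcting.

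\textbf{Step (iii).} Neither $n\ge2$ nor criticality plays any role there, and there is no balance to break. The coercive term is $\vartheta\lambda^{\beta-2}f'(r)\cot r\ge c\widetilde\rho^{\,\beta-2}$. It comes from the radial Hessian identity $\widetilde\nabla^2\widetilde\rho=\lambda^{-1}\cot(\widetilde\rho/\lambda)(\widetilde g-d\widetilde\rho\otimes d\widetilde\rho)-v^{-1}\hat h$ together with the $C^1$ bound, and it is uniform for all $\beta\ge1+k\alpha$. Root convexity is used only to make the coefficient $f''-\frac{k\alpha}{1+k\alpha}(f')^2/f$ of $|\widetilde\nabla_1\widetilde\rho|^2$ nonnegative. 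The linear term $C\Lambda$ is absorbed using the lower speed bound, which gives $\mathcal L\Lambda\le\widetilde\Phi(-c\Lambda^2+C\Lambda+C)$. The remedy you propose would not work: the ambient-curvature contribution carries the factor $\lambda^{\beta-2}f=O(\lambda^{-2})$, so it is usable only for its sign.

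\textbf{Ordering of (ii) and (iii).} The paper's Tso-type upper speed bound uses the lower curvature bound to control $\lambda^{\beta-2}f\dot F^{ij}\widetilde g_{ij}$. So step (iii), which needs only the lower speed bound, must come before the upper bound in (ii).

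\textbf{Linearization.} Your operator is wrong. In $\psi=\log\widetilde\rho$, the Euclidean critical flow linearizes at a centred sphere to $\partial_\tau\psi=\alpha\binom{n}{k}^{\alpha-1}\binom{n-1}{k-1}\Delta_{\mathbb S^n}\psi$. Its kernel is the constants only. First harmonics decay because the speed depends on $|X|$, so there is no translation invariance, not because of a pinning argument. With $\Delta+n$ the constants would be an unstable mode. If you want a rate, use the correct operator together with a standard argument near the curve of equilibria, or use the paper's gradient estimate.
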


\begin{remark}[The Gauss curvature case]\label{rem:gauss-curvature-case}
When $k=n$, one has $\Gamma_n^+=\Gamma_n$ and $\sigma_n(h)=K$, the Gauss
curvature. Theorems \ref{thm:main} and \ref{thm:main-critical} therefore
give a complete convex theory for the radially non-homogeneous Gauss curvature
flow
\[
\partial_tX=-f(\rho)K^\alpha\nu,\qquad \alpha>0,
\]
and no additional star-shapedness assumption is required; the geometric
justification is given at the beginning of Subsection 2.2. The determinant
structure is self-dual: for every positive definite endomorphism $A$,
\[
\sigma_n(A^{-1})=\sigma_n(A)^{-1},
\]
so for $F=\sigma_n^\alpha$ the dual function $F_*(A):=F(A^{-1})^{-1}$ agrees
with $F(A)$. Although $F$ is concave only when $\alpha\le1/n$, the
inverse-Weingarten argument used here applies for every $\alpha>0$, since its
inverse-concavity input is applied only to $\sigma_n^{1/n}$. Finally, the
smooth convergence $\widetilde\rho(\cdot,\tau)\to R_\infty$ (with $R_\infty=1$ in
the supercritical case) and the radial graph formula yield
\[
\hat\kappa^j_i\longrightarrow R_\infty^{-1}\delta^j_i,\qquad
\sigma_n(\hat h)\longrightarrow R_\infty^{-n},\qquad
\frac{\hat\kappa_{\max}}{\hat\kappa_{\min}}\longrightarrow1,
\]
so the normalized hypersurfaces become round also at the level of their
principal curvatures.
\end{remark}

\subsection{Outline of the proof}
The normalization is performed at the level of the radial function in normal
coordinates at $o$. After recording the radial graph geometry and the
normalized evolution equations (Section 2), we prove uniform $C^0$ and $C^1$
estimates and a positive lower bound for the normalized speed
(Sections 3--5). The central step is the $C^2$ estimate: we apply the maximum
principle, in the barrier sense, to the largest eigenvalue of the inverse
normalized Weingarten map. Inverse concavity controls the gradient terms,
while the positive ambient curvature supplies the favourable quadratic term
that preserves strict convexity (Lemma \ref{lem:convexity-preservation-spherical}). The speed upper bound and the
complete curvature bound then follow (Lemma \ref{lem:normalized-C2-estimate-spherical}). Finally, a
maximum-principle argument gives exponential decay of the normalized gradient
(Section 6), and Evans--Krylov theory \cite{Krylov87}, Schauder estimates, and
interpolation yield smooth exponential convergence to a geodesic sphere
centred at $o$ (Section 7).

\subsection*{Notation}

We write $\Phi=f(\rho)\sigma_k^\alpha(h)$ and
$F=\sigma_k^\alpha$, the object of action depends on the context ($h$ or $\hat{h}$). The normalized radial function, Weingarten map,
speed, and support function are denoted by $\widetilde\rho$, $\hat h$,
$\widetilde\Phi$, and $\widetilde u$, respectively.  Constants $c,C$
may change from line to line and depend only on the fixed data of the
flow and the initial hypersurface.

\section{Preliminaries}
\subsection{Radial graphs in
\texorpdfstring{$\mathbb{S}^{n+1}_+$}{a hemisphere}}
We first record some basic geometric quantities of a radial graph in
$\mathbb S^{n+1}_+$; see, e.g., \cite{Ger06} for the standard
computations in space forms. Let
$X(x)=(\rho(x),x)$, $x\in\mathbb S^n$, where
$\rho:\mathbb S^n\to(0,\pi/2)$ is smooth. All derivatives and
contractions below are taken with respect to $\sigma_{\mathbb S^n}$.

The induced metric and its inverse are
\begin{align}
  g_{ij}
  &=\rho_i\rho_j+\sin^2\rho\,\sigma_{ij},
    \label{eq:induced-metric}\\
  g^{ij}
  &=\frac{1}{\sin^2\rho}
    \left(
      \sigma^{ij}
      -\frac{\rho^i\rho^j}{\sin^2\rho\,v^2}
     \right).
     \label{eq:inverse-induced-metric}
\end{align}
The outward unit normal is
\begin{equation}\label{eq:radial-unit-normal}
  \nu=\frac{1}{v}
  \left(
    \partial_\rho-\frac{\rho^i}{\sin^2\rho}\partial_i
  \right),
\end{equation}
where
\begin{equation}\label{eq:radial-graph-factor}
  v=\left(1+\frac{|\nabla\rho|^2}{\sin^2\rho}\right)^{1/2}.
\end{equation}
With the convention $h_{ij}=-\langle\vec h_{ij},\nu\rangle$, the second
fundamental form is
\begin{equation}\label{eq:second-fundamental-form-rho}
  h_{ij}=\frac{1}{v}
  \left(
    -\rho_{ij}
    +\sin\rho\cos\rho\,\sigma_{ij}
    +2\cot\rho\,\rho_i\rho_j
  \right).
\end{equation}
The Weingarten map $h_i{}^j=g^{jk}h_{ik}$ is
\begin{equation}\label{eq:weingarten-map-rho}
  h_i{}^j
  =\frac{1}{v\sin^2\rho}
  \left(
    -\rho_i{}^j
    +\sin\rho\cos\rho\,\delta_i{}^j
    +\frac{\rho^j\rho^k\rho_{ki}}
           {\sin^2\rho\,v^2}
    +\frac{\cot\rho}{v^2}\rho_i\rho^j
  \right).
\end{equation}
The spherical support function is
\begin{equation}\label{eq:radial-support-function}
  u=\langle\sin\rho\,\partial_\rho,\nu\rangle
   =\frac{\sin\rho}{v}.
\end{equation}
\subsection{The normalized flow}\label{subsec:normalized-flow}
We first explain why no separate star-shapedness hypothesis is needed.
A smooth, closed, strictly convex hypersurface in
$\mathbb S^{n+1}_+$ enclosing $o$ is automatically a radial graph over
$\mathbb S^n$ centred at $o$.  Indeed, for $n\geq2$ the
do~Carmo--Warner theorem~\cite{dCW} shows that such a hypersurface is
embedded and bounds a strictly convex body
$\Omega\Subset\mathbb S^{n+1}_+$.  For $n=1$ the same conclusion is
elementary once ``enclosing $o$'' is understood to include embeddedness,
namely, $M_0$ is the boundary of a domain in $\mathbb S^2_+$ containing
$o$.  If a geodesic ray from $o$ met $\partial\Omega$ in more than one
point, or tangentially, the totally geodesic supporting hyperplane at
the last intersection point would separate $o$ from $\Omega$, contrary
to $o\in\operatorname{int}\Omega$.  Thus $M_t$ is star-shaped with
respect to $o$ as long as it remains strictly convex and encloses $o$.
Hence, after a time-dependent tangential
reparametrization, $M_t$ can be written as
\begin{equation}\label{eq:time-dependent-radial-graph}
  X(\theta,t)=\exp_o\bigl(\rho(\theta,t)\theta\bigr),
  \qquad \theta\in\mathbb S^n.
\end{equation}
For this parametrization,
\begin{equation}\label{eq:radial-velocity-normal-component}
  \left\langle\partial_tX,\nu\right\rangle
  =\rho_t\left\langle\partial_\rho,\nu\right\rangle
  =\frac{\rho_t}{v}.
\end{equation}
The tangential reparametrization does not change the normal velocity.
Comparing \eqref{eq:radial-velocity-normal-component} with
\eqref{eq:unnormalized-flow}, the geometric flow is equivalent to the
scalar equation on $\mathbb S^n$
\begin{equation}\label{eq:radial-flow}
  \begin{cases}
    \displaystyle
    \partial_t\rho
    =-f(\rho)\sigma_k^\alpha\bigl(h[\rho]\bigr)v,
       &\text{on }\mathbb S^n\times[0,T),\\[1mm]
    \rho(\cdot,0)=\rho_0.
  \end{cases}
\end{equation}
Here $h[\rho]$ is the Weingarten map in
\eqref{eq:weingarten-map-rho}.  Equation \eqref{eq:radial-flow} is
parabolic on the admissible branch $h[\rho]\in\Gamma_k^+$.

Spherical space has no ambient homothety.  Consequently, the
normalization is defined by blowing up the radial function in normal
coordinates at $o$, rather than by multiplying the embedding $X$.
Define
\begin{equation}\label{eq:lambda-definition}
  \lambda(t)=
  \begin{cases}
    e^{\gamma_0 t},&\beta=1+k\alpha,\\[1mm]
    \bigl(1+(\beta-k\alpha-1)\gamma_0 t\bigr)^%
      {1/(\beta-k\alpha-1)},&\beta>1+k\alpha,
  \end{cases}
\end{equation}
where $\gamma_0 = \binom nk^\alpha$ so that
\begin{equation}\label{eq:lambda-ode}
  \lambda'=\gamma_0\lambda^{2+k\alpha-\beta}.
\end{equation}
Introduce the normalized time
\begin{equation}\label{eq:normalized-time}
  \tau=
  \begin{cases}
    t,&\beta=1+k\alpha,\\[1mm]
    \displaystyle
    \frac{\log\bigl(1+(\beta-k\alpha-1)\gamma_0 t\bigr)}
    {(\beta-k\alpha-1)\gamma_0},&\beta>1+k\alpha.
  \end{cases}
\end{equation}
Equivalently,
\begin{equation}\label{eq:time-and-lambda-identities}
  \frac{d\tau}{dt}=\lambda^{1+k\alpha-\beta},
  \qquad
  \lambda=e^{\gamma_0\tau}.
\end{equation}
The normalized radial function and the corresponding auxiliary radial graph are
\begin{equation}\label{eq:normalized-radius}
  \widetilde\rho(\theta,\tau)=\lambda(t(\tau))\rho(\theta,t(\tau)),
  \qquad
  \widetilde M_\tau
  =\bigl\{\exp_o(\widetilde\rho(\theta,\tau)\theta):
      \theta\in\mathbb S^n\bigr\}.
\end{equation}
In what follows, we use only the normalized radial function
$\widetilde\rho$ and do not consider the geometry of the auxiliary graph
$\widetilde M_\tau$, since the curvatures of $\widetilde M_\tau$ and
$M_{t(\tau)}$ are not related by scaling.  

Differentiating $\widetilde\rho=\lambda\rho$ with respect to $t$ and
using \eqref{eq:radial-flow}, we obtain
\begin{equation}\label{eq:t-derivative-before-curvature-scaling}
  \begin{aligned}
  \partial_t\widetilde\rho
  &=(\partial_t\lambda)\rho+\lambda\partial_t\rho\\
  &=\gamma_0\lambda^{2+k\alpha-\beta}\rho
    -\lambda f
      \left(\frac{\widetilde\rho}{\lambda}\right)
      \sigma_k^\alpha
      \left(h\left[\frac{\widetilde\rho}{\lambda}\right]\right)v\\
  &=\gamma_0\lambda^{1+k\alpha-\beta}\widetilde\rho
    -\lambda f
      \left(\frac{\widetilde\rho}{\lambda}\right)
       \sigma_k^\alpha
       \left(h\left[\frac{\widetilde\rho}{\lambda}\right]\right)
       \left(
         1+\frac{|\nabla\widetilde\rho|^2}
           {\lambda^2\sin^2(\widetilde\rho/\lambda)}
      \right)^{1/2}.
  \end{aligned}
\end{equation}
Combining \eqref{eq:radial-flow},
\eqref{eq:time-and-lambda-identities}, and \eqref{eq:t-derivative-before-curvature-scaling},
we obtain
\begin{equation}\label{eq:normalized-radial-flow-exact}
  \begin{aligned}
  \partial_\tau\widetilde\rho
  &=\frac{dt}{d\tau}\partial_t\widetilde\rho
   =\lambda^{\beta-1-k\alpha}\partial_t\widetilde\rho\\
  &=\gamma_0\widetilde\rho
   -\lambda^{\beta-k\alpha} f
       \left(\frac{\widetilde\rho}{\lambda}\right)
     \sigma_k^\alpha
       \left(h\left[\frac{\widetilde\rho}{\lambda}\right]\right)
     \left(
       1+\frac{|\nabla\widetilde\rho|^2}
         {\lambda^2\sin^2(\widetilde\rho/\lambda)}
    \right)^{1/2}.
  \end{aligned}
\end{equation}

Define
\begin{equation}\label{eq:normalized-weingarten-map-definition}
  \hat h_i{}^j:=\lambda^{-1}h_i{}^j.
\end{equation}
Thus $\hat h_i{}^j$ is the rescaled Weingarten map of
$M_{t(\tau)}$; it is not the Weingarten map of the auxiliary graph
$\widetilde M_\tau$.  By the homogeneity of $\sigma_k$,
\[
  \sigma_k^\alpha
  \left(h\left[\frac{\widetilde\rho}{\lambda}\right]\right)
  =\lambda^{k\alpha}\sigma_k^\alpha(\hat h).
\]
Consequently, the normalized radial function satisfies
\begin{equation}\label{eq:normalized-radial-flow-hat-h}
  \begin{aligned}
  \partial_\tau\widetilde\rho
  ={}&\gamma_0\widetilde\rho
   -\lambda^\beta f
      \left(\frac{\widetilde\rho}{\lambda}\right)
     \sigma_k^\alpha(\hat h)
     \left(
       1+\frac{|\nabla\widetilde\rho|^2}
         {\lambda^2\sin^2(\widetilde\rho/\lambda)}
     \right)^{1/2}.
  \end{aligned}
\end{equation}
\subsection{The evolution equations}
In this subsection, we record the basic evolution equations for the
normalized Weingarten map, the normalized speed, and the inverse
normalized Weingarten map. Since the auxiliary graph $\widetilde M_\tau$ is not a
geometric rescaling of the original hypersurface, all geometric
quantities below are computed on the unnormalized flow $M_{t(\tau)}$.

\begin{lemma}[Evolution of the radial distance]
\label{lem:unnormalized-radial-distance-evolution-spherical}
Let $\bar\rho$ be the ambient radial distance from the origin and set
\[
  \rho(p,t):=\bar\rho(X(p,t)).
\]
Along the unnormalized flow~\eqref{eq:unnormalized-flow}, the radial
distance satisfies
\begin{equation}\label{eq:unnormalized-radial-distance-evolution-spherical}
  \partial_t\rho=-\frac{\Phi}{v}.
\end{equation}
\end{lemma}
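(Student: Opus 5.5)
The plan is to apply the chain rule to $\rho(p,t)=\bar\rho(X(p,t))$ and then identify the normal component of the ambient gradient of $\bar\rho$ using the radial graph formulas of Subsection 2.1. Differentiating in $t$ at fixed $p$ and using \eqref{eq:unnormalized-flow} gives
\[
\partial_t\rho=\langle\bar\nabla\bar\rho,\partial_tX\rangle
=-f(\rho)\sigma_k^\alpha\,\langle\bar\nabla\bar\rho,\nu\rangle
=-\Phi\,\langle\partial_\rho,\nu\rangle .
\]
Here we used that in the polar metric \eqref{eq:metric} the gradient of the distance function is the unit radial field, $\bar\nabla\bar\rho=\partial_\rho$. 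This holds on the hemisphere minus $o$, and every $M_t$ avoids $o$ because it encloses $o$.

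Next we evaluate $\langle\partial_\rho,\nu\rangle$ from \eqref{eq:radial-unit-normal}. Since $\partial_\rho$ is $\bar g$-orthogonal to the coordinate fields $\partial_i$ on $\mathbb S^n$ and has unit length, we get $\langle\partial_\rho,\nu\rangle=1/v$. This is the same identity that appears in \eqref{eq:radial-velocity-normal-component} and in the support function formula \eqref{eq:radial-support-function}. Substituting it gives $\partial_t\rho=-\Phi/v$.

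The only point needing care is the parametrization. In this lemma $\rho$ is computed along the original parametrization $X(p,t)$ with purely normal velocity, not along the radial reparametrization \eqref{eq:time-dependent-radial-graph}. The chain rule above is taken at fixed $p$, so no tangential term enters, and the quantity $v$ is evaluated at the point $X(p,t)$. It is defined there geometrically as $1/\langle\partial_\rho,\nu\rangle$, which is positive by the star-shapedness discussed at the start of Subsection 2.2.

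A consistency check: the radial scalar equation \eqref{eq:radial-flow} has velocity $-\Phi v$ rather than $-\Phi/v$. The difference is exactly the tangential reparametrization term, since the radial velocity $\rho_t\partial_\rho$ has normal component $\rho_t/v$. No real obstacle is expected.
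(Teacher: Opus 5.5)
Your proof is correct and follows the paper's argument: the chain rule with $\bar\nabla\bar\rho=\partial_\rho$ and $\partial_tX=-\Phi\nu$, combined with $\langle\partial_\rho,\nu\rangle=v^{-1}$ read off from the radial-graph unit normal. Your remarks are also accurate: the derivative is taken at fixed $p$ in the normal parametrization, and the gap between $-\Phi/v$ here and $-\Phi v$ in the radial scalar equation comes from the tangential reparametrization, a point the paper leaves implicit.
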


\begin{proof}
Denote by $\bar\nabla$ the Levi--Civita connection of
$\mathbb S^{n+1}$. Then
$\bar\nabla\bar\rho=\partial_\rho$.

Since $\langle\partial_\rho,\nu\rangle=v^{-1}$, it follows
\[
  \partial_t\rho
  =\left\langle
      \overline\nabla\overline\rho,\partial_tX
    \right\rangle
  =-\Phi\langle\partial_\rho,\nu\rangle
  =-\frac{\Phi}{v}.
\]
\end{proof}

Define the normalized metric and speed by
\begin{equation}\label{eq:preliminary-normalized-metric-and-speed-spherical}
  \widetilde g:=\lambda^2g,
  \qquad
  \widetilde\Phi:=\lambda^{\beta-k\alpha}\Phi.
\end{equation}
The Levi--Civita connections of $g$ and $\widetilde g$ agree at each
fixed time because $\lambda$ is spatially constant. We write this connection as $\widetilde\nabla$.

\begin{lemma}[Evolution of the normalized Weingarten map]
\label{lem:basic-normalized-weingarten-evolution-spherical}
The normalized Weingarten map
$\hat h_i{}^j=\lambda^{-1}h_i{}^j$ satisfies
\begin{equation}\label{eq:basic-normalized-weingarten-evolution-spherical}
  \partial_\tau\hat h_i{}^j
  =\widetilde\nabla^j\widetilde\nabla_i\widetilde\Phi
   +\widetilde\Phi\hat h_i{}^l\hat h_l{}^j
   +\lambda^{-2}\widetilde\Phi\delta_i{}^j
   -\gamma_0\hat h_i{}^j.
\end{equation}
\end{lemma}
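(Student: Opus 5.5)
The plan is to start from the standard evolution of the Weingarten map under a normal flow $\partial_tX=-\Phi\nu$ in a space form of sectional curvature $1$, and then rescale in time and magnitude. With the convention $h_{ij}=-\langle\vec h_{ij},\nu\rangle$ and outward normal, the unnormalized flow gives
\[
\partial_t h_i{}^j=\nabla^j\nabla_i\Phi+\Phi\,h_i{}^lh_l{}^j+\Phi\,\delta_i{}^j .
\]
This identity comes from three facts. The metric evolves by $\partial_tg_{ij}=-2\Phi h_{ij}$. The normal evolves by $\partial_t\nu=\nabla\Phi$. The second fundamental form evolves by $\partial_th_{ij}=\nabla_i\nabla_j\Phi-\Phi h_i{}^lh_{lj}+\Phi\bar R(\nu,\partial_i,\nu,\partial_j)$, where the ambient curvature term equals $\Phi g_{ij}$ in $\mathbb S^{n+1}$. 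Raising an index with $g^{jk}$, and using $\partial_tg^{jk}=2\Phi h^{jk}$, turns the $-\Phi h^2$ term into $+\Phi h^2$. I would double-check the sign conventions against the radial-graph formulas of Subsection~2.1 by testing on a geodesic sphere $\rho=r(t)$. There $h_i{}^j=\cot r\,\delta_i{}^j$ and $r'=-\Phi$, so $\partial_t\cot r=\Phi\csc^2 r=\Phi(\cot^2r+1)$, which matches the formula.

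Next I change variables. By \eqref{eq:time-and-lambda-identities}, $\partial_\tau=\lambda^{\beta-1-k\alpha}\partial_t$ and $\partial_t\lambda=\gamma_0\lambda^{2+k\alpha-\beta}$. Applying these to $\hat h_i{}^j=\lambda^{-1}h_i{}^j$ gives
\[
\partial_\tau\hat h_i{}^j=\lambda^{\beta-2-k\alpha}\partial_t h_i{}^j-\lambda^{\beta-1-k\alpha}\lambda^{-2}(\partial_t\lambda)h_i{}^j
=\lambda^{\beta-2-k\alpha}\partial_t h_i{}^j-\gamma_0\hat h_i{}^j .
\]
The last term is exactly $-\gamma_0\hat h_i{}^j$.

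It remains to express each of the three terms of $\partial_th_i{}^j$ in normalized quantities, using $\Phi=\lambda^{k\alpha-\beta}\widetilde\Phi$ and $g^{jk}=\lambda^2\widetilde g^{jk}$.
\begin{itemize}
\item The Hessian term: $\nabla^j\nabla_i\Phi=g^{jk}\widetilde\nabla_k\widetilde\nabla_i\Phi=\lambda^{2+k\alpha-\beta}\widetilde\nabla^j\widetilde\nabla_i\widetilde\Phi$. Here indices on the right are raised with $\widetilde g$, and the connections agree because $\lambda$ is spatially constant. Multiplying by $\lambda^{\beta-2-k\alpha}$ gives $\widetilde\nabla^j\widetilde\nabla_i\widetilde\Phi$.
\item The quadratic term: $\lambda^{\beta-2-k\alpha}\Phi h_i{}^lh_l{}^j=\lambda^{\beta-2-k\alpha}\lambda^{k\alpha-\beta}\widetilde\Phi\lambda^2\hat h_i{}^l\hat h_l{}^j=\widetilde\Phi\hat h_i{}^l\hat h_l{}^j$.
\item The curvature term: $\lambda^{\beta-2-k\alpha}\Phi\delta_i{}^j=\lambda^{-2}\widetilde\Phi\delta_i{}^j$.
\end{itemize}
Summing these with $-\gamma_0\hat h_i{}^j$ yields \eqref{eq:basic-normalized-weingarten-evolution-spherical}.

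The only real obstacle is bookkeeping of signs and conventions in the unnormalized evolution. This concerns the sign of the ambient curvature contribution and the raising of indices with the time-dependent metric. A second subtlety is the tangential reparametrization used for the radial graph: it adds a Lie-derivative term to $\partial_t h_i{}^j$. I would therefore state the identity for the normal parametrization, where $\partial_\tau$ is taken along normal trajectories. Alternatively, one can note that at points where it matters, namely the maximum-principle applications, the tangential term is harmless.
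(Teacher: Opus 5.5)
Your proposal is correct and follows essentially the same route as the paper. You start from the unnormalized identity $\partial_t h_i{}^j=\nabla^j\nabla_i\Phi+\Phi h_i{}^lh_l{}^j+\Phi\delta_i{}^j$ and convert each term via $\partial_\tau=\lambda^{\beta-1-k\alpha}\partial_t$, $\lambda'/\lambda=\gamma_0\lambda^{1+k\alpha-\beta}$, $\Phi=\lambda^{k\alpha-\beta}\widetilde\Phi$ and $g^{jk}=\lambda^2\widetilde g^{jk}$. Your derivation of the unnormalized formula, the geodesic-sphere sign check, and the remark that the identity is to be read along the normal parametrization are sound additions that the paper leaves implicit.
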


\begin{proof}
Along the unnormalized flow $\partial_tX=-\Phi\nu$ in
$\mathbb S^{n+1}$,
\begin{equation}\label{eq:preliminary-unnormalized-h-evolution-spherical}
  \partial_t h_i{}^j
  =\nabla^j\nabla_i\Phi
   +\Phi h_i{}^l h_l{}^j
   +\Phi\delta_i{}^j.
\end{equation}
Consequently,
\[
  \partial_t\hat h_i{}^j
  =-\frac{\lambda'}{\lambda}\hat h_i{}^j
   +\lambda^{-1}\nabla^j\nabla_i\Phi
   +\lambda^{-1}\Phi h_i{}^l h_l{}^j
   +\lambda^{-1}\Phi\delta_i{}^j.
\]
Using
\[
  h_i{}^j=\lambda\hat h_i{}^j,
  \qquad
  \Phi=\lambda^{k\alpha-\beta}\widetilde\Phi,
  \qquad
  \nabla^j\nabla_i\Phi
  =\lambda^{2+k\alpha-\beta}
   \widetilde\nabla^j\widetilde\nabla_i\widetilde\Phi,
\]
together with
\[
  \frac{\lambda'}{\lambda}
  =\gamma_0\lambda^{1+k\alpha-\beta},
  \qquad
  \frac{dt}{d\tau}=\lambda^{\beta-1-k\alpha},
\]
proves the assertion.
\end{proof}

For the speed evolution, put
\begin{equation}\label{eq:preliminary-speed-notation-spherical}
  F:=\sigma_k^\alpha(\hat h),
  \qquad
  \widetilde\Phi=\lambda^\beta f(\rho)F,
\end{equation}
and denote
\begin{equation}\label{eq:derivatives-of-sigma-k}
  \dot\sigma_k^{pq}
  :=\widetilde g^{pl}
    \frac{\partial\sigma_k}{\partial A_q{}^l}(\hat{h}),\quad
  \ddot\sigma_k^{pq,rs}
  :=\widetilde g^{pl}\widetilde g^{rm}
    \frac{\partial^2\sigma_k}
    {\partial A_q{}^l\partial A_s{}^m}(\hat{h}).
\end{equation}
Accordingly,
\[
  \dot F^{pq}
  =\alpha\sigma_k^{\alpha-1}\dot\sigma_k^{pq}.
\]
Define
\begin{equation}\label{eq:preliminary-linearized-operator-spherical}
  \mathcal L
  :=\partial_\tau
   -\lambda^\beta f(\rho)\dot F^{ij}
    \widetilde\nabla_i\widetilde\nabla_j.
\end{equation}

\begin{lemma}[Evolution of the normalized speed]
\label{lem:basic-normalized-speed-evolution-spherical}
The normalized speed satisfies
\begin{equation}\label{eq:basic-normalized-speed-evolution-spherical}
  \begin{aligned}
  \mathcal L\widetilde\Phi
  ={}&(\beta-k\alpha)\gamma_0\widetilde\Phi
     -\frac{f'(\rho)}{\lambda f(\rho)v}\widetilde\Phi^2+\lambda^\beta f(\rho)\widetilde\Phi\dot F^{ij}
      (\hat h^2)_{ij}
     +\lambda^{\beta-2}f(\rho)\widetilde\Phi
      \dot F^{ij}\widetilde g_{ij}.
  \end{aligned}
\end{equation}
\end{lemma}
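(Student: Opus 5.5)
The plan is to differentiate $\widetilde\Phi=\lambda^\beta f(\rho)F$, with $F=\sigma_k^\alpha(\hat h)$, directly in $\tau$ along the normal parametrization of the unnormalized flow, with $p$ fixed. Here $\rho(p,t)=\bar\rho(X(p,t))$ as in Lemma~\ref{lem:unnormalized-radial-distance-evolution-spherical}. The product rule splits $\partial_\tau\widetilde\Phi$ into three contributions: the factor $\lambda^\beta$, the factor $f(\rho)$, and the factor $F$. I would compute each separately and then recombine them, with no maximum-principle input.

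\emph{The factor $\lambda^\beta$.} By \eqref{eq:time-and-lambda-identities} we have $\lambda=e^{\gamma_0\tau}$, so $\partial_\tau\lambda^\beta=\beta\gamma_0\lambda^\beta$. This contributes $\beta\gamma_0\widetilde\Phi$.

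\emph{The factor $f(\rho)$.} By Lemma~\ref{lem:unnormalized-radial-distance-evolution-spherical} and $dt/d\tau=\lambda^{\beta-1-k\alpha}$,
\[
\partial_\tau\rho=-\lambda^{\beta-1-k\alpha}\frac{\Phi}{v}=-\frac{\widetilde\Phi}{\lambda v},
\]
using $\Phi=\lambda^{k\alpha-\beta}\widetilde\Phi$. Hence
\[
\lambda^\beta F\,\partial_\tau f(\rho)=-\frac{f'(\rho)}{\lambda v}\,\lambda^\beta F\,\widetilde\Phi=-\frac{f'(\rho)}{\lambda f(\rho) v}\widetilde\Phi^2,
\]
since $\lambda^\beta F=\widetilde\Phi/f(\rho)$.

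\emph{The factor $F$.} Here I would use the chain rule $\partial_\tau F=\dot F^{ij}\widetilde g_{jl}\,\partial_\tau\hat h_i{}^l$, with $\dot F$ as in \eqref{eq:derivatives-of-sigma-k}, and insert Lemma~\ref{lem:basic-normalized-weingarten-evolution-spherical}. This produces four terms:
\begin{itemize}
\item $\dot F^{ij}\widetilde\nabla_i\widetilde\nabla_j\widetilde\Phi$;
\item $\widetilde\Phi\,\dot F^{ij}(\hat h^2)_{ij}$;
\item $\lambda^{-2}\widetilde\Phi\,\dot F^{ij}\widetilde g_{ij}$;
\item $-\gamma_0\dot F^{ij}\hat h_{ij}$.
\end{itemize}
For the last term, $F$ is homogeneous of degree $k\alpha$, so Euler's identity gives $\dot F^{ij}\hat h_{ij}=k\alpha F$, and the term equals $-k\alpha\gamma_0F$. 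Multiplying everything by $\lambda^\beta f(\rho)$, the second-order term becomes $\lambda^\beta f\dot F^{ij}\widetilde\nabla_i\widetilde\nabla_j\widetilde\Phi$. Moving it to the left-hand side produces the operator $\mathcal L$. The remaining terms become
\[
\lambda^\beta f\widetilde\Phi\dot F^{ij}(\hat h^2)_{ij},\qquad
\lambda^{\beta-2}f\widetilde\Phi\dot F^{ij}\widetilde g_{ij},\qquad
-k\alpha\gamma_0\widetilde\Phi.
\]
Adding $\beta\gamma_0\widetilde\Phi$ to the last of these gives the coefficient $(\beta-k\alpha)\gamma_0$, and the identity \eqref{eq:basic-normalized-speed-evolution-spherical} follows.

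The main point requiring care is bookkeeping rather than an estimate. First, the $\tau$-derivative must be taken in the normal parametrization, since that is where Lemma~\ref{lem:basic-normalized-weingarten-evolution-spherical} holds, and not in the radial parametrization \eqref{eq:time-dependent-radial-graph}. Second, the index raising in $\dot F^{ij}$ and $(\hat h^2)_{ij}$ must consistently use $\widetilde g$, so that the contractions with the mixed tensor $\partial_\tau\hat h_i{}^j$ carry the right powers of $\lambda$. Because $\widetilde g=\lambda^2 g$ and the connections coincide, this should reduce to checking that each power of $\lambda$ matches, which I expect to go through directly.
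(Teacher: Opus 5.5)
Your argument is correct and is essentially the paper's computation, organized slightly differently. The paper differentiates $\widetilde\Phi=\lambda^{\beta-k\alpha}\Phi$ using the unnormalized evolution of $h_i{}^j$ and then rescales each term. You instead insert the already-normalized evolution of Lemma~\ref{lem:basic-normalized-weingarten-evolution-spherical} into the product rule for $\lambda^\beta f(\rho)F$, so $(\beta-k\alpha)\gamma_0$ arises as $\beta\gamma_0-k\alpha\gamma_0$ via Euler's identity, and your powers of $\lambda$ all check out, including $\partial_\tau\rho=-\widetilde\Phi/(\lambda v)$ and $\lambda=e^{\gamma_0\tau}$ in both regimes.
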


\begin{proof}
We first calculate the evolution of $\Phi$ along~\eqref{eq:unnormalized-flow}.

By~\eqref{eq:unnormalized-radial-distance-evolution-spherical} and
\eqref{eq:preliminary-unnormalized-h-evolution-spherical},
\begin{equation}\label{eq:unnormalized-speed-evolution-spherical}
  \begin{aligned}
  \partial_t\Phi
  & =f'(\rho)\rho_t\sigma_k^\alpha(h)
     +f(\rho)
      \frac{\partial\sigma_k^\alpha}{\partial A_i{}^j}(h)
      \partial_t h_i{}^j\\
  &=-\frac{f'(\rho)}{f(\rho)v}\Phi^2
     +f(\rho)
      \frac{\partial\sigma_k^\alpha}{\partial A_i{}^j}(h)
      \left(
        \nabla^j\nabla_i\Phi
        +\Phi h_i{}^l h_l{}^j
        +\Phi\delta_i{}^j
      \right),
  \end{aligned}
\end{equation}
Recall $\widetilde\Phi=\lambda^{\beta-k\alpha}\Phi$.
Using \eqref{eq:time-and-lambda-identities}, we therefore obtain
\begin{equation}\label{eq:normalized-speed-chain-rule-spherical}
  \partial_\tau\widetilde\Phi
  =(\beta-k\alpha)\gamma_0\widetilde\Phi
   +\lambda^{2\beta-1-2k\alpha}\partial_t\Phi.
\end{equation}

Since
\[
  h_i{}^j=\lambda\hat h_i{}^j,
  \qquad
  \Phi=\lambda^{k\alpha-\beta}\widetilde\Phi,
  \qquad
  \nabla^j\nabla_i\Phi
  =\lambda^{2+k\alpha-\beta}
    \widetilde\nabla^j\widetilde\nabla_i\widetilde\Phi,
\]
and
\[
  \frac{\partial\sigma_k^\alpha}{\partial A_i{}^j}(h)
  =\lambda^{k\alpha-1}
   \frac{\partial\sigma_k^\alpha}{\partial A_i{}^j}(\hat h),
\]
equation \eqref{eq:unnormalized-speed-evolution-spherical} becomes
\begin{equation}\label{eq:scaled-unnormalized-speed-evolution-spherical}
  \begin{aligned}
  \lambda^{2\beta-1-2k\alpha}\partial_t\Phi
  ={}&\lambda^\beta f(\rho)\dot F^{ij}
      \widetilde\nabla_i\widetilde\nabla_j\widetilde\Phi
     -\frac{f'(\rho)}{\lambda f(\rho)v}\widetilde\Phi^2\\
   &+\lambda^\beta f(\rho)\widetilde\Phi\dot F^{ij}
      (\hat h^2)_{ij}
     +\lambda^{\beta-2}f(\rho)\widetilde\Phi
      \dot F^{ij}\widetilde g_{ij}.
  \end{aligned}
\end{equation}
Substituting
\eqref{eq:scaled-unnormalized-speed-evolution-spherical} into
\eqref{eq:normalized-speed-chain-rule-spherical} proves
\eqref{eq:basic-normalized-speed-evolution-spherical}.
\end{proof}

\begin{lemma}[Evolution of the normalized support function]
\label{lem:basic-normalized-support-evolution-spherical}
For
\begin{equation}\label{eq:preliminary-normalized-support-spherical}
  \widetilde u:=\lambda u
  =\frac{\lambda\sin\rho}{v},
\end{equation}
we have
\begin{equation}\label{eq:basic-normalized-support-evolution-spherical}
  \begin{aligned}
  \mathcal L\widetilde u
  ={}&\gamma_0\widetilde u
   -(1+k\alpha)\cos\rho\,\widetilde\Phi
   +\lambda^\beta f(\rho)\widetilde u\dot F^{ij}
      (\hat h^2)_{ij}\\
  &+\lambda^\beta f'(\rho)F\sin\rho\,(1-v^{-2}).
  \end{aligned}
\end{equation}
\end{lemma}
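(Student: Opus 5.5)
The plan is to derive the evolution of the unnormalized support function $u$ along \eqref{eq:unnormalized-flow} in geometric form, and then rescale exactly as in the proofs of Lemmas \ref{lem:basic-normalized-weingarten-evolution-spherical} and \ref{lem:basic-normalized-speed-evolution-spherical}.

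The basic tool is the conformal Killing field $V:=\sin\rho\,\partial_\rho=\bar\nabla(-\cos\bar\rho)$. It satisfies $\bar\nabla_YV=\cos\rho\,Y$, and $u=\langle V,\nu\rangle$.

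\emph{Time derivative.} Along $\partial_tX=-\Phi\nu$ we have $\partial_t\nu=\nabla\Phi$. Hence
\[
\partial_tu=\langle\bar\nabla_{\partial_tX}V,\nu\rangle+\langle V,\partial_t\nu\rangle
=-\Phi\cos\rho+\langle V,e_k\rangle\nabla^k\Phi .
\]

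\emph{Spatial derivatives.} The standard space-form identities are
\[
\nabla_iu=h_i{}^k\langle V,e_k\rangle,\qquad
\nabla_i\nabla_ju=\langle V,e_k\rangle\nabla^kh_{ij}+\cos\rho\,h_{ij}-u\,(h^2)_{ij}.
\]
Here Codazzi is used, which holds since $\mathbb S^{n+1}$ has constant curvature.

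\emph{Expanding the gradient of the speed.} Write
\[
\nabla_k\Phi=f'(\rho)\sigma_k^\alpha\nabla_k\rho+f\,\dot F^{ij}\nabla_kh_{ij},
\qquad \nabla_k\rho=\frac{\langle V,e_k\rangle}{\sin\rho}.
\]
The third-derivative terms $f\dot F^{ij}\langle V,e_k\rangle\nabla^kh_{ij}$ then cancel against the corresponding term in $f\dot F^{ij}\nabla_i\nabla_ju$. What remains is $f'\sigma_k^\alpha|V^\top|^2/\sin\rho$. Since $|V^\top|^2=\sin^2\rho-u^2=\sin^2\rho\,(1-v^{-2})$ by \eqref{eq:radial-support-function}, this remainder equals $f'\sigma_k^\alpha\sin\rho\,(1-v^{-2})$.

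\emph{Zeroth-order terms.} The term $f\dot F^{ij}\cos\rho\,h_{ij}$ equals $k\alpha\cos\rho\,\Phi$ by the degree-$k\alpha$ homogeneity of $F=\sigma_k^\alpha$. Collecting everything gives
\[
\partial_tu-f\dot F^{ij}\nabla_i\nabla_ju=-(1+k\alpha)\cos\rho\,\Phi+u\,f\dot F^{ij}(h^2)_{ij}+f'\sigma_k^\alpha(h)\sin\rho\,(1-v^{-2}).
\]

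\emph{Rescaling.} Set $\widetilde u=\lambda u$ and use \eqref{eq:time-and-lambda-identities}. This gives
\[
\partial_\tau\widetilde u=\lambda^{\beta-1-k\alpha}(\lambda'u+\lambda\partial_tu)=\gamma_0\widetilde u+\lambda^{\beta-k\alpha}\partial_tu .
\]
Each term is then converted as in the proof of Lemma \ref{lem:basic-normalized-speed-evolution-spherical}, using $h=\lambda\hat h$, $\widetilde g=\lambda^2g$, and $\frac{\partial\sigma_k^\alpha}{\partial A}(h)=\lambda^{k\alpha-1}\frac{\partial\sigma_k^\alpha}{\partial A}(\hat h)$:
\begin{itemize}
\item $\lambda^{\beta-k\alpha}f\dot F(h)^{ij}\nabla_i\nabla_ju$ becomes $\lambda^\beta f\dot F^{ij}\widetilde\nabla_i\widetilde\nabla_j\widetilde u$;
\item $\lambda^{\beta-k\alpha}\Phi$ becomes $\widetilde\Phi$;
\item $\lambda^{\beta-k\alpha}u\,f\dot F(h)(h^2)$ becomes $\lambda^\beta f\,\widetilde u\,\dot F^{ij}(\hat h^2)_{ij}$;
\item $\lambda^{\beta-k\alpha}f'\sigma_k^\alpha(h)\sin\rho$ becomes $\lambda^\beta f'F\sin\rho$.
\end{itemize}
These yield exactly \eqref{eq:basic-normalized-support-evolution-spherical}.

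The step needing most care is the bookkeeping of the powers of $\lambda$ when index positions change between $g$ and $\widetilde g$. The sign conventions also need checking: outward $\nu$ together with $h_{ij}=-\langle\vec h_{ij},\nu\rangle$ must give $\nabla_i\nabla_ju$ with the $+\cos\rho\,h_{ij}$ and $-u(h^2)_{ij}$ signs used above. I would verify this on a geodesic sphere $\rho\equiv r$, where $\nabla u=0$, $h_{ij}=\cot r\,g_{ij}$, $u=\sin r$, and $\cos r\cot r-\sin r\cot^2r=0$ as required.
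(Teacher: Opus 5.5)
Your proposal is correct and follows essentially the same route as the paper. Both arguments use the conformal Killing field $V=\sin\rho\,\partial_\rho$ with $\overline\nabla V=\cos\rho\,\mathrm{Id}$, and the three identities
\[
\nabla_iu=h_i{}^pV_p,\qquad
\nabla_j\nabla_iu=V^p\nabla_ph_{ij}+\cos\rho\,h_{ij}-u(h^2)_{ij},\qquad
\partial_tu=-\Phi\cos\rho+V^p\nabla_p\Phi .
\]
Both then cancel the third-order terms, evaluate $V^p\nabla_p\rho=\sin\rho\,(1-v^{-2})$, and rescale using $\dot F(h)^{ij}=\lambda^{1+k\alpha}\dot F^{ij}$ and $(h^2)_{ij}=(\hat h^2)_{ij}$. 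The only difference is cosmetic: you collect the unnormalized equation before rescaling, whereas the paper rescales first and applies Euler's identity at the level of $\hat h$.
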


\begin{proof}
Recall
\[
  u=\langle V,\nu\rangle,
  \qquad
  V=\sin\rho\,\partial_\rho,
  \qquad
  \overline\nabla V=\cos\rho\,\mathrm{Id}.
\]
Define
\begin{equation}\label{eq:tangential-components-of-V-spherical}
  V_p:=\overline g(V,X_p),
  \qquad
  V^p:=g^{pq}V_q.
\end{equation}
Therefore,
\begin{equation}\label{eq:first-derivative-support-spherical}
  \begin{aligned}
  \nabla_i u
  &=\left\langle\overline\nabla_{X_i}V,\nu\right\rangle
    +\left\langle V,\overline\nabla_{X_i}\nu\right\rangle\\
  &=h_i{}^pV_p,
  \end{aligned}
\end{equation}
and
\[
  \nabla_j\nabla_i u
  =V^p\nabla_p h_{ij}
   +\cos\rho\,h_{ij}
   -u(h^2)_{ij}.
\]
We next calculate the time derivative. Along the unnormalized flow~\eqref{eq:unnormalized-flow}, the unit normal satisfies
$\partial_t\nu=\nabla\Phi$. Therefore,
\begin{equation}\label{eq:unnormalized-support-time-derivative-spherical}
  \begin{aligned}
  \partial_tu
  &=\left\langle
      \overline\nabla_{\partial_tX}V,\nu
    \right\rangle
    +\left\langle V,\partial_t\nu\right\rangle\\
  &=-\Phi\cos\rho+V^p\nabla_p\Phi.
  \end{aligned}
\end{equation}
By
$$\nabla_p\Phi = f'(\rho)\n_p\rho \sigma_k^\alpha(h) + \alpha f(\rho)\sigma_k^{\alpha-1}\frac{\p\sigma_k}{\p A_i{ }^j}(h)\n_ph_i{ }^j,$$
we have
\begin{equation}\label{eq:unnormalized-support-evolution-expanded-spherical}
  \begin{aligned}
  \partial_tu
  ={}&\alpha f(\rho)\sigma_k^{\alpha-1}
      \frac{\partial\sigma_k}{\partial A_i{}^l}(h)
      g^{lj}\nabla_j\nabla_i u\\
   &-\alpha f(\rho)\sigma_k^{\alpha-1}
      \frac{\partial\sigma_k}{\partial A_i{}^l}(h)
      g^{lj}\cos\rho\,h_{ij}\\
   &+\alpha f(\rho)\sigma_k^{\alpha-1}
      \frac{\partial\sigma_k}{\partial A_i{}^l}(h)
      g^{lj}u(h^2)_{ij}
      -\Phi\cos\rho\\
   &+f'(\rho)\sigma_k^\alpha(h)
  \sin\rho\,(1-v^{-2})
  \end{aligned}
\end{equation}
where we used $V^p\nabla_p\rho=\sin\rho\,(1-v^{-2})$.

Since
\begin{equation*}\label{eq:support-normalization-relations-spherical}
  \begin{gathered}
  \widetilde\nabla=\nabla,
  \quad
  \widetilde u=\lambda u,
  \quad
  \hat h_{ij}=\lambda h_{ij},
  \quad
  (\hat h^2)_{ij}=(h^2)_{ij},\\
  \Phi=\lambda^{k\alpha-\beta}\widetilde\Phi,\quad
  \alpha\sigma_k^{\alpha-1}(h)
  \frac{\partial\sigma_k}{\partial A_i{}^l}(h)g^{lj}
  =\lambda^{1+k\alpha}\dot F^{ij}.
  \end{gathered}
\end{equation*}
It follows from
\eqref{eq:unnormalized-support-evolution-expanded-spherical} that
\begin{equation}\label{eq:normalized-support-time-derivative-spherical}
  \begin{aligned}
  \partial_\tau\widetilde u
  &=\lambda_\tau u
    +\lambda\frac{dt}{d\tau}\partial_tu\\
  ={}&\gamma_0\widetilde u
    +\lambda^\beta f(\rho)\dot F^{ij}
      \widetilde\nabla_j\widetilde\nabla_i\widetilde u
    -\lambda^\beta f(\rho)\cos\rho\,
      \dot F^{ij}\hat h_{ij}\\
   &+\lambda^\beta f(\rho)\widetilde u\dot F^{ij}
      (\hat h^2)_{ij}
    -\widetilde\Phi\cos\rho
    +\lambda^\beta f'(\rho)F\sin\rho\,(1-v^{-2})\\
  ={}&\lambda^\beta f(\rho)\dot F^{ij}
      \widetilde\nabla_i\widetilde\nabla_j\widetilde u
    +\gamma_0\widetilde u
    -(1+k\alpha)\cos\rho\,\widetilde\Phi\\
   &+\lambda^\beta f(\rho)\widetilde u\dot F^{ij}
      (\hat h^2)_{ij}
    +\lambda^\beta f'(\rho)F\sin\rho\,(1-v^{-2}).
  \end{aligned}
\end{equation}
Moving the Hessian term to the left-hand side proves
\eqref{eq:basic-normalized-support-evolution-spherical}.
\end{proof}
Applying Simons' identity to
commute the second covariant derivatives of the second fundamental
form, we obtain refined evolution equations for the normalized
Weingarten map and its inverse.
\begin{lemma}
\begin{equation}\label{eq:full-normalized-weingarten-evolution}
  \begin{aligned}
  \mathcal L\hat h_i{}^j
  ={}&
  \lambda^\beta f(\rho)
  \alpha\sigma_k^{\alpha-1}\ddot\sigma_k^{pq,rs}
  \widetilde\nabla^j\hat h_{pq}
  \widetilde\nabla_i\hat h_{rs}
  \\
  &+
  \lambda^\beta f(\rho)
  \alpha(\alpha-1)\sigma_k^{\alpha-2}
  \dot\sigma_k^{pq}\dot\sigma_k^{rs}
  \widetilde\nabla^j\hat h_{pq}
  \widetilde\nabla_i\hat h_{rs}
  \\
  &+
  \lambda^\beta f'(\rho)
  \alpha\sigma_k^{\alpha-1}\dot\sigma_k^{pq}
  \Bigl(
    \widetilde\nabla^j\rho\,
    \widetilde\nabla_i\hat h_{pq}
    +\widetilde\nabla_i\rho\,
    \widetilde\nabla^j\hat h_{pq}
  \Bigr)
  \\
  &+
  \lambda^\beta f'(\rho)
  \sigma_k^\alpha
  \widetilde\nabla^j\widetilde\nabla_i\rho+
  \lambda^\beta f''(\rho)
  \sigma_k^\alpha
  \widetilde\nabla^j\rho\,
  \widetilde\nabla_i\rho
  \\
  &+
  \lambda^\beta f(\rho)
  \alpha\sigma_k^{\alpha-1}
  \left(
    \dot\sigma_k^{pq}(\hat h^2)_{pq}
    -\lambda^{-2}\dot\sigma_k^{pq}\widetilde g_{pq}
  \right)\hat h_i{}^j
  \\
  &+
  \lambda^\beta f(\rho)
  (1-k\alpha)\sigma_k^\alpha(\hat h^2)_i{}^j
  +
  \lambda^{\beta-2} f(\rho)
  (1+k\alpha)\sigma_k^\alpha\delta_i{}^j
  -\gamma_0\hat h_i{}^j.
  \end{aligned}
\end{equation}
\end{lemma}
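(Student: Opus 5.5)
The plan is to start from Lemma~\ref{lem:basic-normalized-weingarten-evolution-spherical},
\[
  \partial_\tau\hat h_i{}^j
  =\widetilde\nabla^j\widetilde\nabla_i\widetilde\Phi
   +\widetilde\Phi(\hat h^2)_i{}^j
   +\lambda^{-2}\widetilde\Phi\delta_i{}^j
   -\gamma_0\hat h_i{}^j ,
\]
with $\widetilde\Phi=\lambda^\beta f(\rho)\sigma_k^\alpha(\hat h)$, and to expand the Hessian term. Because $\lambda$ is spatially constant, the product and chain rules give
\[
\widetilde\nabla^j\widetilde\nabla_i\widetilde\Phi
=\lambda^\beta\Bigl(f\,\widetilde\nabla^j\widetilde\nabla_i F
+f'\bigl(\widetilde\nabla^j\rho\,\widetilde\nabla_i F+\widetilde\nabla_i\rho\,\widetilde\nabla^j F\bigr)
+f' F\,\widetilde\nabla^j\widetilde\nabla_i\rho
+f'' F\,\widetilde\nabla^j\rho\,\widetilde\nabla_i\rho\Bigr).
\]
Writing $\widetilde\nabla F=\alpha\sigma_k^{\alpha-1}\dot\sigma_k^{pq}\widetilde\nabla\hat h_{pq}$ reproduces the third and fourth lines of \eqref{eq:full-normalized-weingarten-evolution} at once. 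Differentiating once more, $f\,\widetilde\nabla^j\widetilde\nabla_iF$ splits into three pieces:
\[
\alpha\sigma_k^{\alpha-1}\ddot\sigma_k^{pq,rs}\widetilde\nabla^j\hat h_{pq}\widetilde\nabla_i\hat h_{rs},\qquad
\alpha(\alpha-1)\sigma_k^{\alpha-2}\dot\sigma_k^{pq}\dot\sigma_k^{rs}\widetilde\nabla^j\hat h_{pq}\widetilde\nabla_i\hat h_{rs},\qquad
\dot F^{pq}\widetilde\nabla^j\widetilde\nabla_i\hat h_{pq}.
\]
The first two are the first two lines of the claimed identity.

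The core step is to commute derivatives in the last piece, $\dot F^{pq}\widetilde\nabla_j\widetilde\nabla_i\hat h_{pq}$, so as to produce $\dot F^{pq}\widetilde\nabla_p\widetilde\nabla_q\hat h_{ij}$, which is absorbed into $\mathcal L$. Since $\widetilde\nabla=\nabla$ and $\hat h_{ij}=\lambda h_{ij}$, $\widetilde g=\lambda^2 g$, I will do the commutation on $M_{t}$ in the unnormalized quantities and rescale afterwards. Codazzi holds in $\mathbb S^{n+1}$, so $\nabla_j\nabla_i h_{pq}=\nabla_j\nabla_p h_{iq}$. The Ricci identity together with Gauss, $R_{ijkl}=(g_{ik}g_{jl}-g_{il}g_{jk})+h_{ik}h_{jl}-h_{il}h_{jk}$, then gives Simons' identity in space form curvature $1$:
\[
\nabla_i\nabla_j h_{pq}=\nabla_p\nabla_q h_{ij}+h_{ij}(h^2)_{pq}-h_{pq}(h^2)_{ij}+h_{iq}(h^2)_{jp}-h_{jp}(h^2)_{iq}
+\bigl(g_{ij}h_{pq}-g_{pq}h_{ij}+g_{iq}h_{jp}-g_{jp}h_{iq}\bigr)\ \text{(up to sign conventions)}.
\]
I will contract this with $\dot F^{pq}$. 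The terms $h_{iq}(h^2)_{jp}-h_{jp}(h^2)_{iq}$ and $g_{iq}h_{jp}-g_{jp}h_{iq}$ cancel after contraction, because $\dot F$ commutes with $h$ (work in a frame diagonalizing $\hat h$). The survivors are $\dot F^{pq}(h^2)_{pq}h_{ij}$, $-\dot F^{pq}h_{pq}(h^2)_{ij}$, $\dot F^{pq}h_{pq}g_{ij}$ and $-\dot F^{pq}g_{pq}h_{ij}$. Euler's relation gives $\dot F^{pq}\hat h_{pq}=k\alpha\,\sigma_k^\alpha$.

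The last step is to collect terms. The pieces $\dot F(\hat h^2)\hat h_i{}^j$ and $-\lambda^{-2}\dot F^{pq}\widetilde g_{pq}\hat h_i{}^j$ form line five. The reaction term $-k\alpha\sigma_k^\alpha(\hat h^2)_i{}^j$ combines with $\widetilde\Phi(\hat h^2)_i{}^j$ to give the coefficient $(1-k\alpha)$. The term $k\alpha\sigma_k^\alpha\lambda^{-2}\delta_i{}^j$ combines with $\lambda^{-2}\widetilde\Phi\delta_i{}^j$ to give $(1+k\alpha)$. The remaining $-\gamma_0\hat h_i{}^j$ passes through unchanged.

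I expect the main obstacle to be bookkeeping rather than ideas. The risks are getting the signs right in Simons' identity under the convention $h_{ij}=-\langle\vec h_{ij},\nu\rangle$ with outward $\nu$, and tracking the powers of $\lambda$. In particular, the ambient curvature term picks up $\lambda^{-2}$ because $g=\lambda^{-2}\widetilde g$ while $\hat h=\lambda^{-1}h$. I will fix the signs by checking against a geodesic sphere, where every gradient term vanishes and the identity must reduce to the ODE for $\hat h$ implied by Lemma~\ref{lem:basic-normalized-weingarten-evolution-spherical}.
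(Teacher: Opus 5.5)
Your proposal is correct and follows the paper's proof essentially step for step. You expand $\widetilde\nabla^j\widetilde\nabla_i\widetilde\Phi$ by the product and chain rules, commute derivatives with the spherical Simons identity (whose ambient-curvature terms pick up $\lambda^{-2}$ after rescaling), and cancel the mixed terms because $\dot F$ commutes with $\hat h$. You then apply Euler's identity $\dot F^{pq}\hat h_{pq}=k\alpha\sigma_k^\alpha$ and substitute into the basic evolution equation for $\hat h_i{}^j$. The only difference is that you derive the commutation formula from Codazzi, Ricci and Gauss rather than quoting it. Also, the orientation convention for $h$ cannot spoil it: every term in that identity is odd in $h$, so the identity is unchanged under $h\mapsto -h$.
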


\begin{proof}
Recall that
\[
  \widetilde\Phi
  =\lambda^\beta f(\rho)\sigma_k^\alpha(\hat h).
\]
Applying the product and chain rules twice, we obtain
\begin{equation}\label{eq:lem25-expanded-speed-hessian-spherical}
  \begin{aligned}
  \widetilde\nabla^j\widetilde\nabla_i\widetilde\Phi
  ={}&
  \lambda^\beta f(\rho)
  \alpha\sigma_k^{\alpha-1}\dot\sigma_k^{pq}
  \widetilde\nabla^j\widetilde\nabla_i\hat h_{pq}
  +
  \lambda^\beta f(\rho)
  \alpha\sigma_k^{\alpha-1}\ddot\sigma_k^{pq,rs}
  \widetilde\nabla^j\hat h_{pq}
  \widetilde\nabla_i\hat h_{rs}
  \\
  &+
  \lambda^\beta f(\rho)
  \alpha(\alpha-1)\sigma_k^{\alpha-2}
  \dot\sigma_k^{pq}\dot\sigma_k^{rs}
  \widetilde\nabla^j\hat h_{pq}
  \widetilde\nabla_i\hat h_{rs}
  \\
  &+
  \lambda^\beta f'(\rho)
  \alpha\sigma_k^{\alpha-1}\dot\sigma_k^{pq}
  \Bigl(
    \widetilde\nabla^j\rho\,
    \widetilde\nabla_i\hat h_{pq}
    +\widetilde\nabla_i\rho\,
    \widetilde\nabla^j\hat h_{pq}
  \Bigr)
  \\
  &+
  \lambda^\beta f'(\rho)
  \sigma_k^\alpha
  \widetilde\nabla^j\widetilde\nabla_i\rho
  +
  \lambda^\beta f''(\rho)
  \sigma_k^\alpha
  \widetilde\nabla^j\rho\,
  \widetilde\nabla_i\rho.
  \end{aligned}
\end{equation}

For a hypersurface in $\mathbb S^{n+1}$, the standard commutation
formula for the second covariant derivatives of its second fundamental
form is
\begin{equation}\label{eq:lem25-unscaled-commutation-formula-spherical}
  \begin{aligned}
  \nabla^j\nabla_i h_{pq}
  ={}&\nabla_p\nabla_q h_i{}^j
      +h_i{}^j(h^2)_{pq}
      -h_{pq}(h^2)_i{}^j\\
     &+h_q{}^j(h^2)_{pi}
      -h_{pi}(h^2)_q{}^j\\
     &+\delta_i{}^j h_{pq}
      -g_{pq}h_i{}^j
      +\delta_q{}^j h_{pi}
      -g_{pi}h_q{}^j.
  \end{aligned}
\end{equation}
Since $\widetilde g=\lambda^2g$ and
$\hat h_i{}^j=\lambda^{-1}h_i{}^j$, we have, as $(0,2)$-tensors,
$\hat h_{ij}=\lambda h_{ij}$. Therefore, rescaling
\eqref{eq:lem25-unscaled-commutation-formula-spherical} gives
\begin{equation}\label{eq:lem25-scaled-commutation-formula-spherical}
  \begin{aligned}
  \widetilde\nabla^j\widetilde\nabla_i\hat h_{pq}
  ={}&\widetilde\nabla_p\widetilde\nabla_q\hat h_i{}^j
      +\hat h_i{}^j(\hat h^2)_{pq}
      -\hat h_{pq}(\hat h^2)_i{}^j\\
     &+\hat h_q{}^j(\hat h^2)_{pi}
      -\hat h_{pi}(\hat h^2)_q{}^j\\
     &+\lambda^{-2}\Bigl(
        \delta_i{}^j\hat h_{pq}
        -\widetilde g_{pq}\hat h_i{}^j
        +\delta_q{}^j\hat h_{pi}
        -\widetilde g_{pi}\hat h_q{}^j
      \Bigr).
  \end{aligned}
\end{equation}
Euler's identity gives
\begin{equation}\label{eq:lem25-euler-identity-spherical}
  \dot\sigma_k^{pq}\hat h_{pq}=k\sigma_k(\hat{h}).
\end{equation}
Moreover, since $\dot\sigma_k$ commutes with $\hat h$, the mixed
terms in \eqref{eq:lem25-scaled-commutation-formula-spherical} satisfy
\[
  \dot\sigma_k^{pq}
  \bigl(
    \hat h_q{}^j(\hat h^2)_{pi}
    -\hat h_{pi}(\hat h^2)_q{}^j
  \bigr)=0
\]
and
\[
  \dot\sigma_k^{pq}
  \bigl(
    \delta_q{}^j\hat h_{pi}
    -\widetilde g_{pi}\hat h_q{}^j
  \bigr)=0.
\]

Substituting \eqref{eq:lem25-scaled-commutation-formula-spherical} into
the first term on the right-hand side of
\eqref{eq:lem25-expanded-speed-hessian-spherical}, and then using
\eqref{eq:basic-normalized-weingarten-evolution-spherical}, we obtain~\eqref{eq:full-normalized-weingarten-evolution}.
\end{proof}

\begin{lemma}[Evolution of the inverse Weingarten map]
\label{lem:inverse-weingarten-evolution-spherical}
Whenever $\hat h_i{}^j$ is positive definite, let
$\check h_i{}^j$ denote its inverse and set
$(\check h^2)_i{}^j:=\check h_i{}^p\check h_p{}^j$. Then
\begin{equation}\label{eq:full-inverse-weingarten-evolution-spherical}
  \begin{aligned}
  \mathcal L\check h_i{}^j
  ={}&-
  \lambda^\beta f(\rho)
  \alpha\sigma_k^{\alpha-1}\ddot\sigma_k^{pq,rs}
  \check h_i{}^a\check h_b{}^j
  \widetilde\nabla^b\hat h_{pq}
  \widetilde\nabla_a\hat h_{rs}
  \\
  &-
  \lambda^\beta f(\rho)
  \alpha(\alpha-1)\sigma_k^{\alpha-2}
  \dot\sigma_k^{pq}\dot\sigma_k^{rs}
  \check h_i{}^a\check h_b{}^j
  \widetilde\nabla^b\hat h_{pq}
  \widetilde\nabla_a\hat h_{rs}
  \\
  &-2\lambda^\beta f(\rho)
  \alpha\sigma_k^{\alpha-1}\dot\sigma_k^{rs}
  \check h_i{}^a
  (\widetilde\nabla_r\hat h_a{}^b)\check h_b{}^p
  (\widetilde\nabla_s\hat h_p{}^q)\check h_q{}^j
  \\
  &-
  \lambda^\beta f'(\rho)
  \alpha\sigma_k^{\alpha-1}\dot\sigma_k^{pq}
  \check h_i{}^a\check h_b{}^j
  \Bigl(
    \widetilde\nabla^b\rho\,
    \widetilde\nabla_a\hat h_{pq}
    +\widetilde\nabla_a\rho\,
    \widetilde\nabla^b\hat h_{pq}
  \Bigr)
  \\
  &-
  \lambda^\beta f'(\rho)\sigma_k^\alpha
  \check h_i{}^a\check h_b{}^j
  \widetilde\nabla^b\widetilde\nabla_a\rho
  -
  \lambda^\beta f''(\rho)\sigma_k^\alpha
  \check h_i{}^a\check h_b{}^j
  \widetilde\nabla^b\rho\,
  \widetilde\nabla_a\rho
  \\
  &-
  \lambda^\beta f(\rho)
  \alpha\sigma_k^{\alpha-1}
  \left(
    \dot\sigma_k^{pq}(\hat h^2)_{pq}
    -\lambda^{-2}\dot\sigma_k^{pq}\widetilde g_{pq}
  \right)\check h_i{}^j
  \\
  &+
  \lambda^\beta f(\rho)
  (k\alpha-1)\sigma_k^\alpha\delta_i{}^j
  -
  \lambda^{\beta-2}f(\rho)
  (1+k\alpha)\sigma_k^\alpha(\check h^2)_i{}^j
  +\gamma_0\check h_i{}^j.
  \end{aligned}
\end{equation}
\end{lemma}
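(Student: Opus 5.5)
The evolution of $\check h$ follows from the evolution \eqref{eq:full-normalized-weingarten-evolution} of $\hat h$ by differentiating the identity $\check h_i{}^p\hat h_p{}^j=\delta_i{}^j$. Differentiating once gives
\[
  \partial_\tau\check h_i{}^j=-\check h_i{}^a(\partial_\tau\hat h_a{}^b)\check h_b{}^j,
  \qquad
  \widetilde\nabla_r\check h_i{}^j=-\check h_i{}^a(\widetilde\nabla_r\hat h_a{}^b)\check h_b{}^j .
\]
Differentiating the spatial identity once more gives
\[
  \widetilde\nabla_s\widetilde\nabla_r\check h_i{}^j
  =-\check h_i{}^a(\widetilde\nabla_s\widetilde\nabla_r\hat h_a{}^b)\check h_b{}^j
   +\check h_i{}^a(\widetilde\nabla_r\hat h_a{}^b)\check h_b{}^p(\widetilde\nabla_s\hat h_p{}^q)\check h_q{}^j
   +\check h_i{}^a(\widetilde\nabla_s\hat h_a{}^b)\check h_b{}^p(\widetilde\nabla_r\hat h_p{}^q)\check h_q{}^j .
\]
We contract this with $\lambda^\beta f(\rho)\dot F^{rs}$, which is symmetric in $r,s$. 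The two quadratic terms then coincide, and we get
\[
  \mathcal L\check h_i{}^j=-\check h_i{}^a(\mathcal L\hat h_a{}^b)\check h_b{}^j
  -2\lambda^\beta f(\rho)\dot F^{rs}\check h_i{}^a(\widetilde\nabla_r\hat h_a{}^b)\check h_b{}^p(\widetilde\nabla_s\hat h_p{}^q)\check h_q{}^j .
\]
Since $\dot F^{rs}=\alpha\sigma_k^{\alpha-1}\dot\sigma_k^{rs}$, the last term is exactly the third line of \eqref{eq:full-inverse-weingarten-evolution-spherical}.

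It remains to substitute \eqref{eq:full-normalized-weingarten-evolution} into $-\check h_i{}^a(\mathcal L\hat h_a{}^b)\check h_b{}^j$ and sandwich each term.
\begin{itemize}
\item The gradient terms (the $\ddot\sigma_k$ term, the $\dot\sigma_k\dot\sigma_k$ term, and the $f'$ mixed term) and the $f'\widetilde\nabla^2\rho$ and $f''$ terms are simply conjugated and acquire a minus sign. After relabelling indices, these are lines 1, 2, 4 and 5 of the claimed formula.
\item Terms proportional to $\hat h_a{}^b$ become $\check h_i{}^a\hat h_a{}^b\check h_b{}^j=\check h_i{}^j$. This applies to the $(\dot\sigma_k^{pq}(\hat h^2)_{pq}-\lambda^{-2}\dot\sigma_k^{pq}\widetilde g_{pq})$ term and to $-\gamma_0\hat h$, which produces $+\gamma_0\check h_i{}^j$.
\item The term $(1-k\alpha)\sigma_k^\alpha(\hat h^2)_a{}^b$ becomes $(1-k\alpha)\sigma_k^\alpha\delta_i{}^j$, and with the minus sign this gives $(k\alpha-1)\sigma_k^\alpha\delta_i{}^j$.
\item The term $(1+k\alpha)\lambda^{\beta-2}\sigma_k^\alpha\delta_a{}^b$ becomes $-(1+k\alpha)\lambda^{\beta-2}f\sigma_k^\alpha(\check h^2)_i{}^j$.
\end{itemize}
All of these agree with the stated lines.

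The only delicate point is bookkeeping. Raising and lowering indices must be done with $\widetilde g$, so that $\check h_b{}^j\widetilde\nabla^b$ matches the convention used in \eqref{eq:full-normalized-weingarten-evolution}. We also need the symmetry of $\dot F^{rs}$ to combine the two cross terms into the factor $2$. Everything else is a direct substitution.
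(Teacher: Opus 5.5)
Your proposal is correct and follows the paper's route. The paper also differentiates $\check h_i{}^p\hat h_p{}^j=\delta_i{}^j$ and uses the symmetry of $\dot F^{rs}$ to get $\mathcal L\check h=-\check h(\mathcal L\hat h)\check h-2\lambda^\beta f\dot F^{rs}\check h(\widetilde\nabla_r\hat h)\check h(\widetilde\nabla_s\hat h)\check h$. It then substitutes the evolution of $\hat h$ and simplifies with $\check h\hat h\check h=\check h$, $\check h\hat h^2\check h=\delta$ and $\check h\,\delta\,\check h=\check h^2$; your term-by-term bookkeeping reproduces every line of the stated formula.
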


\begin{proof}
The defining identity for the inverse Weingarten map is
\begin{equation}\label{eq:lem26-inverse-weingarten-identity-spherical}
  \check h_i{}^p\hat h_p{}^j=\delta_i{}^j.
\end{equation}
Differentiating \eqref{eq:lem26-inverse-weingarten-identity-spherical}
with respect to $\tau$ and in the spatial directions gives
\begin{equation}\label{eq:lem26-first-derivatives-inverse-weingarten-spherical}
  \begin{aligned}
  \partial_\tau\check h_i{}^j
  &=-\check h_i{}^p
    (\partial_\tau\hat h_p{}^q)\check h_q{}^j,\\
  \widetilde\nabla_r\check h_i{}^j
  &=-\check h_i{}^p
    (\widetilde\nabla_r\hat h_p{}^q)\check h_q{}^j.
  \end{aligned}
\end{equation}
Differentiating the second identity once more, we obtain
\begin{equation}\label{eq:lem26-second-derivative-inverse-weingarten-spherical}
  \begin{aligned}
  \widetilde\nabla_s\widetilde\nabla_r\check h_i{}^j
  ={}&-\check h_i{}^p
      (\widetilde\nabla_s\widetilde\nabla_r\hat h_p{}^q)
      \check h_q{}^j\\
     &+\check h_i{}^a
      (\widetilde\nabla_s\hat h_a{}^b)\check h_b{}^p
      (\widetilde\nabla_r\hat h_p{}^q)\check h_q{}^j\\
     &+\check h_i{}^p
      (\widetilde\nabla_r\hat h_p{}^q)\check h_q{}^a
      (\widetilde\nabla_s\hat h_a{}^b)\check h_b{}^j.
  \end{aligned}
\end{equation}
Since the coefficient
$\lambda^\beta f(\rho)\alpha\sigma_k^{\alpha-1}
\dot\sigma_k^{rs}$ is symmetric in $r,s$, contracting
\eqref{eq:lem26-second-derivative-inverse-weingarten-spherical} and
using \eqref{eq:lem26-first-derivatives-inverse-weingarten-spherical}
gives
\begin{equation}\label{eq:lem26-inverse-weingarten-operator-identity-spherical}
  \begin{aligned}
  \mathcal L\check h_i{}^j
  ={}&-\check h_i{}^p
      (\mathcal L\hat h_p{}^q)\check h_q{}^j\\
     &-2\lambda^\beta f(\rho)
       \alpha\sigma_k^{\alpha-1}\dot\sigma_k^{rs}
       \check h_i{}^a
       (\widetilde\nabla_r\hat h_a{}^b)\check h_b{}^p
       (\widetilde\nabla_s\hat h_p{}^q)\check h_q{}^j.
  \end{aligned}
\end{equation}
Finally, substitute \eqref{eq:full-normalized-weingarten-evolution}
into \eqref{eq:lem26-inverse-weingarten-operator-identity-spherical}
and use
\[
  \check h_i{}^a\hat h_a{}^b\check h_b{}^j
  =\check h_i{}^j,
  \qquad
  \check h_i{}^a(\hat h^2)_a{}^b\check h_b{}^j
  =\delta_i{}^j,
  \qquad
  \check h_i{}^a\delta_a{}^b\check h_b{}^j
  =(\check h^2)_i{}^j.
\]
Collecting the resulting terms proves
\eqref{eq:full-inverse-weingarten-evolution-spherical}.
\end{proof}
\section{The \texorpdfstring{$C^0$}{C0} estimate}
In this section, we derive the $C^0$ estimate for the normalized
flow~\eqref{eq:normalized-radial-flow-hat-h}. We first derive a useful growth estimate on $f$.
\begin{lemma}
\label{lem:model-profile-comparison-spherical}
Assume that either
\eqref{eq:C0-supercritical-decomposition-spherical} and
\eqref{eq:C0-supercritical-flatness-spherical} hold, or
\eqref{eq:C0-critical-decomposition-spherical} and
\eqref{eq:C0-critical-remainder-spherical} hold.  Then, for every
$R_0\in(0,\pi/2)$, there exist constants $0<c\leq C<\infty$,
depending only on $f$, such that
\begin{equation}\label{eq:model-profile-two-sided-spherical}
  cr^\beta\leq f(r)\leq Cr^\beta,
  \qquad 0\leq r\leq R_0.
\end{equation}
\end{lemma}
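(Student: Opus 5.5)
The plan is to show that the ratio $q(r):=f(r)/r^\beta=1+g(r)/r^\beta$ extends to a continuous, strictly positive function on $[0,R_0]$. Compactness then gives the two constants as $c=\min_{[0,R_0]}q$ and $C=\max_{[0,R_0]}q$. On $(0,R_0]$, $q$ is continuous and positive because $f>0$ there. So everything comes down to showing $g(r)/r^\beta\to0$ as $r\downarrow0$, which makes $q(0)=1$.

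\emph{Critical case.} Hypothesis \eqref{eq:C0-critical-remainder-spherical} gives directly $|g(r)|\le C_1r^{\beta+\delta}$ for $0<r\le r_1$. Hence $|g(r)/r^\beta|\le C_1r^\delta\to0$.

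\emph{Supercritical case.} Here $m=\lfloor\beta\rfloor$, and first I will check that $g(0)=0$, since \eqref{eq:C0-supercritical-flatness-spherical} only constrains $g^{(j)}(0)$ for $1\le j\le m$. Because $f(0)=0$ and $0^\beta=0$, we get $g(0)=f(0)=0$. With $g\in C^{m+1}([0,\pi/2))$ and $g^{(j)}(0)=0$ for $0\le j\le m$, Taylor's theorem with Lagrange remainder gives
\[
|g(r)|\le\frac{\sup_{[0,R_0]}|g^{(m+1)}|}{(m+1)!}\,r^{m+1},\qquad 0\le r\le R_0.
\]
Since $m+1>\beta$, this yields $|g(r)/r^\beta|\le C_2r^{m+1-\beta}\to0$.

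The one subtlety is that the Taylor expansion needs one-sided derivatives at $0$ together with continuity of $g^{(m+1)}$ up to $0$. This is exactly what $C^{m+1}([0,\pi/2))$ provides, so I expect no real obstacle. The constants depend on $f$ and on $R_0$ through the extremal values of $q$ on $[0,R_0]$.
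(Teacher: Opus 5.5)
Your proposal is correct and matches the paper's proof. Both show $f(r)/r^\beta\to1$ as $r\downarrow0$: in the supercritical case by Taylor's theorem, using $g(0)=f(0)=0$ and the vanishing derivatives; in the critical case directly from $g=O(r^{\beta+\delta})$. Both then note that this ratio is positive and continuous on the compact interval $[0,R_0]$. Your explicit Lagrange-remainder bound and your observation that the constants also depend on $R_0$ are minor refinements of the same argument.
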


\begin{proof}
Suppose first that
\eqref{eq:C0-supercritical-decomposition-spherical} and
\eqref{eq:C0-supercritical-flatness-spherical} hold.  Then
$m=\lfloor\beta\rfloor$ and $g(0)=0$.
Since $g\in C^{m+1}$ and $g^{(j)}(0)=0$ for $j=0,\ldots,m$,
Taylor's theorem gives
\[
  g(r)=o(r^{\beta}).
\]
If instead \eqref{eq:C0-critical-decomposition-spherical} and
\eqref{eq:C0-critical-remainder-spherical} hold, then
\eqref{eq:C0-critical-remainder-spherical} directly gives
\[
  \frac{g(r)}{r^\beta}=O(r^\delta)\longrightarrow0.
\]
Thus, in either case,
\begin{equation*}
  \lim_{r\downarrow0}\frac{f(r)}{r^\beta}=1.
\end{equation*}
Therefore, the ratio $\frac{f(r)}{r^\beta}$ is a positive continuous
function on $[0,R_0]$. Hence it attains a positive minimum and a
finite maximum on this interval.
\end{proof}

\begin{lemma}
\label{lem:critical-profile-spherical}
Assume that \eqref{eq:C0-critical-decomposition-spherical} and
\eqref{eq:C0-critical-remainder-spherical} hold.  For
$r\in(0,\pi/2)$, define
\begin{equation}\label{eq:critical-profile-factor-spherical}
  A(r):=\frac{f(r)}{r}\cot^{k\alpha}r
\end{equation}
and
\begin{equation}
  E(r) := A(r) - 1.
\end{equation}
Set $\delta_0:=\min\{\delta,2\}>0$. For every
$R_0\in(0,\pi/2)$, there exist constants
$0<a\leq b<\infty$ and $C>0$, depending only on $f$ and $R_0$, such
that
\begin{equation}\label{eq:critical-profile-error-spherical}
  a\leq A(r)\leq b,
  \qquad
  |E(r)|\leq Cr^{\delta_0},
  \qquad 0<r\leq R_0.
\end{equation}
Consequently, if $z:[0,T)\to(0,R_0]$ satisfies
\[
  0<z(t)\leq C_1e^{-c_1t},
  \qquad 0\leq t<T,
\]
for some $C_1,c_1>0$, then
\begin{equation}\label{eq:critical-profile-composition-integrable-spherical}
  \int_0^T |E(z(t))|\,dt
  \leq\frac{CC_1^{\delta_0}}{c_1\delta_0}.
\end{equation}
\end{lemma}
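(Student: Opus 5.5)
Since $\beta=1+k\alpha$, the plan is to write $f(r)=r^{1+k\alpha}+g(r)$, so that
\[
  A(r)=\frac{f(r)}{r^{1+k\alpha}}\Bigl(\frac{r}{\tan r}\Bigr)^{k\alpha}
  =\bigl(1+g(r)r^{-1-k\alpha}\bigr)\,\Bigl(\frac{r\cos r}{\sin r}\Bigr)^{k\alpha}.
\]
We then estimate each factor near $r=0$ separately.

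For the first factor, \eqref{eq:C0-critical-remainder-spherical} gives $|g(r)r^{-1-k\alpha}|\le C r^{\delta}$ for $0<r\le r_1$, with some small $r_1>0$. For the second factor, $s(r):=r\cos r/\sin r$ is smooth and even on $[0,\pi/2)$, with $s(0)=1$ and $s(r)=1-r^2/3+O(r^4)$. Since $s$ is positive and bounded away from $0$ on $[0,R_0]$, the function $s^{k\alpha}$ is smooth there and satisfies $|s(r)^{k\alpha}-1|\le Cr^2$. Multiplying the two expansions yields
\[
  |A(r)-1|\le C(r^\delta+r^2+r^{\delta+2})\le C r^{\delta_0}
  \qquad\text{on }(0,r_1],
\]
where we use $r\le R_0<\pi/2$ to absorb the higher powers into $r^{\delta_0}$, with $\delta_0=\min\{\delta,2\}$.

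On $[r_1,R_0]$ the function $A$ is continuous and positive, because $f>0$ and $\cot r>0$ there. Hence $A$ is bounded above and below by positive constants, and $|E|\le 1+\max A\le C' r_1^{-\delta_0}r^{\delta_0}$. Combining this with the small-$r$ bound gives the estimate on $|E|$ over all of $(0,R_0]$. The two-sided bound $a\le A\le b$ follows in the same way: near $0$ we have $A\to1$, so we may shrink $r_1$ until $A\in[1/2,2]$ on $(0,r_1]$, and then use compactness on $[r_1,R_0]$. Alternatively one can note $A(r)=(f(r)/r^\beta)(r\cot r)^{k\alpha}$ and invoke Lemma \ref{lem:model-profile-comparison-spherical}.

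For \eqref{eq:critical-profile-composition-integrable-spherical}, we compute directly:
\[
  \int_0^T|E(z(t))|\,dt\le C\int_0^T z(t)^{\delta_0}\,dt
  \le CC_1^{\delta_0}\int_0^\infty e^{-c_1\delta_0 t}\,dt
  =\frac{CC_1^{\delta_0}}{c_1\delta_0}.
\]
The only mildly delicate point is the uniform expansion of $(r\cot r)^{k\alpha}$ for non-integer $k\alpha$. This is handled by the positivity of $s$ on $[0,R_0]$ together with the mean value theorem applied to $x\mapsto x^{k\alpha}$ near $x=1$.
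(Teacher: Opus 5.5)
Your proposal is correct and follows essentially the paper's route: you split $A$ into the model factor $(r\cot r)^{k\alpha}=1+O(r^2)$ and the remainder contribution $g(r)r^{-1}\cot^{k\alpha}r$, which is $O(r^{\delta})$ times a bounded factor, bound $|E|$ by $Cr^{\delta_0}$, and then integrate the exponential bound on $z$ exactly as the paper does. Your explicit treatment of the compact range $[r_1,R_0]$, where the asymptotic hypothesis on $g$ by itself gives no information, fills in a step that the paper's proof leaves implicit.
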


\begin{proof}
The bounds for $A$ follow directly from
Lemma~\ref{lem:model-profile-comparison-spherical} and the positive
lower and upper bounds for $r\cot r$ on $(0,R_0]$. By~\eqref{eq:C0-critical-decomposition-spherical}, we have
$$E(r) = r^{k\alpha}\cot^{k\alpha}r-1 + \frac{g(r)}{r}\cot^{k\alpha}r.$$
Hence,
\begin{equation*}
  \begin{split}
     \frac{|E(r)|}{r^{\delta_0}}\leq{} & \frac{|r^{k\alpha}\cot^{k\alpha}r-1|}{r^{\delta_0}} + \frac{|g(r)|}{r^{1+\delta_0}}\cot^{k\alpha}r \\
       \leq &  Cr^{2-\delta_0} + Cr^{k\alpha+\delta-\delta_0}\cot^{k\alpha}r
  \end{split}
\end{equation*}
where we used~\eqref{eq:C0-critical-remainder-spherical} and
$r^{k\alpha}\cot^{k\alpha}r=1+O(r^2)$. Therefore, by our choice of
$\delta_0$, the ratio $\frac{|E(r)|}{r^{\delta_0}}$ is bounded on the
interval $(0,R_0]$.

Finally,
\[
  |E(z(t))|
  \leq CC_1^{\delta_0}e^{-c_1\delta_0t},
\]
which proves \eqref{eq:critical-profile-composition-integrable-spherical}.
\end{proof}

We formulate the $C^0$ estimate as follows.

\begin{lemma}[$C^0$ estimate]
\label{lem:normalized-C0-estimate-spherical}
Under the assumptions of either Theorem~\ref{thm:main} or
Theorem~\ref{thm:main-critical}, there exist positive constants $c_0$ and
$C_0$, depending only on the initial radial bounds and $f$, such that
\begin{equation}\label{eq:normalized-C0-estimate-spherical}
  0<c_0\leq\widetilde\rho(\theta,\tau)\leq C_0.
\end{equation}
\end{lemma}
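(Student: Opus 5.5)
The plan is to compare with geodesic spheres centred at $o$, using the comparison principle for the scalar equation \eqref{eq:radial-flow}. A geodesic sphere of radius $r$ centred at $o$ has $v=1$ and $h_i{}^j=\cot r\,\delta_i{}^j$. It therefore evolves under \eqref{eq:unnormalized-flow} by the ODE
\[
  r'=-f(r)\binom nk^\alpha\cot^{k\alpha}r=-\gamma_0 f(r)\cot^{k\alpha}r .
\]
At a spatial maximum (resp.\ minimum) of $\rho(\cdot,t)$ we have $\nabla\rho=0$ and $\rho_{ij}\le0$ (resp.\ $\ge0$). Hence $h\ge\cot\rho\,\mathrm{Id}$ (resp.\ $\le$) there, and $\sigma_k^\alpha$ is monotone on the convex cone. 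So $\rho_{\max}(t)$ and $\rho_{\min}(t)$ satisfy the differential inequalities
\[
  \tfrac{d}{dt}\rho_{\max}\le-\gamma_0 f(\rho_{\max})\cot^{k\alpha}\rho_{\max},
  \qquad
  \tfrac{d}{dt}\rho_{\min}\ge-\gamma_0 f(\rho_{\min})\cot^{k\alpha}\rho_{\min}
\]
in the Dini sense. It follows that $r_1(t)\le\rho\le r_2(t)$, where $r_1,r_2$ solve the ODE with initial data $\min\rho_0$ and $\max\rho_0$. All values stay in $(0,R_0]$ with $R_0:=\max\rho_0<\pi/2$, since the radii decrease. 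The estimate \eqref{eq:normalized-C0-estimate-spherical} thus reduces to showing $c\le\lambda(t)r_i(t)\le C$ for both sphere solutions.

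\emph{Supercritical case.} Write $y=r^{-(\beta-1-k\alpha)}$ and $s=\beta-1-k\alpha>0$. Then
\[
  y'=s\gamma_0\,\frac{f(r)}{r^{\beta}}\,(r\cot r)^{k\alpha}.
\]
By Lemma~\ref{lem:model-profile-comparison-spherical} and the bounds on $r\cot r$ over $(0,R_0]$, we get $c\le y'\le C$, so $y(t)$ is comparable to $1+t$. On the other hand $\lambda^{s}=1+s\gamma_0 t$ by \eqref{eq:lambda-definition}. Hence $\lambda r_i=(\lambda^s/y)^{1/s}$ is bounded above and below by positive constants depending on $f$ and $\rho_0$.

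\emph{Critical case.} Here we use $A(r)=f(r)r^{-1}\cot^{k\alpha}r$ and $E=A-1$ from Lemma~\ref{lem:critical-profile-spherical}. The ODE becomes
\[
  (\log r)'=-\gamma_0A(r)=-\gamma_0-\gamma_0E(r),
\]
so that
\[
  \log(\lambda r_i(t))=\log r_i(0)-\gamma_0\int_0^tE(r_i)\,dt' .
\]
To bound the integral, we first note $A\ge a>0$, which gives $r_i(t)\le R_0e^{-\gamma_0 a t}$. Then \eqref{eq:critical-profile-composition-integrable-spherical}, applied with $z=r_i$, $C_1=R_0$ and $c_1=\gamma_0a$, bounds $\int_0^\infty|E(r_i)|\,dt$ uniformly. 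Therefore $\log(\lambda r_i)$ is bounded above and below. This is where the error bound $|E|\le Cr^{\delta_0}$ is used, and it is the main point of the argument: a mere two-sided bound $a\le A\le b$ would only give $e^{\gamma_0(1-b)t}\le\lambda r_i\le e^{\gamma_0(1-a)t}$, which is not uniform in $t$.

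The step requiring care is the comparison itself. It must be justified for as long as the solution exists and is strictly convex, so that \eqref{eq:radial-flow} is parabolic and $f$ is only evaluated on $(0,R_0]$. I would do this by the standard argument at first-touching times, or via the Dini-derivative inequalities above, noting that $\nabla\rho=0$ forces $v=1$ at extrema. Converting to $\tau$ is harmless, because $\widetilde\rho=\lambda\rho$ at corresponding times. The constants then depend only on $\min\rho_0$, $\max\rho_0$ and $f$.
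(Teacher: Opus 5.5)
Your proposal is correct and is essentially the paper's proof. Both use the same maximum-principle inequalities at spatial extrema (from $h\ge\cot\rho\,\mathrm{Id}$ at a maximum and $h\le\cot\rho\,\mathrm{Id}$ at a minimum), and in the critical case the same mechanism: exponential decay $\rho\le R_0e^{-a\gamma_0 t}$ combined with the integrability bound \eqref{eq:critical-profile-composition-integrable-spherical} for $E$. The differences are cosmetic: you route through the exact sphere solutions $r_1,r_2$ and linearize the supercritical ODE via $y=r^{-s}$ in $t$, whereas the paper works directly with the normalized extrema and the logistic inequality $\frac{d}{d\tau}\widetilde\rho_+\le\gamma_0\widetilde\rho_+(1-a_0\widetilde\rho_+^{\mu})$ (and its counterpart for $\widetilde\rho_-$) in $\tau$.
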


\begin{proof}
The scalar equation \eqref{eq:radial-flow} gives
$\partial_t\rho<0$ and hence $0<\rho(\theta,t)\leq R_0$ with $R_0$ the initial radial bound.

\begin{itemize}
  \item Suppose first that
  \eqref{eq:C0-supercritical-decomposition-spherical} and
  \eqref{eq:C0-supercritical-flatness-spherical} hold.  Define
  \begin{equation}\label{eq:normalized-radial-extrema-spherical}
    \widetilde\rho_-(\tau)
    :=\min_{\mathbb S^n}\widetilde\rho(\cdot,\tau),
    \qquad
    \widetilde\rho_+(\tau)
    :=\max_{\mathbb S^n}\widetilde\rho(\cdot,\tau).
  \end{equation}
  At a spatial maximum of $\widetilde\rho$, one has $v=1$ and
  $\nabla^2\widetilde\rho\leq0$.  Since $\lambda$ is spatially
  constant, \eqref{eq:weingarten-map-rho} and
  $\hat h_i{}^j=\lambda^{-1}h_i{}^j$ give
  \begin{equation}\label{eq:normalized-maximum-curvature-comparison-spherical}
    \hat h_i{}^j
    \geq\lambda^{-1}\cot
      \frac{\widetilde\rho_+}{\lambda}\,\delta_i{}^j.
  \end{equation}
  Similarly, at a spatial minimum,
  \begin{equation}\label{eq:normalized-minimum-curvature-comparison-spherical}
    \hat h_i{}^j
    \leq\lambda^{-1}\cot
      \frac{\widetilde\rho_-}{\lambda}\,\delta_i{}^j.
  \end{equation}
  Applying the maximum principle directly to
  \eqref{eq:normalized-radial-flow-hat-h} and setting
  $\mu:=\beta-k\alpha-1>0$, we obtain
  \begin{equation}\label{eq:normalized-extrema-inequalities-spherical}
    \begin{aligned}
    \frac{d}{d\tau}\widetilde\rho_+
    &\leq\gamma_0\widetilde\rho_+
     -\gamma_0\lambda^{1+\mu}
      f\left(\frac{\widetilde\rho_+}{\lambda}\right)
      \cot^{k\alpha}\left(\frac{\widetilde\rho_+}{\lambda}\right),\\
    \frac{d}{d\tau}\widetilde\rho_-
    &\geq\gamma_0\widetilde\rho_-
     -\gamma_0\lambda^{1+\mu}
      f\left(\frac{\widetilde\rho_-}{\lambda}\right)
      \cot^{k\alpha}\left(\frac{\widetilde\rho_-}{\lambda}\right).
    \end{aligned}
  \end{equation}
  These inequalities hold at every differentiability time of the
  extrema, equivalently in the barrier sense.  By
  \eqref{eq:model-profile-two-sided-spherical} and the
  bounds for $r\cot r$ on $(0,R_0]$, there exist constants
  $0<a_0<a_1$, depending only on $f$ and $R_0$, such that
\begin{equation}\label{eq:normalized-logistic-coefficient-spherical}
  a_0\leq
  \frac{f(r)}{r^\beta}(r\cot r)^{k\alpha}
  \leq a_1,
  \qquad 0<r\leq R_0.
\end{equation}
Consequently, \eqref{eq:normalized-extrema-inequalities-spherical} becomes
\begin{equation}\label{eq:normalized-logistic-inequalities-spherical}
  \begin{aligned}
  \frac{d}{d\tau}\widetilde\rho_+
  &\leq\gamma_0\widetilde\rho_+
       (1-a_0\widetilde\rho_+^\mu),\\
  \frac{d}{d\tau}\widetilde\rho_-
  &\geq\gamma_0\widetilde\rho_-
       (1-a_1\widetilde\rho_-^\mu).
  \end{aligned}
\end{equation}
Therefore the standard comparison principle gives
\begin{equation}\label{eq:normalized-C0-supercritical-spherical}
  \min\{\widetilde\rho_-(0),a_1^{-1/\mu}\}
  \leq\widetilde\rho(\theta,\tau)
  \leq\max\{\widetilde\rho_+(0),a_0^{-1/\mu}\}.
\end{equation}
  \item Suppose next that
  \eqref{eq:C0-critical-decomposition-spherical} and
  \eqref{eq:C0-critical-remainder-spherical} hold.  In this case
  $\mu=0$ and $\tau=t$.  Define the extrema of the unnormalized radial
  function by
  \[
    \rho_-(t):=\min_{\mathbb S^n}\rho(\cdot,t),
    \qquad
    \rho_+(t):=\max_{\mathbb S^n}\rho(\cdot,t).
  \]
  Then $\widetilde\rho_\pm=\lambda\rho_\pm$.  Let $A$ and $E$ be
  the functions defined in Lemma~\ref{lem:critical-profile-spherical}.
  
  At a spatial maximum of $\rho$, one has $\nabla\rho=0$ and
  $\nabla^2\rho\leq0$, and hence
  \[
    h_i{}^j[\rho]\geq\cot\rho_+\,\delta_i{}^j.
  \]
  Applying \eqref{eq:radial-flow} at every differentiability time of
  $\rho_+$ gives
  \begin{equation}\label{eq:critical-unnormalized-maximum-spherical}
    \frac{d}{dt}\rho_+
    \leq-\gamma_0 f(\rho_+)\cot^{k\alpha}\rho_+
    =-\gamma_0A(\rho_+)\rho_+.
  \end{equation}
  By Lemma~\ref{lem:critical-profile-spherical}, $A\geq a>0$ on
  $(0,R_0]$.  Thus the ODE comparison yields
  \begin{equation}\label{eq:critical-unnormalized-extrema-decay-spherical}
    0<\rho_+\leq\rho_+(0)e^{-a\gamma_0t}.
  \end{equation}
  The last assertion of Lemma~\ref{lem:critical-profile-spherical} now
  gives
  \begin{equation}\label{eq:critical-upper-error-integrable-spherical}
    \int_0^T |E(\rho_+(t))|\,dt\leq C,
  \end{equation}
  where $C$ is independent of $T$.
  Since $\lambda_t=\gamma_0\lambda$, equation
  \eqref{eq:critical-unnormalized-maximum-spherical} implies
  \begin{equation}\label{eq:critical-lambda-maximum-comparison-spherical}
    \frac{d}{dt}\log(\lambda\rho_+)
    \leq\gamma_0\bigl(1-A(\rho_+)\bigr)
    =-\gamma_0E(\rho_+)
    \leq\gamma_0|E(\rho_+)|.
  \end{equation}
  Integrating and using
  \eqref{eq:critical-upper-error-integrable-spherical}, we obtain
  \[
    \widetilde\rho_+(t)=\lambda(t)\rho_+(t)
    \leq\rho_+(0)
      \exp\left(
        \gamma_0\int_0^T|E(\rho_+(s))|\,ds
      \right)
    \leq C_0.
  \]

  We next estimate the minimum radius.  At a spatial minimum of
  $\rho$, one has $\nabla\rho=0$ and $\nabla^2\rho\geq0$, and hence
  \[
    h_i{}^j[\rho]\leq\cot\rho_-\,\delta_i{}^j.
  \]
  Therefore \eqref{eq:radial-flow} gives, at every differentiability
  time of $\rho_-$,
  \begin{equation}\label{eq:critical-unnormalized-minimum-spherical}
    \frac{d}{dt}\rho_-
    \geq-\gamma_0 f(\rho_-)\cot^{k\alpha}\rho_-
    =-\gamma_0A(\rho_-)\rho_-.
  \end{equation}
  Since $0<\rho_-\leq\rho_+$,
  \eqref{eq:critical-unnormalized-extrema-decay-spherical} and the last
  assertion of Lemma~\ref{lem:critical-profile-spherical} imply
  \begin{equation}\label{eq:critical-lower-error-integrable-spherical}
    \int_0^T |E(\rho_-(t))|\,dt\leq C,
  \end{equation}
  where $C$ is independent of $T$.
  Combining \eqref{eq:critical-unnormalized-minimum-spherical} with
  $\lambda_t=\gamma_0\lambda$, we obtain
  \begin{equation}\label{eq:critical-lambda-minimum-comparison-spherical}
    \frac{d}{dt}\log(\lambda\rho_-)
    \geq\gamma_0\bigl(1-A(\rho_-)\bigr)
    =-\gamma_0E(\rho_-)
    \geq-\gamma_0|E(\rho_-)|.
  \end{equation}
  Integrating and using
  \eqref{eq:critical-lower-error-integrable-spherical}, we obtain
  \[
    \widetilde\rho_-(t)=\lambda(t)\rho_-(t)
    \geq\rho_-(0)
      \exp\left(
        -\gamma_0\int_0^T|E(\rho_-(s))|\,ds
      \right)
    \geq c_0>0.
  \]
  Consequently,
  \[
    c_0\leq\widetilde\rho_-(t)
    \leq\widetilde\rho(\theta,t)
    \leq\widetilde\rho_+(t)\leq C_0,
  \]
  which completes the proof of
  \eqref{eq:normalized-C0-estimate-spherical} in the critical case.
\end{itemize}
\end{proof}

\begin{corollary}\label{cor:lambda-sin-rho-two-sided-spherical}
Under the assumptions of either Theorem~\ref{thm:main} or
Theorem~\ref{thm:main-critical},
there exist constants $c_1,C_1>0$ such that
\[
  c_1\leq\lambda\sin\rho\leq C_1.
\]
\end{corollary}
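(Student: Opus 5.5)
The corollary follows directly from Lemma~\ref{lem:normalized-C0-estimate-spherical} and the elementary comparison between $\sin r$ and $r$ on a compact subinterval of $[0,\pi/2)$. We write
\[
  \lambda\sin\rho=\widetilde\rho\,\frac{\sin\rho}{\rho},
\]
using $\widetilde\rho=\lambda\rho$ pointwise on $\mathbb S^n$ at each time. Here $\rho>0$ because the hypersurface encloses $o$.

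For the second factor, recall from the proof of Lemma~\ref{lem:normalized-C0-estimate-spherical} that \eqref{eq:radial-flow} gives $\partial_t\rho<0$. Hence $0<\rho(\theta,t)\le R_0$, where $R_0:=\max\rho_0<\pi/2$ is the initial radial bound. The function $r\mapsto \sin r/r$ is decreasing on $(0,\pi/2)$ and tends to $1$ as $r\downarrow0$. Therefore
\[
  \frac{\sin R_0}{R_0}\le\frac{\sin\rho}{\rho}\le 1 .
\]
The lower constant is positive and depends only on $M_0$.

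For the first factor, Lemma~\ref{lem:normalized-C0-estimate-spherical} gives $c_0\le\widetilde\rho\le C_0$ under the hypotheses of either Theorem~\ref{thm:main} or Theorem~\ref{thm:main-critical}. Multiplying the two bounds yields
\[
  c_1:=c_0\,\frac{\sin R_0}{R_0}\le\lambda\sin\rho\le C_0=:C_1 .
\]
Both constants depend only on $f$ and the initial hypersurface. Since $\lambda$ is spatially constant, the estimate holds at every point and every time.

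There is no real obstacle here. The only point needing care is that $\rho$ stays uniformly away from $\pi/2$, so that $\sin\rho/\rho$ has a positive lower bound. This is exactly the monotonicity $\partial_t\rho<0$ already established in the $C^0$ estimate.
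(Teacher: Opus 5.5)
Your proof is correct and is essentially the paper's argument: combine the two-sided bound $c_0\le\widetilde\rho\le C_0$ from Lemma~\ref{lem:normalized-C0-estimate-spherical} with a two-sided comparison of $\sin\rho$ and $\rho$. The only difference is that the paper uses Jordan's inequality $\frac{2}{\pi}\rho\le\sin\rho\le\rho$, valid on the whole interval $(0,\pi/2)$, so it needs neither the monotonicity $\rho\le R_0$ nor your sharper constant $\sin R_0/R_0$.
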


\begin{proof}
Since the hypersurface lies in the open hemisphere (i.e., $0<\rho<\pi/2$), it follows that
\[
  \frac{2}{\pi}\rho\leq\sin\rho\leq\rho.
\]
Multiplying by $\lambda$ and using
\eqref{eq:normalized-C0-estimate-spherical}, we obtain
\[
  \frac{2c_0}{\pi}
  \leq\lambda\sin\rho
  \leq C_0,
\]
which proves the assertion.
\end{proof}

\begin{remark}
For the $C^0$ estimate alone, the assumptions on $f$ in the
supercritical case can be weakened.  It is enough to assume
\[
  0<\liminf_{r\downarrow0}\frac{f(r)}{r^\beta}
  \leq
  \limsup_{r\downarrow0}\frac{f(r)}{r^\beta}
  <\infty.
\]
Indeed, since $f$ is positive and continuous away from the origin,
these asymptotic bounds give the two-sided estimate
\eqref{eq:model-profile-two-sided-spherical} on every fixed interval
$(0,R_0]$.

In the critical case $\beta=1+k\alpha$, the present argument requires
the stronger decomposition
\[
  f(r)=r^{1+k\alpha}+g(r),
  \qquad
  g(r)=O\bigl(r^{1+k\alpha+\delta}\bigr)
\]
for some $\delta>0$.  Thus the remainder $g$ must decay strictly faster
than the leading term $r^{1+k\alpha}$ as $r\downarrow0$.  This positive
power gain makes the error $E(\rho_\pm(t))$ integrable in time.  The
weaker condition $g=o(r^{1+k\alpha})$ alone does not, in general,
guarantee this integrability.
\end{remark}

\section{The \texorpdfstring{$C^1$}{C1} estimate}

The strict convexity assumption is imposed only on the initial
hypersurface. By continuity, there exists a maximal time interval
$[0,\tau_*)$ on which the evolving hypersurface remains strictly
convex. Recall the spherical support function $u=\sin\rho/v$ from
\eqref{eq:radial-support-function} and its normalized counterpart
$\widetilde u=\lambda u$ from
\eqref{eq:preliminary-normalized-support-spherical}.

\begin{lemma}[$C^1$ estimate on the maximal convexity interval]
\label{lem:normalized-C1-estimate-spherical}
Under the assumptions of either Theorem~\ref{thm:main} or
Theorem~\ref{thm:main-critical}, there exist constants $c,C>0$ depending only on the $C^0$ bounds of Lemma~\ref{lem:normalized-C0-estimate-spherical},
independent of $\tau_*$, such that on $\mathbb S^n\times [0,\tau_*)$,
\begin{equation}\label{eq:normalized-C1-estimate-spherical}
  v\leq C,
  \qquad
  \widetilde u\geq c,
  \qquad
  |\nabla\widetilde\rho|\leq C.
\end{equation}
\end{lemma}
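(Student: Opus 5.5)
The gradient bound should come from convexity together with the $C^0$ estimate of Lemma~\ref{lem:normalized-C0-estimate-spherical}, by a purely geometric argument at each fixed time. No maximum principle for the evolution of $\widetilde u$ is needed. On $[0,\tau_*)$ the hypersurface $M_{t(\tau)}$ is strictly convex and encloses $o$. By the discussion at the beginning of Subsection~\ref{subsec:normalized-flow}, it bounds a convex body $\Omega_t\Subset\mathbb S^{n+1}_+$ with $o\in\operatorname{int}\Omega_t$. Write $\rho_\pm(t)$ for the extrema of $\rho(\cdot,t)$. Then $\Omega_t$ contains the geodesic ball $B_{\rho_-}(o)$ and is contained in $B_{\rho_+}(o)$. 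Moreover, $\rho_+/\rho_-=\widetilde\rho_+/\widetilde\rho_-\le C_0/c_0$ by Lemma~\ref{lem:normalized-C0-estimate-spherical}.

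\textbf{Step 1: lower bound for $u$.} At any point $X\in M_t$, the totally geodesic hypersphere tangent to $M_t$ at $X$ is a supporting hyperplane of $\Omega_t$. It therefore does not meet the open ball $B_{\rho_-}(o)$. The spherical support function $u=\langle\sin\rho\,\partial_\rho,\nu\rangle$ equals $\sin d$, where $d$ is the geodesic distance from $o$ to this great hypersphere. This follows from spherical trigonometry: in the right triangle formed by $o$, $X$ and the foot point, $\sin d=\sin\rho\cos\angle(\partial_\rho,\nu)$. Hence $u\ge\sin\rho_-$. I will verify this identification carefully, including that $\nu$ points away from $o$ so that $u>0$, because it is the only geometric input.

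\textbf{Step 2: the bounds.} Multiplying Step 1 by $\lambda$ and using Corollary~\ref{cor:lambda-sin-rho-two-sided-spherical} together with Lemma~\ref{lem:normalized-C0-estimate-spherical} gives
\[
\widetilde u=\lambda u\ge\lambda\sin\rho_-\ge \tfrac{2}{\pi}\widetilde\rho_-\ge \tfrac{2c_0}{\pi}=:c.
\]
Next, $v=\sin\rho/u=\lambda\sin\rho/\widetilde u\le C_1/c$, again by Corollary~\ref{cor:lambda-sin-rho-two-sided-spherical}. Finally, from \eqref{eq:radial-graph-factor},
\[
|\nabla\rho|^2=\sin^2\rho\,(v^2-1),
\]
so
\[
|\nabla\widetilde\rho|=\lambda|\nabla\rho|\le\lambda\sin\rho\,v\le C_1\cdot C_1/c.
\]
All constants depend only on $c_0,C_0$ and the corollary's constants, hence only on the $C^0$ data, and not on $\tau_*$.

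\textbf{Main obstacle.} The only delicate point is Step 1, the support-function identity $u=\sin(\operatorname{dist}(o,T_XM))$ in the sphere together with the inclusion $B_{\rho_-}(o)\subset\Omega_t$ lying on the correct side of each tangent great sphere. The inclusion holds because every ray from $o$ exits $\Omega_t$ exactly once, at distance $\ge\rho_-$. The identity can be checked directly in the $2$-plane spanned by $\partial_\rho$ and $\nu$ using \eqref{eq:radial-unit-normal}.

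If the geometric route proves awkward, the fallback is a maximum principle for $\varphi=\log|\nabla\rho|^2$ or for $\widetilde u^{-1}$ via Lemma~\ref{lem:basic-normalized-support-evolution-spherical}. Its sign analysis, however, involves the $f'$ term and is less clean than the geometric argument.
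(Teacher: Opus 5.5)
Your proposal is correct, but it obtains the key bound $\widetilde u\ge c$ by a different route from the paper.

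\textbf{The paper's route.} It argues at a minimum point $\theta_0$ of $\widetilde u(\cdot,\tau)$. Since $\nabla_i u=h_i{}^j\langle V,X_j\rangle$ with $V=\sin\rho\,\partial_\rho$, invertibility of $h$ on $[0,\tau_*)$ forces $\langle V,X_j\rangle=\sin\rho\,\rho_j=0$ there. Hence $v(\theta_0)=1$ and $\min\widetilde u=\lambda\sin\rho(\theta_0,\tau)$, which Corollary~\ref{cor:lambda-sin-rho-two-sided-spherical} bounds below. The bounds on $v$ and $|\nabla\widetilde\rho|$ then follow exactly as in your Step~2.

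\textbf{Your route.} You identify $u$ with $\sin d$, where $d$ is the distance from $o$ to the tangent great sphere. You then use that this sphere supports $\Omega_t\supset B_{\rho_-}(o)$, giving $u\ge\sin\rho_-$ everywhere. The identification you flag as delicate is immediate in the ambient model $\mathbb S^{n+1}\subset\mathbb R^{n+2}$:
\begin{itemize}
\item one has $V=-o+\cos\rho\,X$ and $\langle X,\nu\rangle=0$, so $u=-\langle o,\nu\rangle$;
\item the tangent great sphere is $\{y:\langle y,\nu\rangle=0\}$, so $\sin d=|\langle o,\nu\rangle|$;
\item positivity $u=\sin\rho/v>0$ is read off from \eqref{eq:radial-support-function}.
\end{itemize}
The fact that $d\le\pi/2$ makes $\sin$ monotone in the comparison $d\ge\rho_-$.

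\textbf{What each buys.} The paper's argument is shorter and purely local. Inside the lemma it uses only the radial-graph formulas and invertibility of $h$ at a single point. Yours needs the global convex-body structure, namely the supporting-hyperplane property. In exchange, it uses only convexity of $\Omega_t$, not invertibility of $h$, so it would survive weak convexity. It also gives the pointwise bound $u\ge\sin\rho_-$ a transparent geometric meaning.

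The ratio bound $\rho_+/\rho_-\le C_0/c_0$ that you record is never used and can be dropped.
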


\begin{proof}
Fix $\tau\in [0,\tau_*)$ and let $\theta_0$ be a minimum point of
$\widetilde u(\cdot,\tau)$.  Put
$V=\sin\rho\,\partial_\rho$.  Since
$\overline\nabla V=\cos\rho\,\mathrm{Id}$, differentiation of
$u=\langle V,\nu\rangle$ gives
\begin{equation}\label{eq:first-derivative-support-C1-spherical}
  \nabla_i u
  =h_i{}^j\langle V,X_j\rangle.
\end{equation}
Because $\lambda$ is spatially constant, at $\theta_0$ we have
\begin{equation}\label{eq:support-minimum-C1-spherical}
  0=\nabla_i\widetilde u
   =\lambda h_i{}^j\langle V,X_j\rangle.
\end{equation}
On $[0,\tau_*)$, the Weingarten map $h_i{}^j$ is positive definite and hence
invertible.  It follows that
\[
  \langle V,X_j\rangle=0
  \qquad\text{for every }j.
\]
For the radial graph,
\[
  \langle V,X_j\rangle=\sin\rho\,\rho_j.
\]
Since $0<\rho<\pi/2$, it follows that
$\rho_j(\theta_0,\tau)=0$ for every $j$. Consequently,
$v(\theta_0,\tau)=1$ and
\[
  \min_{\mathbb S^n}\widetilde u(\cdot,\tau)
  =\lambda\sin\rho(\theta_0,\tau).
\]
Corollary~\ref{cor:lambda-sin-rho-two-sided-spherical} now yields
\begin{equation}\label{eq:normalized-support-lower-bound-C1-spherical}
  \widetilde u\geq c
  \qquad\text{on }\mathbb S^n\times[0,\tau_*).
\end{equation}
The same corollary gives $\lambda\sin\rho\leq C$, and hence
\[
  v=\frac{\lambda\sin\rho}{\widetilde u}\leq C.
\]
Finally, the definition of $v$ implies
\[
  |\nabla\widetilde\rho|
  =\lambda|\nabla\rho|
  =\lambda\sin\rho\sqrt{v^2-1}\leq C.
\]
All constants depend only on the constants in the normalized $C^0$
estimate and are independent of $\tau_*$.
\end{proof}

\section{The \texorpdfstring{$C^2$}{C2} estimate}
\begin{lemma}\label{lem:profile-logarithmic-derivative-bound-spherical}
Assume that either
\eqref{eq:C0-supercritical-decomposition-spherical},
\eqref{eq:C0-supercritical-flatness-spherical}, and
\eqref{eq:main-profile-root-convexity} hold, or that
\eqref{eq:C0-critical-decomposition-spherical},
\eqref{eq:C0-critical-remainder-spherical}, and
\eqref{eq:critical-profile-root-convexity} hold.  Then, for every
$R_0\in(0,\pi/2)$, there exists a constant $C>0$, depending only on
$f$, such that
\begin{equation}\label{eq:profile-logarithmic-derivative-bound-spherical}
  0\leq\frac{r f'(r)}{f(r)}\leq C,
  \qquad 0<r\leq R_0.
\end{equation}
\end{lemma}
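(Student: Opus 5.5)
The plan is to work with $\varphi:=f^{1/(1+k\alpha)}$, which is convex on $(0,\pi/2)$ by \eqref{eq:main-profile-root-convexity} (respectively \eqref{eq:critical-profile-root-convexity}), smooth and positive on $(0,\pi/2)$, and continuous on $[0,\pi/2)$ with $\varphi(0)=0$. Since
\[
  \frac{rf'(r)}{f(r)}=(1+k\alpha)\frac{r\varphi'(r)}{\varphi(r)},
\]
it suffices to bound $r\varphi'/\varphi$ from below by $0$ and from above by a constant on $(0,R_0]$. Both bounds will come from the monotonicity of difference quotients of a convex function, together with the two-sided comparison of Lemma~\ref{lem:model-profile-comparison-spherical}.

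\emph{Lower bound.} For $0<s<r$, convexity gives $\varphi(r)-\varphi(s)\le\varphi'(r)(r-s)$. Letting $s\downarrow0$ and using $\varphi(0)=0$ together with continuity at $0$, we get $\varphi(r)\le r\varphi'(r)$. Hence $r\varphi'/\varphi\ge1>0$, and in fact $rf'/f\ge1+k\alpha$.

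\emph{Upper bound.} Fix $R_1:=\tfrac12(R_0+\pi/2)\in(R_0,\pi/2)$ and apply Lemma~\ref{lem:model-profile-comparison-spherical} on $[0,R_1]$. This gives $c r^{\beta/(1+k\alpha)}\le\varphi(r)\le C r^{\beta/(1+k\alpha)}$ there. We then split into two ranges.
\begin{itemize}
\item For $0<r\le R_1/2$, convexity gives $\varphi'(r)\le\bigl(\varphi(2r)-\varphi(r)\bigr)/r$. Therefore
\[
  \frac{r\varphi'(r)}{\varphi(r)}\le\frac{\varphi(2r)}{\varphi(r)}-1\le\Bigl(\frac{C2^\beta}{c}\Bigr)^{1/(1+k\alpha)} .
\]
\item For $R_1/2\le r\le R_0$, convexity gives $\varphi'(r)\le\bigl(\varphi(R_1)-\varphi(r)\bigr)/(R_1-r)\le\varphi(R_1)/(R_1-R_0)$. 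Meanwhile $\varphi(r)\ge\min_{[R_1/2,R_0]}\varphi>0$, so $r\varphi'/\varphi$ is bounded by a constant depending only on $f$ and $R_0$.
\end{itemize}

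The only delicate point is the behaviour as $r\downarrow0$. A naive bound $\varphi'\le C$ combined with $\varphi\gtrsim r^{\beta/(1+k\alpha)}$ fails there, because $\beta\ge1+k\alpha$. The doubling trick above removes the difficulty by reducing it to the scale invariance of the comparison $f\sim r^\beta$. As a cross-check, one could also derive the upper bound directly from the Taylor hypotheses: $g'(r)=O(r^m)$ in the supercritical case and $g'(r)=O(r^{N-1})$ in the critical case, and in both $rg'=o(r^\beta)$, giving $rf'/f\to\beta$. The convexity argument is cleaner, however, and uses only the stated structural condition.
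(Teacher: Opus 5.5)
Your proposal is correct and follows the paper's proof: the same substitution $f^{1/(1+k\alpha)}$, the chord-slope lower bound using the value $0$ at the origin, and the doubling estimate $\varphi'(r)\le(\varphi(2r)-\varphi(r))/r$ combined with Lemma~\ref{lem:model-profile-comparison-spherical} near $r=0$. The only difference is cosmetic: away from the origin you bound $\varphi'$ by another convexity chord to $R_1$, whereas the paper simply uses smoothness and positivity of $f$ on the compact interval $[R_0/2,R_0]$.
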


\begin{proof}
Set $q=f^{1/(1+k\alpha)}$. Since $q$ is
convex on $(0,\pi/2)$, $q(0)=0$ and $q(r)>0$ for $r>0$, it follows that 
\[
  q'(r)\geq\frac{q(r)-q(0)}{r}
  =\frac{q(r)}{r}>0,
\]
and hence $f'(r) > 0$ for $r > 0$.

Let $0<r\leq R_0/2$. Again by the convexity of $q$, we have
\[
  q'(r)\leq\frac{q(2r)-q(r)}{r}
  \leq\frac{q(2r)}{r}.
\]
Therefore,
\begin{equation}\label{eq:logarithmic-derivative-chord-bound-spherical}
  \frac{r f'(r)}{f(r)}
  =(1+k\alpha)\frac{r q'(r)}{q(r)}
  \leq(1+k\alpha)\frac{q(2r)}{q(r)}.
\end{equation}
By Lemma~\ref{lem:model-profile-comparison-spherical},
$cr^\beta\leq f(r)\leq Cr^\beta$ on $[0,R_0]$.  Hence
\[
  \frac{q(2r)}{q(r)}
  =\left(\frac{f(2r)}{f(r)}\right)^{\frac1{1+k\alpha}}
  \leq C.
\]
This proves the required upper bound for $0<r\leq R_0/2$.  On the
compact interval $[R_0/2,R_0]$, the smoothness and positivity of $f$
imply that $r f'(r)/f(r)$ is bounded.  This proves
\eqref{eq:profile-logarithmic-derivative-bound-spherical}.
\end{proof}

\begin{lemma}[Lower bound for the normalized speed]
\label{lem:normalized-speed-lower-bound-spherical}
On the maximal strictly convex interval $[0,\tau_*)$, there exists a
constant $c_{\widetilde\Phi}>0$, independent of $\tau_*$, such that
\begin{equation}\label{eq:normalized-speed-lower-bound-spherical}
  \widetilde\Phi\geq c_{\widetilde\Phi}.
\end{equation}
\end{lemma}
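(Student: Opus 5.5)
The plan is to apply the maximum principle to the quotient $Q:=\widetilde\Phi/(\widetilde u-\varepsilon)$, or more simply to $\widetilde\Phi$ combined with $\widetilde u$ in the usual Tso-type form $\widetilde\Phi/(\widetilde u-c/2)$. Here $c$ is the lower bound for $\widetilde u$ from Lemma~\ref{lem:normalized-C1-estimate-spherical}. As a first attempt, though, I would work with $\widetilde\Phi$ alone via Lemma~\ref{lem:basic-normalized-speed-evolution-spherical}. At a spatial minimum of $\widetilde\Phi$ we have $\widetilde\nabla\widetilde\Phi=0$ and $\widetilde\nabla^2\widetilde\Phi\ge0$, so $\mathcal L\widetilde\Phi\le\partial_\tau\widetilde\Phi$. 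The terms $\lambda^\beta f\widetilde\Phi\dot F^{ij}(\hat h^2)_{ij}$ and $\lambda^{\beta-2}f\widetilde\Phi\dot F^{ij}\widetilde g_{ij}$ are nonnegative on $\Gamma_n^+$. The only negative term is $-\frac{f'(\rho)}{\lambda f(\rho)v}\widetilde\Phi^2$. By Lemma~\ref{lem:profile-logarithmic-derivative-bound-spherical}, $f'/f\le C/\rho$, so $\frac{f'}{\lambda f v}\le \frac{C}{\lambda\rho}=\frac{C}{\widetilde\rho}\le C/c_0$ by Lemma~\ref{lem:normalized-C0-estimate-spherical}. Hence at a minimum point
\[
\frac{d}{d\tau}\widetilde\Phi_{\min}\ \ge\ (\beta-k\alpha)\gamma_0\widetilde\Phi_{\min}-C\widetilde\Phi_{\min}^2
\]
in the barrier sense. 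This logistic inequality gives $\widetilde\Phi\ge\min\{\widetilde\Phi_{\min}(0),(\beta-k\alpha)\gamma_0/C\}>0$, uniformly in $\tau_*$, and that would finish the proof.

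The key steps, in order, are as follows. First, I check the sign of each term in \eqref{eq:basic-normalized-speed-evolution-spherical}. On $\Gamma_n^+$, $\dot\sigma_k^{pq}$ is positive definite, so both curvature terms are nonnegative, and $f'\ge0$ by Lemma~\ref{lem:profile-logarithmic-derivative-bound-spherical}. Second, I bound the coefficient of $\widetilde\Phi^2$ uniformly using the logarithmic derivative bound together with $\widetilde\rho\ge c_0$, noting that $\rho\le R_0$ since $\rho$ is decreasing, and $v\ge1$. Third, I run the ODE comparison on $\widetilde\Phi_{\min}(\tau)$, which is Lipschitz, using Hamilton's trick at differentiability times as in the $C^0$ estimate. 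Note that $\beta-k\alpha\ge1>0$ in both regimes, so the linear term is genuinely positive.

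I expect the main obstacle to be confirming that no hidden negative contribution is lost, that is, that \eqref{eq:basic-normalized-speed-evolution-spherical} is exact with $\mathcal L$ containing only the Hessian term and no gradient term from $\rho$. If a gradient term were present, it would vanish at the minimum anyway. A second concern is that the constant $C/c_0$ must not depend on $\tau_*$; it depends only on $f$, $R_0$ and $c_0$, so this is fine. If for some reason the bound $f'/(\lambda f)\lesssim1/\widetilde\rho$ failed near $r=R_0$, the compact-interval part of Lemma~\ref{lem:profile-logarithmic-derivative-bound-spherical} still covers it. As a fallback, I would use the Tso quotient with $\widetilde u$ from Lemma~\ref{lem:basic-normalized-support-evolution-spherical}, but I do not expect to need it.
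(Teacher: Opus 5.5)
Your proposal is correct and is essentially the paper's own argument. The paper also applies the minimum principle to $\widetilde\Phi$ using the speed evolution equation, drops the two nonnegative curvature terms by strict convexity, bounds $f'/(\lambda f v)\le C/\widetilde\rho\le C$ via the logarithmic-derivative lemma and the $C^0$ estimate, and concludes by logistic comparison using $\beta-k\alpha\ge1$; the quotient $\widetilde\Phi/(\widetilde u-b)$ you mention as a fallback is needed only later, for the upper speed bound.
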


\begin{proof}
By Lemma~\ref{lem:model-profile-comparison-spherical}, the normalized
$C^0$ estimate, and $\rho=\widetilde\rho/\lambda$, we have
\[
  c\widetilde\rho^\beta
  \leq\lambda^\beta f(\rho)
  \leq C\widetilde\rho^\beta.
\]
Lemma~\ref{lem:profile-logarithmic-derivative-bound-spherical} also
gives
\[
  \frac{f'(\rho)}{\lambda f(\rho)}
  =\frac{\rho f'(\rho)}{f(\rho)}
    \frac{1}{\lambda\rho}
  =\frac{\rho f'(\rho)}{f(\rho)}
    \frac{1}{\widetilde\rho}.
\]
Since $0<c_0\leq\widetilde\rho\leq C_0$, it follows that
\begin{equation}\label{eq:speed-profile-coefficient-bounds}
  0<c\leq\lambda^\beta f(\rho)\leq C,
  \qquad
  0\leq\frac{f'(\rho)}{\lambda f(\rho)}\leq C.
\end{equation}

Let
\[
  \widetilde{\Phi}_{\min}(\tau):=\min_{\mathbb S^n}\widetilde\Phi(\cdot,\tau).
\]
At a spatial minimum of $\widetilde\Phi$, strict convexity and the
ellipticity of $F$ imply
\[
  \lambda^\beta f(\rho)\widetilde\Phi
    \dot F^{ij}(\hat h^2)_{ij}\geq0,
  \qquad
  \lambda^{\beta-2}f(\rho)\widetilde\Phi
    \dot F^{ij}\widetilde g_{ij}\geq0.
\]
Consequently, the speed evolution equation
\eqref{eq:basic-normalized-speed-evolution-spherical} and
\eqref{eq:speed-profile-coefficient-bounds} give
\begin{equation}\label{eq:minimum-speed-logistic-inequality-spherical}
  \frac{d}{d\tau}\widetilde{\Phi}_{\min}
  \geq(\beta-k\alpha)\gamma_0\widetilde{\Phi}_{\min}-C\widetilde{\Phi}_{\min}^2.
\end{equation}
Since $\beta-k\alpha\geq1$, comparison with the corresponding logistic
equation yields
\[
  \widetilde\Phi_{\min}(\tau)
  \geq
  \min\left\{
    \widetilde\Phi_{\min}(0),
    \frac{(\beta-k\alpha)\gamma_0}{C}
  \right\}>0.
\]
This proves \eqref{eq:normalized-speed-lower-bound-spherical}.
\end{proof}

\begin{lemma}\label{lem:profile-second-derivative-bound-spherical}
Assume that the conditions of either Theorem~\ref{thm:main} or
Theorem~\ref{thm:main-critical} hold.  Then, for every
$R_0\in(0,\pi/2)$, there exists a constant $C>0$, depending only on
$f$, such that
\begin{equation}\label{eq:profile-second-derivative-bound-spherical}
  |f''(r)|\leq Cr^{\beta-2},
  \qquad 0<r\leq R_0.
\end{equation}
\end{lemma}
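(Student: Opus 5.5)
Write $f(r)=r^\beta+g(r)$ and estimate the two pieces separately. The model term gives $|(r^\beta)''|=\beta|\beta-1|\,r^{\beta-2}$ exactly, so everything reduces to showing $|g''(r)|\le Cr^{\beta-2}$ on $(0,R_0]$. Away from the origin, on $[R_0/2,R_0]$ say, $f$ is $C^2$ up to this compact interval and $r^{\beta-2}$ is bounded below, so the bound is trivial there. The plan is therefore to control $g''$ near $0$ by Taylor's theorem, using the regularity and vanishing hypotheses on $g$.

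\emph{Supercritical case.} Here $g\in C^{m+1}([0,\pi/2))$ with $m=\lfloor\beta\rfloor$ and $g^{(j)}(0)=0$ for $j\le m$; also $g(0)=f(0)=0$.
\begin{itemize}
\item If $m\ge2$, apply Taylor's theorem to $g''$, whose derivatives of order $0,\dots,m-2$ vanish at $0$ and which is $C^{m-1}$. This gives $|g''(r)|\le Cr^{m-1}$, and since $m-1>\beta-2$ and $r\le R_0$, this is at most $Cr^{\beta-2}$.
\item If $m=1$ (so $1<\beta<2$ and $\beta-2<0$), then $g\in C^2$ already makes $g''$ bounded on $[0,R_0]$, and $r^{\beta-2}\ge R_0^{\beta-2}>0$ there.
\end{itemize}
Note $m\ge1$ always, since $\beta>1+k\alpha>1$.

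\emph{Critical case.} We have $g\in C^N$ with $N=\lceil\beta+\delta\rceil\ge2$ and $g(r)=O(r^{\beta+\delta})$. Since $g$ is $C^N$ and $g=o(r^{N-1})$ (because $\beta+\delta>N-1$), Taylor expansion forces $g^{(j)}(0)=0$ for $j\le N-1$. Taylor's theorem for $g''$ then gives $|g''(r)|\le Cr^{N-2}$. As $N-2>\beta+\delta-3$, this alone is not enough. I would refine it as follows: $N\ge\beta+\delta>\beta$ gives $N-2\ge\beta-2$ when $N-2\ge0$, and then $r^{N-2}\le C r^{\beta-2}$ on $(0,R_0]$ because $N-2\ge\beta-2$.

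The main (though mild) obstacle is bookkeeping of exponents, in particular that the vanishing of the derivatives of $g$ at $0$ follows from the $O$-condition in the critical case. If it were only needed that $|g''|\lesssim r^{\beta-2}$ for $\beta<2$, then boundedness of $g''$ would suffice. The convexity hypothesis on $f^{1/(1+k\alpha)}$ plays no role here. Combining the near-origin and compact-interval bounds gives the claim with $C$ depending only on $f$ and $R_0$.
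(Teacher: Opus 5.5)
Your proof is correct and follows the paper's route. You split $f=r^\beta+g$, apply Taylor's theorem to $g''$ using the vanishing derivatives of $g$ at $0$, and compare with $r^{\beta-2}$ on $(0,R_0]$ via $m-1>\beta-2$ in the supercritical case and $N>\beta$ in the critical case. In the critical case you also justify $g^{(j)}(0)=0$ for $j\le N-1$ from $g=O(r^{\beta+\delta})$ and $g\in C^N$, which the paper only asserts. One aside should go: the remark that $|g''|\le Cr^{N-2}$ ``alone is not enough'' is spurious, since $N>\beta$ makes it sufficient, as you then show yourself.
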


\begin{proof}
Write $f(r)=r^\beta+g(r)$.  In the supercritical case, let
$m=\lfloor\beta\rfloor$.  The vanishing of
$g^{(j)}(0)$ for $j=0,\ldots,m$, together with
$g\in C^{m+1}$, allows us to apply Taylor's theorem to $g''$ and gives
\[
  g''(r)=O(r^{m-1})=o(r^{\beta-2}).
\]

In the critical case, the assumptions of
Theorem~\ref{thm:main-critical}, with
$N=\lceil\beta+\delta\rceil$, give
\[
  g^{(j)}(0)=0,
  \qquad j=0,\ldots,N-1.
\]
Taylor's theorem applied to $g''$ therefore yields
\[
  g''(r)=O(r^{N-2})=o(r^{\beta-2}),
\]
where we used $N>\beta$.

Thus, in both cases,
\[
  f''(r)=\beta(\beta-1)r^{\beta-2}+o(r^{\beta-2})
  \qquad\text{as }r\downarrow0.
\]
The estimate near the origin, together with the smoothness of $f$ on
every compact subinterval of $(0,\pi/2)$, proves
\eqref{eq:profile-second-derivative-bound-spherical}.
\end{proof}

Now we prove that the normalized Weingarten map has a positive lower
bound, which implies that convexity is preserved along the flow.  Let
$\hat\kappa_1\leq\cdots\leq\hat\kappa_n$ denote the eigenvalues of
$\hat h$.  By Lemmas~\ref{lem:model-profile-comparison-spherical},
\ref{lem:profile-logarithmic-derivative-bound-spherical}, and
\ref{lem:profile-second-derivative-bound-spherical}, together with the
two-sided $C^0$ estimate, all the normalized coefficients involving
$f$ and its first two derivatives are uniformly bounded.  More
precisely,
\begin{equation}\label{eq:normalized-profile-coefficient-bounds}
  \begin{gathered}
  0<c\leq
  \lambda^\beta f\left(\frac{\widetilde\rho}{\lambda}\right)
  \leq C,\\
  \lambda^{\beta-1}
  \left|f'\left(\frac{\widetilde\rho}{\lambda}\right)\right|
  +\lambda^{\beta-2}
  \left|f''\left(\frac{\widetilde\rho}{\lambda}\right)\right|
  \leq C.
  \end{gathered}
\end{equation}
Since
$\widetilde\Phi=\lambda^\beta
f(\widetilde\rho/\lambda)\sigma_k^\alpha(\hat h)$,
the lower speed bound in
Lemma~\ref{lem:normalized-speed-lower-bound-spherical} and
\eqref{eq:normalized-profile-coefficient-bounds} imply
\begin{equation}\label{eq:sigma-k-lower-bound-before-C2}
  \sigma_k(\hat h)\geq c>0.
\end{equation}
\begin{lemma}[Convexity preservation]
\label{lem:convexity-preservation-spherical}
Assume that the conditions of either Theorem~\ref{thm:main} or
Theorem~\ref{thm:main-critical} hold.  On the maximal time interval on
which the solution is strictly convex, define
\[
  \Lambda:=\lambda_{\max}(\check h)
  =\frac{1}{\hat\kappa_1}.
\]
Then there exists a constant $C>0$, depending only on the initial hypersurface, but independent of the endpoint
of this interval, such that
\begin{equation}\label{eq:lemma54-Lambda-bound-spherical}
  \Lambda\leq C.
\end{equation}
Consequently, strict convexity is preserved throughout the maximal
existence interval.
\end{lemma}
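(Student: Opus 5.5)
We will apply the maximum principle to $\Lambda$, the largest eigenvalue of $\check h$, using the evolution equation of Lemma~\ref{lem:inverse-weingarten-evolution-spherical}. Since $\Lambda$ is only Lipschitz, we work in the barrier sense. Fix a time and a point where $\Lambda$ attains its spatial maximum, and choose a unit eigenvector $e_1$ (with respect to $\widetilde g$) of $\check h$ with eigenvalue $\Lambda$. Extend it to a smooth unit vector field near the point, for instance by parallel transport along radial geodesics and constant in time. Then $\phi:=\check h(e_1,e_1)$ is a smooth lower support function for $\Lambda$, with equality at the point, so it suffices to bound $\mathcal L\phi$ from above there. At the maximum point we have $\widetilde\nabla\phi=0$, which yields $\widetilde\nabla_a\hat h_{11}=0$ in the eigenframe. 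We can therefore discard, or rewrite, the mixed gradient terms involving $\widetilde\nabla\rho$ against $\widetilde\nabla_1\hat h_{pq}$.

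Next we sort the terms of \eqref{eq:full-inverse-weingarten-evolution-spherical} for $i=j=1$. The gradient terms are the first three lines together with the $f'$ cross terms. For $F=\sigma_k^\alpha$ on $\Gamma_n^+$ we write $F=(\sigma_k^{1/k})^{k\alpha}$ and use the inverse concavity of $\sigma_k^{1/k}$. This is the standard Andrews-type inequality: the sum of the $\ddot F$ term and the term $-2\dot F^{rs}\check h\nabla\hat h\check h\nabla\hat h\check h$ is bounded by $-\Lambda^{2}$ times a nonnegative multiple of $(\widetilde\nabla_1 F)^2/F$, with coefficient depending on $k\alpha$. Since $\widetilde\Phi=\lambda^\beta f F$ and $\widetilde\nabla_1\widetilde\Phi=\lambda^\beta f'F\widetilde\nabla_1\rho+\lambda^\beta f\widetilde\nabla_1F$, the remaining $f'$ cross term is expressed through $\widetilde\nabla_1F$. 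We absorb it by Cauchy--Schwarz into this good negative quadratic term. The error this produces is $C\Lambda^2\lambda^{2\beta-2}f'^2f^{-1}F|\widetilde\nabla\rho|^2$, which is bounded by Lemma~\ref{lem:normalized-C1-estimate-spherical} and \eqref{eq:normalized-profile-coefficient-bounds}.

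We expect the $f''$ and $\widetilde\nabla^2\rho$ terms to be the main obstacle. Here the hypothesis that $q=f^{1/(1+k\alpha)}$ is convex enters. Writing $f=q^{1+k\alpha}$ gives $f''=(1+k\alpha)q^{k\alpha}q''+(1+k\alpha)k\alpha q^{k\alpha-1}q'^2$. We expect the $q'^2$ part to combine exactly with the leftover from completing the square in the previous step, leaving $-\lambda^\beta(1+k\alpha)q^{k\alpha}q''\sigma_k^\alpha\Lambda^2(\widetilde\nabla_1\rho)^2\le 0$; the exponent $1+k\alpha$ is chosen precisely so that this works. For the Hessian of $\rho$ we use the radial graph identity. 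On the hypersurface, $\nabla^2\rho=\cot\rho\,(g-d\rho\otimes d\rho)-u\,h/\sin\rho$. After scaling, the $h$-part contributes $+\lambda^\beta f'F\,\widetilde u\,\Lambda/(\lambda\sin\rho)$-type terms, which are linear in $\Lambda$ with bounded coefficients. The $\cot\rho$-part contributes $-\lambda^{\beta}f'F\cot\rho\,\Lambda^2(\cdots)$, which has a sign since $f'\ge0$ by Lemma~\ref{lem:profile-logarithmic-derivative-bound-spherical}. If the signs work out less favourably than this, we will bound the term by $C\Lambda^2$ using $\lambda^{\beta-1}f'\le C$ and $\lambda\cot\rho\sim\widetilde\rho^{-1}$, and absorb it into the dominant term below.

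The zero-order terms are then as follows. The term $-\lambda^{\beta-2}f(1+k\alpha)\sigma_k^\alpha\Lambda^2$ is the coercive quadratic term coming from the ambient curvature. It is bounded below in size by $c\Lambda^2$, because $\lambda^{-2}\lambda^\beta f\sigma_k^\alpha=\lambda^{-2}\widetilde\Phi$. This looks degenerate as $\lambda\to\infty$, which is a concern to check. However, the term $+\lambda^{\beta-2}f\alpha\sigma_k^{\alpha-1}\dot\sigma_k^{pq}\widetilde g_{pq}\Lambda$ carries the same factor, and in the normalized picture the relevant comparison is with the $\cot\rho$ contributions, which are of order $\lambda\cot\rho$ and hence comparable to $1$. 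So we will instead use the $\cot\rho\,\Lambda^2$ term from $\nabla^2\rho$ together with $\sigma_k(\hat h)\ge c$ from \eqref{eq:sigma-k-lower-bound-before-C2} as the quadratic coercive term. The term $-\dot F^{pq}(\hat h^2)_{pq}\Lambda\le0$ can be dropped. The terms $\lambda^\beta f(k\alpha-1)\sigma_k^\alpha$ and $\gamma_0\Lambda$ are at most linear in $\Lambda$, since the eigenvalues of $\hat h$ are bounded below by $\Lambda^{-1}$ and $\sigma_k(\hat h)$ is controlled. This gives $\frac{d}{d\tau}\Lambda\le C(1+\Lambda)-c\Lambda^2$ in the barrier sense, and hence $\Lambda\le\max\{\Lambda(0),C'\}$. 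A bound on $\Lambda$ means $\hat\kappa_1\ge 1/C$, so the solution cannot lose strict convexity at a finite $\tau_*$, which gives the final assertion.
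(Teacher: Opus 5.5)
\textbf{There is a genuine gap, in the final step.}

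Much of your plan matches the paper: the eigenvector barrier, the inverse concavity of $\sigma_k^{1/k}$ applied to $F=G^{k\alpha}$, and completing the square with the sharp constant. So does the exact cancellation $f''-\frac{k\alpha}{1+k\alpha}\frac{(f')^2}{f}=(1+k\alpha)q^{k\alpha}q''\ge0$. So, finally, does using the radial-Hessian term $-\Lambda^2F\lambda^{\beta-2}f'\cot\rho\,(1-|\widetilde\nabla_1\widetilde\rho|^2)$ as the coercive term; note $\lambda^{-1}\cot\rho\sim\widetilde\rho^{-1}$, not $\lambda\cot\rho$.

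The problem is that your inequality $\frac{d}{d\tau}\Lambda\le C(1+\Lambda)-c\Lambda^2$ treats $F=\sigma_k^\alpha(\hat h)$ as bounded above. You call $\lambda^\beta f(k\alpha-1)\sigma_k^\alpha$ ``at most linear in $\Lambda$ since $\sigma_k(\hat h)$ is controlled''. You also call the $h$-part of the Hessian term ``linear in $\Lambda$ with bounded coefficients''. At this stage only $\sigma_k(\hat h)\ge c$ is known, from \eqref{eq:sigma-k-lower-bound-before-C2}. The bound $\hat\kappa_1\ge\Lambda^{-1}$ bounds $\sigma_k$ from below, not above. The upper speed bound in Lemma~\ref{lem:normalized-C2-estimate-spherical} is proved afterwards, using exactly the curvature lower bound you are trying to establish. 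Hence the terms $(k\alpha-1)\widetilde\Phi$ (positive when $k\alpha>1$) and $\frac{f'}{\lambda f}v^{-1}\widetilde\Phi\Lambda$ are not $O(1+\Lambda)$.

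The missing idea is to keep the speed as a common factor rather than replacing $F$ by its lower bound. Since $\lambda^\beta f\le C$, write the coercive term as $-c\widetilde\Phi\Lambda^2$. This gives $\mathcal L\Lambda\le -c\widetilde\Phi\Lambda^2+C\widetilde\Phi\Lambda+C\widetilde\Phi+\gamma_0\Lambda$. Then use the \emph{lower} speed bound of Lemma~\ref{lem:normalized-speed-lower-bound-spherical} to absorb $\gamma_0\Lambda\le C\widetilde\Phi\Lambda$. The result is $\mathcal L\Lambda\le\widetilde\Phi\,(-c\Lambda^2+C\Lambda+C)$, which is negative for large $\Lambda$ however large $\widetilde\Phi$ is.

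You also leave the positive term $\lambda^{\beta-2}f\alpha\sigma_k^{\alpha-1}\dot\sigma_k^{pq}\widetilde g_{pq}\Lambda$ unaccounted for. Here $\dot\sigma_k^{pq}\widetilde g_{pq}=(n-k+1)\sigma_{k-1}(\hat h)$ has no a priori bound. Its factor $\lambda^{-2}$ does not make it negligible, since $\lambda=1$ at $\tau=0$. So you cannot discard the ambient term as degenerate: its role is to cancel this term. By Euler's identity, $\alpha\sigma_k^{\alpha-1}\dot\sigma_k^{pq}\widetilde g_{pq}\le k\alpha F\Lambda$. Paired with $-\lambda^{\beta-2}f(1+k\alpha)F\Lambda^2$, the two terms together are $\le-\lambda^{\beta-2}fF\Lambda^2\le0$.

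Three smaller points:
\begin{itemize}
\item $\widetilde\nabla\phi=0$ gives only $\widetilde\nabla_a\hat h_{11}=0$. It does not remove $\dot\sigma_k^{pq}\widetilde\nabla_1\hat h_{pq}=\widetilde\nabla_1\sigma_k$, so the Cauchy--Schwarz step is needed.
\item The error from that step is a $\Lambda^2$ term, not a bounded one.
\item Your fallback of absorbing a $C\Lambda^2$ term would fail. The coercive coefficient multiplies $1-|\widetilde\nabla_1\widetilde\rho|^2$, which is only bounded below by $v^{-2}$, so the root-convexity cancellation is genuinely required.
\end{itemize}
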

\begin{proof}
\medskip
\noindent\emph{Barrier reduction.}
Since $\check h$ is self-adjoint with respect to $\widetilde g$, its
largest eigenvalue is given by the Rayleigh quotient
\begin{equation}\label{eq:inverse-test-quantity}
  \Lambda(x,\tau):=\lambda_{\max}(\check h)=\sup_{0\neq v\in T_x M}
    \frac{\widetilde g(\check h v,v)}
         {|v|_{\widetilde g}^{2}}.
\end{equation}
On the maximal strictly convex interval $[0,\tau_*)$,
\begin{equation}\label{eq:Lambda-and-minimum-curvature}
  \Lambda=\frac{1}{\hat\kappa_1}<\infty.
\end{equation}
Since $\Lambda$ is only locally Lipschitz, we derive its differential
inequality in the barrier sense.  We first recall the corresponding
definition of a barrier subsolution.

\medskip
\noindent\emph{Barrier convention.}
  Let $J$ be a time interval.  A continuous function $u$ defined on
  $M\times J$ satisfies
  \[
    \mathcal Lu
    \leq\mathcal F(\widetilde\nabla u,u,x,\tau)
  \]
  in the barrier sense if, for every $(x_0,\tau_0)\in M\times J$,
  there exist a neighbourhood $U$ of $x_0$, a number $\epsilon>0$,
  and a smooth function
  \[
    \psi\in C^\infty
    \bigl(U\times(\tau_0-\epsilon,\tau_0]\bigr)
  \]
  such that $\psi\leq u$ on $U\times(\tau_0-\epsilon,\tau_0]$, with
  equality at $(x_0,\tau_0)$, and
  \begin{equation}\label{eq:barrier-subsolution-definition}
      \mathcal L\psi(x_0,\tau_0)
      \leq
      \mathcal F\bigl(
        \widetilde\nabla\psi(x_0,\tau_0),
        \psi(x_0,\tau_0),x_0,\tau_0
      \bigr).
  \end{equation}

Fix an arbitrary point $(x_0,\tau_0)$ and choose normal coordinates
$(x^1,\ldots,x^n)$ for $\widetilde g(\tau_0)$, centred at $x_0$.
Choose the first coordinate direction so that, at $x_0$, $\p_{x^1}$ is
a unit eigenvector of $\check h$ corresponding to
$\Lambda(x_0,\tau_0)$.
Regard the coordinate vector field $\p_{x^1}$ as time-independent,
and define the smooth function
\begin{equation}\label{eq:calabi-support-function}
  \phi(x,\tau)
  :=\frac{
    \widetilde g
    \bigl(\check h(\p_{x^1}),\p_{x^1}\bigr)}{
    \widetilde g(\p_{x^1},\p_{x^1})}.
\end{equation}
By the variational characterization of the largest eigenvalue,
\begin{equation}\label{eq:calabi-support-comparison}
  \phi(x,\tau)\leq\Lambda(x,\tau),
  \qquad
  \phi(x_0,\tau_0)=\Lambda(x_0,\tau_0).
\end{equation}
All the following derivative computations are evaluated at
$(x_0,\tau_0)$.  At this point,
\begin{equation}\label{eq:calabi-contact-data}
  \widetilde g(\p_{x^1},\p_{x^1})=1,
  \qquad
  \check h(\p_{x^1})=\Lambda\p_{x^1},
  \qquad
  \widetilde\nabla\p_{x^1}=0,
  \qquad
  \partial_\tau\p_{x^1}=0.
\end{equation}

For the time derivative,
\begin{equation}\label{eq:calabi-time-derivative-proof}
  \begin{aligned}
    \partial_\tau\phi
    &=(\partial_\tau\widetilde g)
      \bigl(\check h(\p_{x^1}),\p_{x^1}\bigr)
      +\widetilde g
      \bigl((\partial_\tau\check h)(\p_{x^1}),
        \p_{x^1}\bigr)
      -\Lambda(\partial_\tau\widetilde g)
      \bigl(\p_{x^1},\p_{x^1}\bigr)\\
    &=\widetilde g
      \bigl((\partial_\tau\check h)(\p_{x^1}),
        \p_{x^1}\bigr)\\
        & =(\partial_\tau\check h)_1{}^1.
  \end{aligned}
\end{equation}

For the spatial Hessian, metric compatibility and
\eqref{eq:calabi-contact-data} give
\begin{equation*}
  \begin{aligned}
    \widetilde\nabla_p\widetilde\nabla_q
      \Bigl[
        \widetilde g
        \bigl(\check h(\p_{x^1}),\p_{x^1}\bigr)
      \Bigr]=&\widetilde g\bigl(
       (\widetilde\nabla_p\widetilde\nabla_q\check h)
       (\p_{x^1}),\p_{x^1}\bigr)+\widetilde g\bigl(
       \check h(\widetilde\nabla_p\widetilde\nabla_q\p_{x^1}),
       \p_{x^1}\bigr)\\
     &+\widetilde g\bigl(
       \check h(\p_{x^1}),
       \widetilde\nabla_p\widetilde\nabla_q\p_{x^1}\bigr),
       \end{aligned}
       \end{equation*}
       and
\begin{equation*}     
    \widetilde\nabla_p\widetilde\nabla_q
      \Bigl[
        \widetilde g(\p_{x^1},\p_{x^1})
      \Bigr]=\widetilde g\bigl(
       \widetilde\nabla_p\widetilde\nabla_q\p_{x^1},
       \p_{x^1}\bigr)+\widetilde g\bigl(
       \p_{x^1},
       \widetilde\nabla_p\widetilde\nabla_q\p_{x^1}\bigr).
\end{equation*}
Since $\check h$ is diagonal at $(x_0,\tau_0)$, we have
\begin{equation}\label{eq:calabi-cross-term-identity}
  \widetilde g\bigl(
    \check h(\widetilde\nabla_p\widetilde\nabla_q\p_{x^1}),
    \p_{x^1}\bigr)=
    \widetilde g\bigl(
      \check h(\p_{x^1}),
      \widetilde\nabla_p\widetilde\nabla_q\p_{x^1}
    \bigr)=
    \Lambda\widetilde g\bigl(
      \widetilde\nabla_p\widetilde\nabla_q\p_{x^1},
      \p_{x^1}
    \bigr).
\end{equation}
Hence,
\begin{equation}\label{eq:calabi-spatial-hessian-conclusion}
  \begin{aligned}
    \widetilde\nabla_p\widetilde\nabla_q\phi
    ={}&\widetilde\nabla_p\widetilde\nabla_q
      \Bigl[
        \widetilde g
        \bigl(\check h(\p_{x^1}),\p_{x^1}\bigr)
      \Bigr]\\
    &-\Lambda\widetilde\nabla_p\widetilde\nabla_q
      \Bigl[
        \widetilde g(\p_{x^1},\p_{x^1})
      \Bigr]\\
    ={}&\widetilde g\bigl(
       (\widetilde\nabla_p\widetilde\nabla_q\check h)
       (\p_{x^1}),\p_{x^1}\bigr)
     =(\widetilde\nabla_p\widetilde\nabla_q\check h)_1{}^1.
  \end{aligned}
\end{equation}
We have therefore proved, at $(x_0,\tau_0)$, that
\begin{equation}\label{eq:calabi-L-relation}
  \mathcal L\phi
  =(\mathcal L\check h)_1{}^1.
\end{equation}
\medskip
\noindent\emph{The component evolution inequality.}
In the following display, $f$, $f'$, and $f''$ are evaluated at
$\widetilde\rho/\lambda$.  Taking $i=j=1$ in
\eqref{eq:full-inverse-weingarten-evolution-spherical} gives
\begin{equation}\label{eq:inverse-weingarten-11-component}
  \begin{aligned}
  \mathcal L\check h_1{}^1
  ={}&-\Lambda^2\lambda^\beta f
    \alpha\sigma_k^{\alpha-1}\ddot\sigma_k^{pq,rs}
    \widetilde\nabla_1\hat h_{pq}
    \widetilde\nabla_1\hat h_{rs}\\
  &-\Lambda^2\lambda^\beta f
    \alpha(\alpha-1)\sigma_k^{\alpha-2}
    \dot\sigma_k^{pq}\dot\sigma_k^{rs}
    \widetilde\nabla_1\hat h_{pq}
    \widetilde\nabla_1\hat h_{rs}\\
    &-2\Lambda^2\lambda^\beta f
    \alpha\sigma_k^{\alpha-1}\dot\sigma_k^{rs}
    (\widetilde\nabla_r\hat h_1{}^b)\check h_b{}^p
    (\widetilde\nabla_s\hat h_p{}^1)\\
  &-2\Lambda^2\lambda^{\beta-1}f'
    \alpha\sigma_k^{\alpha-1}\dot\sigma_k^{pq}
    \widetilde\nabla_1\widetilde\rho\,
    \widetilde\nabla_1\hat h_{pq}\\
  &-\Lambda^2\lambda^{\beta-1}f'\sigma_k^\alpha
    \widetilde\nabla_1\widetilde\nabla_1\widetilde\rho-\Lambda^2\lambda^{\beta-2}f''\sigma_k^\alpha
    |\widetilde\nabla_1\widetilde\rho|^2\\
  &-\lambda^\beta f\alpha\sigma_k^{\alpha-1}
    \left(
      \dot\sigma_k^{pq}(\hat h^2)_{pq}
      -\lambda^{-2}\dot\sigma_k^{pq}\widetilde g_{pq}
    \right)\Lambda\\
  &+\lambda^\beta f(k\alpha-1)\sigma_k^\alpha
    -\lambda^{\beta-2}f(1+k\alpha)\sigma_k^\alpha\Lambda^2
    +\gamma_0\Lambda.
  \end{aligned}
\end{equation}
By \eqref{eq:calabi-L-relation}, every upper estimate derived below for
the right-hand side of
\eqref{eq:inverse-weingarten-11-component} is an upper barrier estimate
for $\mathcal L\Lambda$.  We shall use this interpretation without
further comment.

\medskip
\noindent\emph{The inverse-concavity terms.}
Put
\begin{equation}\label{eq:F-and-G}
  G:=\sigma_k^{1/k}(\hat h),
\end{equation}
then the standard inverse-concavity of the operator $\sigma_k^{\frac{1}{k}}$ \cite{And07} yields
\begin{equation}\label{eq:inverse-concavity-inequality}
    \ddot G^{pq,rs}\eta_{pq}\eta_{rs}
    +2\dot G^{ij}\check h^{pq}\eta_{ip}\eta_{jq}
    \geq
    2G^{-1}\bigl(\dot G^{pq}\eta_{pq}\bigr)^2.
\end{equation}
Using $F=G^{k\alpha}$, we derive
\begin{equation}\label{eq:inverse-concavity-for-F}
  \begin{aligned}
  &\ddot F^{pq,rs}\eta_{pq}\eta_{rs}
   +2\dot F^{ij}\check h^{pq}\eta_{ip}\eta_{jq}\\
  ={}&k\alpha G^{k\alpha-1}
    \left(
      \ddot G^{pq,rs}\eta_{pq}\eta_{rs}
      +2\dot G^{ij}\check h^{pq}\eta_{ip}\eta_{jq}
    \right)
    +k\alpha(k\alpha-1)G^{k\alpha-2}
      \bigl(\dot G^{pq}\eta_{pq}\bigr)^2\\
  \geq{}&k\alpha(k\alpha+1)G^{k\alpha-2}
      \bigl(\dot G^{pq}\eta_{pq}\bigr)^2.
  \end{aligned}
\end{equation}
Set
$\eta_{pq}=\widetilde\nabla_1\hat h_{pq}$. Then the first three gradient terms in
\eqref{eq:inverse-weingarten-11-component} are exactly the negative
factor $-\Lambda^2\lambda^\beta f(\ddot F^{pq,rs}\eta_{pq}\eta_{rs}
   +2\dot F^{ij}\check h^{pq}\eta_{ip}\eta_{jq})$.  Since $f>0$ and
$\dot G^{pq}\eta_{pq}=\widetilde\nabla_1G$, we conclude that
\begin{equation}\label{eq:inverse-gradient-quadratic-combination}
  \begin{aligned}
  &-\Lambda^2
  \lambda^\beta f
  \Bigl(
    \ddot F^{pq,rs}\eta_{pq}\eta_{rs}
    +2\dot F^{ij}\check h^{pq}\eta_{ip}\eta_{jq}
  \Bigr)\\
  &\quad\leq
  -\Lambda^2
  \lambda^\beta f
  k\alpha(k\alpha+1)G^{k\alpha-2}
  |\widetilde\nabla_1G|^2.
  \end{aligned}
\end{equation}

\medskip
\noindent\emph{The $\widetilde\rho$-derivative terms.}

Here and below $f$, $f'$, and $f''$ are evaluated at
$\widetilde\rho/\lambda$.  The mixed radial term in
\eqref{eq:inverse-weingarten-11-component} becomes
\begin{equation}\label{eq:mixed-radial-gradient-term}
  -2\Lambda^2\lambda^{\beta-1}f'
  \widetilde\nabla_1\widetilde\rho\,
  \widetilde\nabla_1F
  =-2\Lambda^2\lambda^{\beta-1}f'
  k\alpha G^{k\alpha-1}
  \widetilde\nabla_1\widetilde\rho\,
  \widetilde\nabla_1G.
\end{equation}
Combining \eqref{eq:inverse-gradient-quadratic-combination} and
\eqref{eq:mixed-radial-gradient-term}, and completing the square, yields
\begin{equation}\label{eq:completed-square-gradient-bound}
  \begin{aligned}
  &-\Lambda^2k\alpha\lambda^\beta fG^{k\alpha-2}
  \Bigg[
    (k\alpha+1)|\widetilde\nabla_1G|^2
    +2\frac{f'}{\lambda f}G
      \widetilde\nabla_1\widetilde\rho\,
      \widetilde\nabla_1G
  \Bigg]\\
  &\quad\leq
  \Lambda^2\frac{k\alpha}{k\alpha+1}
  \lambda^{\beta-2}\frac{(f')^2}{f}
  F|\widetilde\nabla_1\widetilde\rho|^2.
  \end{aligned}
\end{equation}
Recall
\begin{equation}\label{eq:unscaled-radial-hessian}
  \nabla_i\nabla_j\rho
  =\cot\rho\,
    \bigl(
      g_{ij}-\nabla_i\rho\,\nabla_j\rho
    \bigr)
   -v^{-1}h_{ij}.
\end{equation}
Because $\lambda$ is spatially constant, $\widetilde\nabla=\nabla$ and
\begin{equation*}
  \widetilde\nabla_i\widetilde\nabla_j\widetilde\rho
  =\lambda\nabla_i\nabla_j\rho.
\end{equation*}
Using
\begin{equation*}
  \rho=\frac{\widetilde\rho}{\lambda},
  \qquad
  g_{ij}=\lambda^{-2}\widetilde g_{ij},
  \qquad
  \nabla_i\rho
    =\lambda^{-1}\widetilde\nabla_i\widetilde\rho,
  \qquad
  h_{ij}=\lambda^{-1}\hat h_{ij},
\end{equation*}
we obtain from \eqref{eq:unscaled-radial-hessian}
\begin{equation}\label{eq:scaled-radial-hessian}
  \widetilde\nabla_i\widetilde\nabla_j\widetilde\rho
  =\lambda^{-1}\cot
    \left(\frac{\widetilde\rho}{\lambda}\right)
    \left(
      \widetilde g_{ij}
      -\widetilde\nabla_i\widetilde\rho\,
       \widetilde\nabla_j\widetilde\rho
    \right)
   -v^{-1}\hat h_{ij}.
\end{equation}
Consequently, the two purely radial second-order terms in
\eqref{eq:inverse-weingarten-11-component} equal
\begin{equation}\label{eq:pure-radial-terms-at-maximum}
  \begin{aligned}
  &-\Lambda^2F\Bigg[
   \lambda^{\beta-2}f''
     |\widetilde\nabla_1\widetilde\rho|^2
   +\lambda^{\beta-2}f'\cot
     \left(\frac{\widetilde\rho}{\lambda}\right)
     \left(1-|\widetilde\nabla_1\widetilde\rho|^2\right)
  \Bigg]
  +\Lambda\lambda^{\beta-1}f'v^{-1}F.
  \end{aligned}
\end{equation}
Combining \eqref{eq:completed-square-gradient-bound} and
\eqref{eq:pure-radial-terms-at-maximum}, we find that all the terms
involving derivatives of $\widetilde\rho$, together with the negative
term retained from \eqref{eq:inverse-gradient-quadratic-combination}, are
bounded above by
\begin{equation}\label{eq:all-radial-derivative-terms-bound}
  \begin{aligned}
  &-\Lambda^2F\Bigg[
    \lambda^{\beta-2}f'
      \cot\left(\frac{\widetilde\rho}{\lambda}\right)
      \left(1-|\widetilde\nabla_1\widetilde\rho|^2\right)\\
  &\hspace{23mm}
    +\lambda^{\beta-2}
      \left(
        f''-\frac{k\alpha}{k\alpha+1}\frac{(f')^2}{f}
      \right)
      |\widetilde\nabla_1\widetilde\rho|^2
  \Bigg]
  +C\widetilde\Phi\Lambda,
  \end{aligned}
\end{equation}
where we used the fact that
\[
  \Lambda\lambda^{\beta-1}f'v^{-1}F
  =\Lambda\frac{f'}{\lambda f}v^{-1}\widetilde\Phi
  \leq C\widetilde\Phi\Lambda.
\]

\medskip
\noindent\emph{The zero-order curvature terms.}

It remains to control the zero-order curvature terms.  At $i=j=1$ they
are
\begin{equation}\label{eq:inverse-zero-order-terms}
  \begin{aligned}
  &-\lambda^\beta f\,
  \alpha\sigma_k^{\alpha-1}
  \left(
    \dot\sigma_k^{pq}(\hat h^2)_{pq}
    -\lambda^{-2}\dot\sigma_k^{pq}\widetilde g_{pq}
  \right)\Lambda\\
  &\quad+\lambda^\beta f(k\alpha-1)F
  -\lambda^{\beta-2}f(1+k\alpha)F\Lambda^2
  +\gamma_0\Lambda.
  \end{aligned}
\end{equation}
The first term in the first line is nonpositive.  Since every principal
curvature is positive on $[0,\tau_*)$, Euler's identity gives
\begin{equation}\label{eq:spherical-dot-F-trace-bound}
  k\alpha F
  =\alpha\sigma_k^{\alpha-1}
    \dot\sigma_k^{pq}\hat h_{pq}
  \geq
  \Lambda^{-1}\alpha\sigma_k^{\alpha-1}
    \dot\sigma_k^{pq}\widetilde g_{pq}.
\end{equation}
Consequently, the two ambient-curvature terms satisfy
\begin{equation}\label{eq:spherical-ambient-curvature-good-sign}
  \begin{aligned}
  &\lambda^{\beta-2}f\alpha\sigma_k^{\alpha-1}
    \dot\sigma_k^{pq}\widetilde g_{pq}\Lambda
  -\lambda^{\beta-2}f(1+k\alpha)F\Lambda^2\\
  &\qquad\leq-\lambda^{\beta-2}fF\Lambda^2\leq0.
  \end{aligned}
\end{equation}
Therefore,
\begin{equation}\label{eq:inverse-zero-order-bound}
  \text{the expression in \eqref{eq:inverse-zero-order-terms}}
  \leq C\widetilde\Phi+C\Lambda.
\end{equation}

Combining \eqref{eq:all-radial-derivative-terms-bound} and
\eqref{eq:inverse-zero-order-bound}, the maximum-point inequality becomes
\begin{equation}\label{eq:Lambda-key-coefficient}
  \begin{aligned}
  \mathcal L\Lambda
  \leq{}&C\widetilde\Phi\Lambda+C\widetilde\Phi+C\Lambda\\
  &-\Lambda^2F\Bigg[
   \lambda^{\beta-2}f'
     \cot\left(\frac{\widetilde\rho}{\lambda}\right)
     \left(1-|\widetilde\nabla_1\widetilde\rho|^2\right)\\
  &\hspace{20mm}
   +\lambda^{\beta-2}
    \left(
      f''-\frac{k\alpha}{k\alpha+1}\frac{(f')^2}{f}
    \right)
    |\widetilde\nabla_1\widetilde\rho|^2
  \Bigg].
  \end{aligned}
\end{equation}

The gradient estimate gives
\begin{equation}\label{eq:radial-gradient-good-factor}
  1-|\widetilde\nabla_1\widetilde\rho|^2
  \geq1-|\widetilde\nabla^M\widetilde\rho|^2
  =v^{-2}\geq c>0.
\end{equation}

\medskip
\noindent\emph{Coercivity and the maximum principle.}
Choose $R\in(0,\pi/2)$ so that
$0<r=\widetilde\rho/\lambda\leq R$ on $[0,\tau_*)$.  Lemmas
\ref{lem:model-profile-comparison-spherical},
\ref{lem:profile-logarithmic-derivative-bound-spherical}, and
\ref{lem:profile-second-derivative-bound-spherical} imply that, for
every $r\in(0,R]$,
\begin{equation}\label{eq:general-f-structural-growth}
  \begin{aligned}
  c_f r^\beta&\leq f(r)\leq C_f r^\beta,\\
  c_f r^{\beta-1}&\leq f'(r)\leq C_f r^{\beta-1},\\
  |f''(r)|&\leq C_f r^{\beta-2}.
  \end{aligned}
\end{equation}
The root-convexity assumption in the corresponding theorem gives
\begin{equation}\label{eq:general-f-root-convexity}
  \left(f^{\frac{1}{1+k\alpha}}\right)''\geq0.
\end{equation}
The latter condition is equivalent to
\begin{equation}\label{eq:general-f-inverse-concavity-combination}
  f''-\frac{k\alpha}{1+k\alpha}\frac{(f')^2}{f}
  =(1+k\alpha)f^{\frac{k\alpha}{1+k\alpha}}
    \left(f^{\frac{1}{1+k\alpha}}\right)''
  \geq0.
\end{equation}

Set $q=|\widetilde\nabla_1\widetilde\rho|^2$.  By
\eqref{eq:radial-gradient-good-factor}, there is a fixed
$\vartheta>0$ such that $1-q\geq\vartheta$.  Therefore the bracket in
\eqref{eq:Lambda-key-coefficient} satisfies
\begin{equation}\label{eq:general-f-uniform-bracket}
  \begin{aligned}
  &\lambda^{\beta-2}\left[
    f'(r)\cot r\,(1-q)
    +\left(
      f''-\frac{k\alpha}{1+k\alpha}\frac{(f')^2}{f}
     \right)q
  \right]\\
  &\quad\geq
  \vartheta\lambda^{\beta-2}f'(r)\cot r\\
  &\quad\geq
  c\lambda^{\beta-2}r^{\beta-1}\cot r\\
  &\quad\geq
  c\lambda^{\beta-2}r^{\beta-2}
  =c\widetilde\rho^{\beta-2}
  \geq c.
  \end{aligned}
\end{equation}
Here we used the positive lower bound
$r\cot r\geq R\cot R>0$ on $(0,R]$ and the two-sided $C^0$ bound for
$\widetilde\rho$.  Since
\[
  \widetilde\Phi=\lambda^\beta f(r)F
  \quad\text{and}\quad
  \lambda^\beta f(r)\leq C,
\]
the negative quadratic term in
\eqref{eq:Lambda-key-coefficient} is bounded above by
$-c\widetilde\Phi\Lambda^2$.  We therefore obtain
\begin{equation}\label{eq:Lambda-general-profile-all-time-inequality}
  \mathcal L\Lambda
  \leq-c\widetilde\Phi\Lambda^2
       +C\widetilde\Phi\Lambda
       +C\widetilde\Phi+C\Lambda.
\end{equation}
This is the point at which the positive lower speed bound, rather than
an upper speed bound, is used.  By
\eqref{eq:normalized-speed-lower-bound-spherical},
\[
  C\Lambda
  \leq\frac{C}{c_{\widetilde\Phi}}
       \widetilde\Phi\Lambda.
\]
After changing $C$, \eqref{eq:Lambda-general-profile-all-time-inequality}
becomes
\begin{equation}\label{eq:Lambda-factorized-by-speed-spherical}
  \mathcal L\Lambda
  \leq\widetilde\Phi
       \left(-c\Lambda^2+C\Lambda+C\right).
\end{equation}
Thus the right-hand side is negative whenever $\Lambda$ exceeds a
fixed constant; no upper bound for $\widetilde\Phi$ is needed in this
argument.  The maximum principle now gives
\begin{equation}\label{eq:minimum-curvature-bound}
  \Lambda\leq C_*,
  \qquad
  \hat\kappa_i\geq C_*^{-1}
  \qquad\text{on }\mathbb S^n\times[0,\tau_*),
\end{equation}
where $C_*$ is independent of $\tau_*$ and of the
strict-convexity margin.

The bound in \eqref{eq:minimum-curvature-bound} closes the
convexity continuation argument directly.  Indeed, if \(\tau_*\) were
a finite endpoint at which strict convexity were lost, then
\(\hat\kappa_1\to0\) and hence \(\Lambda\to\infty\), contradicting
\eqref{eq:minimum-curvature-bound}.  Therefore \(\tau_*\) coincides
with the endpoint of the maximal existence interval, and strict
convexity is preserved throughout that interval.
\end{proof}

\begin{lemma}[The normalized $C^2$ estimate]
\label{lem:normalized-C2-estimate-spherical}
Assume that the conditions of either Theorem~\ref{thm:main} or
Theorem~\ref{thm:main-critical} hold. Then there exists a constant
$C>0$, depending only on the initial hypersurface, such that
\begin{equation}\label{eq:lemma54-full-C2-conclusions-spherical}
  \widetilde\Phi\leq C,
  \qquad
  C^{-1}\leq\hat\kappa_i\leq C,
  \qquad
  |\nabla_\sigma^2\widetilde\rho|_\sigma\leq C.
\end{equation}
These estimates hold throughout the maximal existence interval.
\end{lemma}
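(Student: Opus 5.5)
The plan is to prove the upper bound $\widetilde\Phi\le C$ first, by the maximum principle applied to a Tso-type quotient of the speed and the normalized support function. The curvature bounds and the bound on $\nabla^2_\sigma\widetilde\rho$ then follow algebraically.

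\emph{Speed bound.} I would consider
\[
  W:=\frac{\widetilde\Phi}{\widetilde u-\varepsilon},\qquad \varepsilon:=\tfrac12\inf\widetilde u>0,
\]
where $\inf\widetilde u\ge c$ by Lemma~\ref{lem:normalized-C1-estimate-spherical}. Combining Lemmas~\ref{lem:basic-normalized-speed-evolution-spherical} and~\ref{lem:basic-normalized-support-evolution-spherical}, at a spatial maximum of $W$ the gradient terms cancel in the usual way, since $\widetilde\nabla\widetilde\Phi=W\widetilde\nabla\widetilde u$ there. This gives
\[
  \mathcal L W\le \frac{1}{\widetilde u-\varepsilon}\Bigl[\mathcal L\widetilde\Phi - W\mathcal L\widetilde u\Bigr].
\]
The terms $\lambda^\beta f\widetilde\Phi\dot F^{ij}(\hat h^2)_{ij}$ cancel up to the factor $1-\widetilde u/(\widetilde u-\varepsilon)=-\varepsilon/(\widetilde u-\varepsilon)$. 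They therefore produce the good term $-\varepsilon\lambda^\beta f W\dot F^{ij}(\hat h^2)_{ij}/(\widetilde u-\varepsilon)$. The remaining sources are handled as follows.
\begin{itemize}
\item The term $-f'\widetilde\Phi^2/(\lambda f v)$ is nonpositive by Lemma~\ref{lem:profile-logarithmic-derivative-bound-spherical}.
\item The term $+(1+k\alpha)\cos\rho\,W\widetilde\Phi$ enters with a helpful sign after the subtraction of $W\mathcal L\widetilde u$.
\item The terms $\gamma_0$, $(\beta-k\alpha)\gamma_0$ and $\lambda^\beta f'F\sin\rho$ are bounded by $CW$, using $\lambda^{\beta-1}f'\le C$, $\lambda\sin\rho\le C$ and \eqref{eq:normalized-profile-coefficient-bounds}.
\item The ambient term $\lambda^{\beta-2}f\widetilde\Phi\dot F^{ij}\widetilde g_{ij}$ is bounded using $\hat\kappa_i\ge C_*^{-1}$ from Lemma~\ref{lem:convexity-preservation-spherical}, which gives $\dot F^{ij}\widetilde g_{ij}\le C\dot F^{ij}(\hat h^2)_{ij}$. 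It is then absorbed by the good term, or is simply $O(W)$ since $\lambda^{-2}\to0$.
\end{itemize}
For the coercive term, note that $\dot F^{ij}(\hat h^2)_{ij}\ge c\,F^{1+1/(k\alpha)}$, with $c$ depending only on $n,k$, by Newton--Maclaurin for $\sigma_k$ on $\Gamma_n^+$. Hence $\dot F^{ij}(\hat h^2)_{ij}\ge cW^{1+1/(k\alpha)}$, using the bounds on $\lambda^\beta f$ and on $\widetilde u$. This yields
\[
  \frac{d}{d\tau}W_{\max}\le -cW_{\max}^{2+1/(k\alpha)}+CW_{\max}^2+CW_{\max},
\]
and so $W\le C$, hence $\widetilde\Phi\le C$.

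\emph{Curvature bounds.} From $\widetilde\Phi\le C$ and $\lambda^\beta f\ge c$ we get $\sigma_k(\hat h)\le C$. Together with $\hat\kappa_i\ge C_*^{-1}$, the expansion $\sigma_k\ge \hat\kappa_n\hat\kappa_1^{k-1}$ (for $k\ge1$) gives $\hat\kappa_n\le C$. For the last estimate, I would rewrite \eqref{eq:weingarten-map-rho} in normalized variables. Since $\lambda$ is spatially constant, $\lambda^{-1}h_i{}^j$ is expressed through $\nabla^2\widetilde\rho$, $\nabla\widetilde\rho$, $v$ and $\lambda\sin(\widetilde\rho/\lambda)$; the factors $\lambda\sin\rho$ and $\lambda^{-1}\cot\rho\cdot\lambda^2\sin^2\rho\sim\lambda\sin\rho$ are bounded by Corollary~\ref{cor:lambda-sin-rho-two-sided-spherical}. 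Solving for $\nabla^2_\sigma\widetilde\rho$, the correction term $\rho^j\rho^k\rho_{ki}/(\sin^2\rho\,v^2)$ is invertible, because $\mathrm{Id}-\frac{\nabla\rho\otimes\nabla\rho}{\sin^2\rho\,v^2}$ has eigenvalues in $[v^{-2},1]$. Hence $|\nabla^2_\sigma\widetilde\rho|\le C$, by $v\le C$ and $|\nabla\widetilde\rho|\le C$.

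The main obstacle is the speed bound. One must check that the signs of the $\cos\rho\,\widetilde\Phi$ and $f'$ terms cooperate after the subtraction, and that the ambient term $\lambda^{\beta-2}$ is harmless; in the critical case $\lambda^{-2}=e^{-2\gamma_0\tau}$. If the ratio trick produces an uncontrollable term, I would instead use $\log\widetilde\Phi-A\log(\widetilde u-\varepsilon)$ with a large $A$.
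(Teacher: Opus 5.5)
Your proposal is correct and is essentially the paper's argument. The paper applies the maximum principle to $Q=\log\widetilde\Phi-\log(\widetilde u-b)$ with $b=\tfrac12\inf\widetilde u$, which is the logarithm of your $W$. It uses the same Newton--Maclaurin coercivity $\lambda^\beta f\dot F^{ij}(\hat h^2)_{ij}\ge c\,\widetilde\Phi^{1+1/(k\alpha)}$, and bounds the ambient term via Euler's identity and Lemma~\ref{lem:convexity-preservation-spherical}. It then recovers the curvature and Hessian bounds exactly as you do.

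The only slips are in your term-by-term bookkeeping. After subtracting $W\mathcal L\widetilde u$, the term $(1+k\alpha)\cos\rho\,W^2$ is unfavourable rather than helpful. The ambient term is $O(W^2)$ rather than $O(W)$, using Euler's identity and $\lambda\ge1$. Both are already covered by the $CW_{\max}^2$ term in your final inequality, so the argument closes.
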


\begin{proof}
By Lemma~\ref{lem:convexity-preservation-spherical}, the solution
remains strictly convex and
\begin{equation}\label{eq:C2-minimum-curvature-bound-spherical}
  \hat\kappa_i\geq c>0.
\end{equation}
It remains first to derive the upper bound for the normalized speed.
The $C^1$ estimate gives two-sided positive bounds for the normalized
support function $\widetilde u$.  Set
\begin{equation}\label{eq:C2-speed-test-function-spherical}
  b:=\frac12\inf_{\mathbb S^n\times[0,T)}\widetilde u,
  \qquad
  z:=\widetilde u-b,
  \qquad
  Q:=\log\widetilde\Phi-\log z,
\end{equation}
where $[0,T)$ is the maximal existence interval.  Thus $z$ is bounded
above and below by positive constants.

At a spatial maximum of $Q$, one has
$\widetilde\nabla\log\widetilde\Phi=\widetilde\nabla\log z$;
hence the gradient-square terms in $\mathcal LQ$ cancel.  Using
\eqref{eq:basic-normalized-speed-evolution-spherical} and
\eqref{eq:basic-normalized-support-evolution-spherical}, we obtain
\begin{equation}\label{eq:C2-speed-quotient-evolution-spherical}
  \begin{aligned}
  \mathcal LQ
  ={}& (\beta-k\alpha)\gamma_0
       -\gamma_0\frac{\widetilde u}{z}
       -\frac{f'(\rho)}{\lambda f(\rho)v}\widetilde\Phi
       +\frac{(1+k\alpha)\cos\rho}{z}\widetilde\Phi\\
    &-\frac{b}{z}\lambda^\beta f(\rho)
       \dot F^{ij}(\hat h^2)_{ij}
       +\lambda^{\beta-2}f(\rho)
       \dot F^{ij}\widetilde g_{ij}\\
    &-\frac{\lambda^\beta f'(\rho)F\sin\rho}{z}
       (1-v^{-2}).
  \end{aligned}
\end{equation}
Since $f'>0$ and $v\geq1$, the last term and the third term on the
right-hand side are nonpositive.  The remaining bounded coefficients
therefore give
\begin{equation}\label{eq:C2-speed-quotient-upper-bound-spherical}
  \mathcal LQ
  \leq C+C\widetilde\Phi
       -\frac{b}{z}\lambda^\beta f(\rho)
        \dot F^{ij}(\hat h^2)_{ij}
       +\lambda^{\beta-2}f(\rho)
        \dot F^{ij}\widetilde g_{ij}.
\end{equation}

The Newton--Maclaurin inequalities, the homogeneity of
$F=\sigma_k^\alpha$, and
\eqref{eq:normalized-profile-coefficient-bounds} imply
\begin{equation}\label{eq:C2-speed-coercive-term-spherical}
  \lambda^\beta f(\rho)
  \dot F^{ij}(\hat h^2)_{ij}
  \geq c\widetilde\Phi^{1+\frac1{k\alpha}}.
\end{equation}
On the other hand, Euler's identity and
\eqref{eq:C2-minimum-curvature-bound-spherical} give
\[
  k\alpha F=\dot F^{ij}\hat h_{ij}
  \geq c\dot F^{ij}\widetilde g_{ij},
\]
and hence
\begin{equation}\label{eq:C2-spherical-trace-term-bound}
  \lambda^{\beta-2}f(\rho)
  \dot F^{ij}\widetilde g_{ij}
  \leq C\lambda^{-2}\widetilde\Phi
  \leq C\widetilde\Phi.
\end{equation}
It follows from
\eqref{eq:C2-speed-quotient-upper-bound-spherical}--
\eqref{eq:C2-spherical-trace-term-bound} that, at a spatial maximum of
$Q$,
\[
  \mathcal LQ
  \leq C+C\widetilde\Phi
       -c\widetilde\Phi^{1+\frac1{k\alpha}}.
\]
The maximum principle and the two-sided bounds for $z$ yield
\begin{equation}\label{eq:C2-normalized-speed-upper-bound-spherical}
  \widetilde\Phi\leq C.
\end{equation}

Combining this estimate with the lower speed bound and
\eqref{eq:normalized-profile-coefficient-bounds}, we obtain
\begin{equation}\label{eq:C2-sigma-k-two-sided-spherical}
  0<c\leq\sigma_k(\hat h)\leq C.
\end{equation}
Because every $\hat\kappa_i$ is bounded below by a positive constant,
the upper bound for $\sigma_k(\hat h)$ bounds each $\hat\kappa_i$ from
above.  Therefore
\begin{equation}\label{eq:C2-principal-curvature-two-sided-spherical}
  C^{-1}\leq\hat\kappa_i\leq C.
\end{equation}
Finally, we estimate the Hessian on the parameter sphere
$(\mathbb S^n,\sigma)$.  In \eqref{eq:weingarten-map-rho}, the terms
containing the spherical Hessian of $\rho$ can be written as
\[
  -\left(
    \sigma^{jp}-\frac{\rho^j\rho^p}{\sin^2\rho\,v^2}
   \right)\rho_{pi}.
\]
Since $\rho=\widetilde\rho/\lambda$ and
$\hat h_i{}^j=\lambda^{-1}h_i{}^j$, equation
\eqref{eq:weingarten-map-rho} is therefore equivalent to
\begin{equation}\label{eq:C2-spherical-Hessian-from-Weingarten}
  \begin{aligned}
  \left(
    \sigma^{jp}
    -\frac{\widetilde\rho^j\widetilde\rho^p}
      {\lambda^2\sin^2\rho\,v^2}
  \right)\widetilde\rho_{pi}
  ={}&\lambda\sin\rho\cos\rho\,\delta_i{}^j\\
  &+\frac{\cot\rho}{\lambda v^2}
      \widetilde\rho_i\widetilde\rho^j
    -\lambda^2v\sin^2\rho\,\hat h_i{}^j.
  \end{aligned}
\end{equation}
Here all derivatives are taken with respect to $\sigma$.  The
coefficient matrix on the left-hand side has eigenvalues $1$ in the
directions orthogonal to $\nabla\widetilde\rho$ and $v^{-2}$ in its
gradient direction.  It and its inverse are therefore uniformly
bounded by the $C^1$ estimate.  Moreover, the $C^0$ estimate gives
two-sided bounds for $\lambda\sin\rho$ and a bound for
$\lambda^{-1}\cot\rho$, while
\eqref{eq:C2-principal-curvature-two-sided-spherical} bounds
$\hat h_i{}^j$.  Every term on the right-hand side of
\eqref{eq:C2-spherical-Hessian-from-Weingarten} is consequently
uniformly bounded.  Hence
\[
  |\nabla_\sigma^2\widetilde\rho|_\sigma\leq C.
\]
This proves \eqref{eq:lemma54-full-C2-conclusions-spherical}.
\end{proof}

\section{Exponential decay}

\begin{lemma}[Exponential decay of the normalized gradient]
\label{lem:exponential-gradient-decay-spherical}
Assume that the conditions of either Theorem~\ref{thm:main} or
Theorem~\ref{thm:main-critical} hold.  Then there exist constants
$C,c>0$ such that
\begin{equation}\label{eq:exponential-gradient-decay-spherical}
  |\nabla^\sigma\widetilde\rho(\cdot,\tau)|_\sigma
  \leq Ce^{-c\tau}
  \qquad\text{for all }\tau\geq0.
\end{equation}
\end{lemma}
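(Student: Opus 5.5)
We propose a maximum-principle argument applied to the quantity
\[
  \omega:=\tfrac12|\nabla^\sigma\log\widetilde\rho|_\sigma^2
  =\tfrac12|\nabla^\sigma\log\rho|_\sigma^2 ,
\]
which is invariant under the normalization, since $\lambda$ is spatially constant. The first step is to rewrite the scalar equation \eqref{eq:normalized-radial-flow-hat-h} in terms of $\varphi:=\log\widetilde\rho$, using the radial graph formula \eqref{eq:weingarten-map-rho} for $\hat h$. This gives a fully nonlinear parabolic equation
\[
  \partial_\tau\varphi=\gamma_0-\mathcal G(\nabla^2\varphi,\nabla\varphi,\varphi,\lambda),
\]
where the dependence on $\lambda$ enters only through the smooth functions $\lambda^\beta f(\widetilde\rho/\lambda)$, $\lambda\sin(\widetilde\rho/\lambda)$ and $\lambda^{-1}\cot(\widetilde\rho/\lambda)$ of $\widetilde\rho$ and $\lambda$. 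By Lemmas \ref{lem:normalized-C0-estimate-spherical}, \ref{lem:normalized-C1-estimate-spherical} and \ref{lem:normalized-C2-estimate-spherical}, this equation is uniformly parabolic along the solution, with bounded coefficients, for all $\tau\ge0$.

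Next, we differentiate in the direction $\nabla\varphi$ and evaluate at a spatial maximum of $\omega$. There $\nabla\omega=0$, which gives $\varphi_{ij}\varphi_j=0$, and we have $\nabla^2\omega\le0$. In normal coordinates on $\mathbb S^n$, the second-order terms produce $-\mathcal G^{ij}\varphi_{ki}\varphi_{kj}\le0$. The Ricci term of $\sigma$ produces $-\mathcal G^{ij}(\delta_{ij}|\nabla\varphi|^2-\varphi_i\varphi_j)$, which is $\le -c\,\omega$ by uniform ellipticity whenever $n\ge1$. The zeroth-order dependence, namely the $\partial_\varphi\mathcal G$ term multiplying $|\nabla\varphi|^2$, needs a sign. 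Heuristically, enlarging the sphere decreases the normalized curvature $\cot(\widetilde\rho/\lambda)/\lambda$ while increasing $\lambda^\beta f$. The combined speed term behaves like $\widetilde\rho^{\beta-1-k\alpha}$ times $A$-type corrections. It is therefore nondecreasing in $\widetilde\rho$ up to an error $O(\lambda^{-\delta_0})$ (critical case) or $O(\lambda^{-2})$ (supercritical case), since $r\cot r=1+O(r^2)$ and $E(r)=O(r^{\delta_0})$ by Lemma \ref{lem:critical-profile-spherical}. The gradient-dependent terms arising from $v$ and from $\nabla\varphi$ are at least quadratic, and they vanish with the required homogeneity at $\nabla\varphi=0$, because the equation is rotation-invariant and spheres are solutions of the model problem. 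Altogether we expect
\[
  \frac{d}{d\tau}\omega_{\max}\le -c\,\omega_{\max}+C\lambda^{-\epsilon}\omega_{\max}
\]
with $\lambda=e^{\gamma_0\tau}$. Since the error coefficient is integrable, this yields $\omega_{\max}\le Ce^{-c\tau}$, and hence \eqref{eq:exponential-gradient-decay-spherical} follows from the $C^0$ bound.

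The main obstacle is the sign and size of the first-order and zeroth-order coefficients after differentiation. In the supercritical case, the term $\partial_{\widetilde\rho}$ of $\widetilde\rho^{\beta-1-k\alpha}$ has the good sign and even helps. In the critical case, however, it is only a decaying perturbation, so the coercivity must come entirely from the curvature of the sphere $\mathbb S^n$. This is presumably why the critical theorem assumes $n\ge2$, and I would check carefully that the Ricci term indeed dominates there. If the direct computation of $\omega$ becomes unmanageable, I would instead work with the geometric quantity $\widetilde u$, or with $\widetilde\rho/\widetilde u=\lambda\sin\rho\,v/(\lambda\sin\rho)\cdot\widetilde\rho/\lambda\sin\rho$. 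The evolution \eqref{eq:basic-normalized-support-evolution-spherical} then gives a maximum principle for $v^2-1$. In that approach, the term $\lambda^\beta f'F\sin\rho(1-v^{-2})$ must be weighed against the coercive term $\widetilde u\,\dot F^{ij}(\hat h^2)_{ij}$, using the pinching bounds $C^{-1}\le\hat\kappa_i\le C$.
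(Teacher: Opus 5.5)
Your strategy is essentially the paper's: a maximum principle for half the squared $\sigma$-gradient of a logarithmic radial variable. At the maximum, $\varphi^i\varphi_{ij}=0$ kills the first-order terms, and the commutator term of $\mathbb S^n$ supplies coercivity. The paper uses $\varphi=\log\tan(\rho/2)$, whose gradient is $\frac{\rho}{\sin\rho}\nabla\log\widetilde\rho$, instead of $\log\widetilde\rho$. However, the two steps that carry the proof are left as expectations, and one claim is false.

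\textbf{The commutator term and the case $n=1$.} In a frame with $\varphi_1=|\nabla\varphi|$, the commutator term equals $|\nabla\varphi|^2$ times the sum of the coefficient entries in the directions $a\ge2$. It vanishes identically for $n=1$, so it is coercive only for $n\ge2$, not ``whenever $n\ge1$''. For $n=1$, which occurs only in the supercritical theorem, your bookkeeping then contains no $-c\,\omega$ term. You record only that the profile term is harmless up to $O(\lambda^{-2})$, although you remark that it ``even helps''. There it must be the sole source of decay, and that needs the quantitative bound $f'(r)-(1+k\alpha)f(r)\cot r=(\mu+o(1))r^{\beta-1}$. The paper uses exactly this to absorb the leftover $O(\lambda^{-2})W$ term for large $\tau$.

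\textbf{The zeroth-order term.} You argue its sign only heuristically, from the speed of round spheres and from $|E(r)|\le Cr^{\delta_0}$. This is not enough, for three reasons. First, $\partial_\varphi\mathcal G$ is evaluated at the actual, non-spherical $\nabla^2\varphi$ and $\nabla\varphi$. Second, a size bound on $E$ gives no bound on $rE'(r)$. Third, being ``at least quadratic in $\nabla\varphi$'' does not make a term absorbable: $\omega$ is only known to be bounded, not small, so an $O(1)$ positive coefficient would be fatal.

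What is needed is an exact computation in which, by the homogeneity of $\sigma_k$, the $\varphi$-dependence at fixed derivatives sits in explicit scalar prefactors. With $\varphi=\log\tan(\rho/2)$ one has $v=\sqrt{1+|\nabla\varphi|^2}$ and $\hat h=(\lambda v\sin\rho)^{-1}\bigl[\cos\rho\,\delta-(\sigma-\nabla\varphi\otimes\nabla\varphi/v^2)\nabla^2\varphi\bigr]$. The zeroth-order contribution is then exactly $-\lambda^{\beta-1}v\sigma_k^\alpha(\hat h)\bigl[f'-(1+k\alpha)f\cot\rho\bigr]2W+\lambda^{\beta-2}f\sum_i(\partial\sigma_k^\alpha/\partial\hat\kappa_i)\,2W$.

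The first term is $\le0$ precisely by the root-convexity hypothesis, via $rf'\ge(1+k\alpha)f$. Your proposal never invokes this hypothesis. The second term combines with the commutator term into $2W\lambda^\beta f\bigl(\lambda^{-2}\,\partial\sigma_k^\alpha/\partial\hat\kappa_1-(\lambda^{-1}\cot\rho)^2\sum_{a\ge2}\partial\sigma_k^\alpha/\partial\hat\kappa_a\bigr)$. Since $\lambda^{-1}\cot\rho\ge c$, this is $\le -cW$ for $n\ge2$ and large $\tau$, which settles your worry about the critical case.

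Your $\log\widetilde\rho$ version can be completed. You would have to carry out this computation and check that every deviation from the homogeneous structure is $O(\lambda^{-2})$ times quantities controlled by the $C^2$ estimate.
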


\begin{proof}
We derive the gradient estimate by the maximum principle.  Define
\begin{equation}\label{eq:spherical-logarithmic-radius}
  \varphi(\theta,\tau)
  :=\log\tan(\frac{\rho(\theta,t(\tau))}2),
\end{equation}
here $\rho$ is the unnormalized radial function evaluated at $t(\tau)$. Since $\varphi_\rho=(\sin\rho)^{-1}$, it follows
\begin{equation}\label{eq:varphi-gradient-relations}
  \nabla\varphi=\frac{\nabla\rho}{\sin\rho},
  \qquad
  v=\sqrt{1+|\nabla\varphi|^2},
  \qquad
  \nabla\widetilde\rho=\lambda\sin\rho\,\nabla\varphi,
\end{equation}
and
\[
  \rho_i=\sin\rho\,\varphi_i,
  \qquad
  \rho_{ij}
  =\sin\rho\,\varphi_{ij}
   +\sin\rho\cos\rho\,\varphi_i\varphi_j.
\]
Substituting these identities into~\eqref{eq:weingarten-map-rho} and using~\eqref{eq:normalized-weingarten-map-definition}, we obtain
\begin{equation}\label{eq:hat-h-in-varphi-spherical}
  \hat h_i{}^j
  =\frac{1}{\lambda v\sin\rho}
    \left[
      \cos\rho\,\delta_i{}^j
      -\left(
         \sigma^{jp}-\frac{\varphi^j\varphi^p}{v^2}
       \right)\varphi_{pi}
    \right].
\end{equation}
Replacing $\rho$ in \eqref{eq:radial-flow} by
$\varphi$ and using \eqref{eq:time-and-lambda-identities}, we obtain
\begin{equation}\label{eq:varphi-evolution-spherical}
  \partial_\tau\varphi
  =-\lambda^{\beta-1}\frac{f(\rho)}{\sin\rho}
      \sigma_k^\alpha(\hat h)v.
\end{equation}

Put
\begin{equation}\label{eq:gradient-auxiliary-function-spherical}
  W:=\frac12|\nabla\varphi|^2,
\end{equation}
where the norm and derivatives are taken with respect to the standard
metric on $\mathbb S^n$. At a spatial maximum $(\theta_0,\tau)$ of $W$, we have
\begin{equation}\label{eq:C1-maximum-point-relation}
  \varphi^i\varphi_{ij}=0,\quad
  \nabla v=0,\quad
  \text{ and }
  \nabla^2W\leq0.
\end{equation}
Then equation \eqref{eq:varphi-evolution-spherical} gives
\begin{equation}\label{eq:W-time-derivative-first-expansion-spherical}
  \begin{aligned}
  \partial_\tau W
  ={}&-\varphi^m\nabla_m
      \left[
        \lambda^{\beta-1}\frac{f(\rho)}{\sin\rho}
        \sigma_k^\alpha(\hat h)v
      \right]\\
  ={}&-v\sigma_k^\alpha(\hat h)\,\varphi^m\nabla_m
      \left(\lambda^{\beta-1}\frac{f(\rho)}{\sin\rho}\right)-\lambda^{\beta-1}\frac{f(\rho)}{\sin\rho}v
       \frac{\partial\sigma_k^\alpha}
            {\partial\hat h_i{}^j}
       \varphi^m\nabla_m\hat h_i{}^j
         \end{aligned}
\end{equation}
where we used the fact that $\n v = 0$ at the maximum point.

Since $\rho_m=\sin\rho\,\varphi_m$, we have
\begin{equation}\label{eq:coefficient-and-v-gradient-at-W-maximum-spherical}
  \varphi^m\nabla_m
  \left(\lambda^{\beta-1}\frac{f(\rho)}{\sin\rho}\right)
  =\lambda^{\beta-1}\bigl(f'(\rho)-f(\rho)\cot\rho\bigr)
     |\nabla\varphi|^2.
\end{equation}

Differentiating \eqref{eq:hat-h-in-varphi-spherical} and using~\eqref{eq:C1-maximum-point-relation}, we obtain 
\begin{equation}\label{eq:directional-derivative-hat-h-spherical}
  \begin{aligned}
  \varphi^m\nabla_m\hat h_i{}^j
  ={}&-\cos\rho\,|\nabla\varphi|^2\hat h_i{}^j
      -\frac{\sin\rho}{\lambda v}
       |\nabla\varphi|^2\delta_i{}^j\\
     &-\frac{1}{\lambda v\sin\rho}
       \left(
         \sigma^{jp}-\frac{\varphi^j\varphi^p}{v^2}
       \right)\varphi^m\varphi_{pi m}.
  \end{aligned}
\end{equation}
Using the $k\alpha$-homogeneity identity
\begin{equation}\label{eq:sigma-k-alpha-Euler-identity-C1-spherical}
  \frac{\partial\sigma_k^\alpha}{\partial\hat h_i{}^j}
  \hat h_i{}^j
  =k\alpha\sigma_k^\alpha(\hat h),
\end{equation}
and substituting
\eqref{eq:coefficient-and-v-gradient-at-W-maximum-spherical} and
\eqref{eq:directional-derivative-hat-h-spherical} into
\eqref{eq:W-time-derivative-first-expansion-spherical}, we obtain
\begin{equation}\label{eq:W-before-third-derivative-commutation-spherical}
  \begin{aligned}
  \partial_\tau W
  ={}&\lambda^{\beta-2}f(\rho)
      (\dot\sigma_k^\alpha)^{pi}\varphi^m\varphi_{pi m}\\
   &
   -\lambda^{\beta-1}v\sigma_k^\alpha(\hat h)
      \left[f'(\rho)-(1+k\alpha)f(\rho)\cot\rho\right]
      |\nabla\varphi|^2\\
   &+\lambda^{\beta-2}f(\rho)
      \frac{\partial\sigma_k^\alpha}
           {\partial\hat h_i{}^j}\delta_i{}^j
      |\nabla\varphi|^2.
  \end{aligned}
\end{equation}
where
\begin{equation}\label{eq:C1-linearized-coefficient-spherical}
  (\dot\sigma_k^\alpha)^{pq}
  :=\frac{\partial\sigma_k^\alpha}{\partial\hat h_q{}^j}g^{jp} = \frac{\partial\sigma_k^\alpha}{\partial\hat h_q{}^j}
  \frac1{\sin^2\rho}
  \left(
    \sigma^{jp}-\frac{\varphi^j\varphi^p}{v^2}
  \right).
\end{equation}
Clearly, $(\dot\sigma_k^\alpha)^{pq}$ is positive definite.

By the Ricci identity on $\mathbb{S}^n$, we have
\begin{equation}\label{eq:third-derivative-commutation-for-W-spherical}
  \varphi^m\varphi_{pi m}
  =\nabla_p\nabla_iW
   -\varphi_p{}^m\varphi_{mi}
   -\bigl(\sigma_{pi}|\nabla\varphi|^2-\varphi_p\varphi_i\bigr).
\end{equation}
Consequently, at a maximum point of $W$,
\begin{equation}\label{eq:gradient-maximum-calculation-spherical}
  \begin{aligned}
  \partial_\tau W
  ={}&\lambda^{\beta-2}f(\rho)
      (\dot\sigma_k^\alpha)^{pi}\nabla_p\nabla_iW-\lambda^{\beta-2}f(\rho)
      (\dot\sigma_k^\alpha)^{pi}
      \varphi_p{}^m\varphi_{mi}\\
   &-\lambda^{\beta-2}f(\rho)
      (\dot\sigma_k^\alpha)^{pi}
      \bigl(\sigma_{pi}|\nabla\varphi|^2-\varphi_p\varphi_i\bigr)\\
   &-\lambda^{\beta-1}v\sigma_k^\alpha(\hat h)
      \left[f'(\rho)-(1+k\alpha)f(\rho)\cot\rho\right]
      |\nabla\varphi|^2\\
   &+\lambda^{\beta-2}f(\rho)
      \frac{\partial\sigma_k^\alpha}
           {\partial\hat h_i{}^j}\delta_i{}^j
      |\nabla\varphi|^2.
  \end{aligned}
\end{equation}

To display the sign of every term, choose at the maximum point an
orthonormal frame such that
\begin{equation}\label{eq:C1-adapted-frame-spherical}
  \varphi_1=|\nabla\varphi|=\sqrt{2W},
  \qquad \varphi_a=0\quad(a=2,\ldots,n).
\end{equation}
If $W>0$, the relation $\varphi^i\varphi_{ij}=0$ at the maximum
point implies $\varphi_{1j}=0$ for every $j$.  We may therefore rotate
only the orthogonal complement of $\partial_{x^1}$ so that the
symmetric block $(\varphi_{ab})_{a,b\geq2}$ is diagonal.  This rotation
does not change the direction of $\nabla\varphi$.  In the resulting
frame,
\[
  \left(
    \sigma^{jp}-\frac{\varphi^j\varphi^p}{v^2}
  \right)
  =\operatorname{diag}(v^{-2},1,\ldots,1).
\]
It now follows from \eqref{eq:hat-h-in-varphi-spherical} that
$\hat h_i{}^j$ is diagonal in the same frame.  Thus
$\partial_{x^1}$ is automatically a principal direction, and
\begin{equation}\label{eq:principal-curvatures-at-W-maximum-spherical}
  \hat\kappa_1=\frac{\cot\rho}{\lambda v},
  \qquad
  \hat\kappa_a
  =\frac{\cos\rho-\varphi_{aa}}
         {\lambda v\sin\rho},
  \quad a=2,\ldots,n.
\end{equation}
Then,
$$\sigma_{pi}(\dot{\sigma}_k^{\alpha})^{pi} = \alpha\sigma_{k}^{\alpha-1}\frac{\p \sigma_k}{\p \hat{h}_i{ }^j}\delta_j{ }^i\frac{1}{\sin^2\rho} - \alpha\sigma_k^{\alpha-1}\frac{\p\sigma_k}{\p \hat{h}_1{ }^1}\frac{1}{\sin^2\rho}\frac{2W}{v^2}$$
and
$$(\dot{\sigma}_k^{\alpha})^{pi}\varphi_p\varphi_i = \alpha\sigma_{k}^{\alpha-1}\frac{\p \sigma_k}{\p \hat{h}_1{ }^1}\frac{1}{\sin^2\rho}2W - \alpha\sigma_k^{\alpha-1}\frac{\p\sigma_k}{\p \hat{h}_1{ }^1}\frac{1}{\sin^2\rho}\frac{4W^2}{v^2}.$$
Combining these two identities and using
$|\nabla\varphi|^2=2W$, we obtain
\begin{equation}\label{eq:spherical-curvature-gradient-contraction}
  \begin{aligned}
  &\lambda^{\beta-2}f(\rho)
   (\dot\sigma_k^\alpha)^{pi}
   \bigl(\sigma_{pi}|\nabla\varphi|^2
         -\varphi_p\varphi_i\bigr)\\
  ={}&\frac{2W\alpha\lambda^{\beta-2}f(\rho)
             \sigma_k^{\alpha-1}}{\sin^2\rho}
       \left(
         \frac{\partial\sigma_k}{\partial\hat h_i{}^j}
           \delta_i{}^j
         -\frac{\partial\sigma_k}{\partial\hat h_1{}^1}
       \right)\\
  ={}&\frac{2W\alpha\lambda^{\beta-2}f(\rho)
             \sigma_k^{\alpha-1}}{\sin^2\rho}
       \sum_{a=2}^n
         \frac{\partial\sigma_k}{\partial\hat\kappa_a}.
  \end{aligned}
\end{equation}
Consequently, the first term in
\eqref{eq:gradient-maximum-calculation-spherical} is nonpositive, and
the remaining terms give
\begin{equation}\label{eq:gradient-maximum-component-expansion-spherical}
  \begin{aligned}
  D^+W_{\max}
  \leq{}&-\lambda^{\beta-1}v\sigma_k^\alpha(\hat h)
      \left[f'(\rho)-(1+k\alpha)f(\rho)\cot\rho\right]
      2W_{\max}\\
   &-\lambda^{\beta-2}\frac{f(\rho)}{\sin^2\rho}
      \sum_{a=2}^n
      \frac{\partial\sigma_k^\alpha}{\partial\hat\kappa_a}
      \varphi_{aa}^2\\
   &+\lambda^{\beta-2}f(\rho)2W_{\max}
      \left(
        \frac{\partial\sigma_k^\alpha}{\partial\hat\kappa_1}
        -\cot^2\rho\sum_{a=2}^n
         \frac{\partial\sigma_k^\alpha}{\partial\hat\kappa_a}
      \right).
  \end{aligned}
\end{equation}

For completeness, the sign of the profile term follows directly from
\eqref{eq:main-profile-root-convexity} or
\eqref{eq:critical-profile-root-convexity}.  Indeed, convexity of
$f^{1/(1+k\alpha)}$, together with $f(0)=0$, gives
\begin{equation}\label{eq:profile-first-derivative-sign-spherical}
  rf'(r)\geq(1+k\alpha)f(r),
  \qquad
  f'(r)-(1+k\alpha)f(r)\cot r\geq0.
\end{equation}

We now use the normalized $C^2$ estimate.  By
Lemma~\ref{lem:normalized-C2-estimate-spherical}, the principal
curvature vector $\hat\kappa$ remains in a fixed compact subset of the
positive cone.  Hence
\begin{equation}\label{eq:uniform-ellipticity-after-C2-spherical}
  0<c\leq
  \frac{\partial\sigma_k^\alpha}{\partial\hat\kappa_i}
  \leq C,
  \qquad
  0<c\leq\sigma_k^\alpha(\hat h)\leq C.
\end{equation}
Moreover, the $C^0$ estimate and
\eqref{eq:normalized-profile-coefficient-bounds} give
\begin{equation}\label{eq:decay-coefficients-spherical}
  0<c\leq\lambda^\beta f(\rho)\leq C,
  \qquad
  0<c\leq\lambda^{-1}\cot\rho\leq C.
\end{equation}

Suppose first that $n\geq2$.  Using
\eqref{eq:uniform-ellipticity-after-C2-spherical} and
\eqref{eq:decay-coefficients-spherical}, the last line of
\eqref{eq:gradient-maximum-component-expansion-spherical} satisfies
\begin{equation}\label{eq:spherical-ambient-gradient-coercivity}
  \begin{aligned}
  &2W_{\max}\lambda^{\beta-2}f(\rho)
      \left(
        \frac{\partial\sigma_k^\alpha}{\partial\hat\kappa_1}
        -\cot^2\rho\sum_{a=2}^n
         \frac{\partial\sigma_k^\alpha}{\partial\hat\kappa_a}
      \right)\\
  ={}&2W_{\max}\lambda^\beta f(\rho)
      \left(
        \lambda^{-2}
        \frac{\partial\sigma_k^\alpha}{\partial\hat\kappa_1}
        -(\lambda^{-1}\cot\rho)^2
          \sum_{a=2}^n
          \frac{\partial\sigma_k^\alpha}{\partial\hat\kappa_a}
      \right)\\
  \leq{}&-cW_{\max}
  \end{aligned}
\end{equation}
for all sufficiently large $\tau$.  The first two terms on the
right-hand side of
\eqref{eq:gradient-maximum-component-expansion-spherical} are
nonpositive by \eqref{eq:profile-first-derivative-sign-spherical} and
ellipticity.  We consequently obtain
\begin{equation}\label{eq:W-exponential-inequality-n-geq-two}
  D^+W_{\max}\leq-cW_{\max}
\end{equation}
for all sufficiently large $\tau$.

It remains to consider $n=1$.  This case occurs only under the
supercritical assumptions of Theorem~\ref{thm:main}.  Put
$\mu:=\beta-1-k\alpha>0$.  The expansion
$f(r)=r^\beta+o(r^\beta)$ and the corresponding differentiated
expansion give
\begin{equation}\label{eq:supercritical-profile-gradient-coercivity}
  f'(r)-(1+k\alpha)f(r)\cot r
  =\bigl(\mu+o(1)\bigr)r^{\beta-1}
  \geq cr^{\beta-1}
\end{equation}
for sufficiently small $r$.  Since
$\lambda^{\beta-1}\rho^{\beta-1}
=\widetilde\rho^{\beta-1}$, the first term in
\eqref{eq:gradient-maximum-component-expansion-spherical} is bounded
above by $-cW_{\max}$.  When $n=1$, the last line of that equation has
no negative summation term, but its remaining positive part is bounded
by $C\lambda^{-2}W_{\max}$.  After increasing the initial time if
necessary, it is absorbed by the profile term.  Thus
\begin{equation}\label{eq:W-exponential-inequality-n-one}
  D^+W_{\max}\leq-cW_{\max}
\end{equation}
also in this case.

Combining \eqref{eq:W-exponential-inequality-n-geq-two} and
\eqref{eq:W-exponential-inequality-n-one}, and enlarging the constant
to cover the remaining compact time interval, yields
\begin{equation}\label{eq:W-exponential-decay-spherical}
  W_{\max}(\tau)\leq Ce^{-c\tau}.
\end{equation}
Finally, \eqref{eq:varphi-gradient-relations} and the upper bound for
$\lambda\sin\rho$ imply
\[
  |\nabla^\sigma\widetilde\rho|_\sigma
  =\lambda\sin\rho\,|\nabla^\sigma\varphi|_\sigma
  \leq C\sqrt{W_{\max}}
  \leq Ce^{-c\tau},
\]
after changing $c$.  This proves
\eqref{eq:exponential-gradient-decay-spherical}.
\end{proof}
\section{Proof of Theorems}

\begin{proof}[Proof of Theorems~\ref{thm:main} and
\ref{thm:main-critical}]
The preceding estimates are uniform on every finite normalized time
interval.  In particular, Lemma~\ref{lem:normalized-C2-estimate-spherical}
shows that the principal curvature vector remains in a fixed compact
subset of the positive cone.  Hence the scalar equation
\eqref{eq:normalized-radial-flow-hat-h} is uniformly parabolic.  More
precisely, differentiating \eqref{eq:normalized-radial-flow-hat-h} in
$\tau$ shows that $\partial_\tau\widetilde\rho$ solves a linear
uniformly parabolic equation with bounded coefficients, so the
Krylov--Safonov theorem gives a uniform $C^\theta$ bound for
$\partial_\tau\widetilde\rho$, for some $\theta\in(0,1)$.  At each
fixed time, \eqref{eq:normalized-radial-flow-hat-h} can then be
written as
\[
  \sigma_k^{1/k}(\hat h[\widetilde\rho])
  =\left(
     \frac{\gamma_0\widetilde\rho-\partial_\tau\widetilde\rho}
          {\lambda^\beta f(\widetilde\rho/\lambda)\,v}
   \right)^{\!1/(k\alpha)},
\]
a uniformly elliptic equation whose left-hand side is concave in
$\nabla^2_\sigma\widetilde\rho$ and whose right-hand side is uniformly
$C^\theta$.  The Evans--Krylov theorem \cite{Krylov87} therefore gives
a uniform $C^{2,\theta}$ estimate.  The parabolic
Schauder estimates and the usual bootstrap argument then yield
\begin{equation}\label{eq:all-higher-order-estimates-spherical}
  \|\widetilde\rho(\cdot,\tau)\|_{C^m(\mathbb S^n)}
  \leq C_m,
  \qquad m=0,1,2,\ldots,
\end{equation}
uniformly in $\tau$.  These estimates also give long-time existence by
the standard continuation criterion.

By Lemma~\ref{lem:exponential-gradient-decay-spherical},
\begin{equation}\label{eq:first-derivative-exponential-decay-theorems}
  \|\nabla^\sigma\widetilde\rho(\cdot,\tau)\|_{C^0}
  \leq Ce^{-c\tau}.
\end{equation}
Combining this estimate with
\eqref{eq:all-higher-order-estimates-spherical} and applying the
interpolation inequalities on $\mathbb S^n$, we obtain, for every
$m\geq1$, constants $C_m,c_m>0$ such that
\begin{equation}\label{eq:all-spatial-derivatives-exponential-decay}
  \|(\nabla^\sigma)^m\widetilde\rho(\cdot,\tau)\|_{C^0}
  \leq C_me^{-c_m\tau}.
\end{equation}
In particular, the oscillation of $\widetilde\rho$ decays
exponentially.  Thus the only possible limit is spatially constant.

To identify the limit, let
\[
  \overline\rho(\tau)
  :=\frac{1}{|\mathbb S^n|}
    \int_{\mathbb S^n}\widetilde\rho(\theta,\tau)\,d\mu_\sigma.
\]
Using \eqref{eq:normalized-radial-flow-hat-h}, the expansion
$\lambda^{-1}\cot(\widetilde\rho/\lambda)
=\widetilde\rho^{-1}+O(\lambda^{-2})$, and
\eqref{eq:all-spatial-derivatives-exponential-decay}, we reduce the
evolution of the constant mode to the corresponding spherical ODE,
up to an exponentially decaying error.

In the supercritical case, put
$\mu:=\beta-k\alpha-1>0$.  The assumptions on $f$ give
\begin{equation}\label{eq:supercritical-average-asymptotic-ode}
  \overline\rho'
  =\gamma_0\overline\rho
     \bigl(1-\overline\rho^\mu\bigr)
   +O(e^{-c\tau}).
\end{equation}
The equilibrium $1$ of the limiting ODE is strictly stable.  The
two-sided $C^0$ estimate and the standard comparison argument for
\eqref{eq:supercritical-average-asymptotic-ode} therefore imply
\begin{equation}\label{eq:supercritical-average-convergence}
  |\overline\rho(\tau)-1|\leq Ce^{-c\tau}.
\end{equation}
Together with
\eqref{eq:all-spatial-derivatives-exponential-decay}, this proves that
$\widetilde\rho$ converges exponentially to $1$ in every $C^m$ norm.

In the critical case $\beta=1+k\alpha$, the leading terms in the
normalized equation cancel.  The condition
$g(r)=O(r^{1+k\alpha+\delta})$, the spherical expansion above, and
\eqref{eq:all-spatial-derivatives-exponential-decay} give
\begin{equation}\label{eq:critical-average-integrable-derivative}
  |\overline\rho'(\tau)|\leq Ce^{-c\tau}.
\end{equation}
Consequently, there exists $R_\infty>0$ such that
\begin{equation}\label{eq:critical-average-convergence}
  |\overline\rho(\tau)-R_\infty|
  \leq Ce^{-c\tau}.
\end{equation}
Again using
\eqref{eq:all-spatial-derivatives-exponential-decay}, we conclude that
$\widetilde\rho$ converges exponentially to $R_\infty$ in every
$C^m$ norm.

Thus, in either case, the normalized radial graphs converge smoothly
and exponentially to a constant radial graph, namely a geodesic
sphere centred at $o$.  Since
$\rho=\widetilde\rho/\lambda$ and $\lambda\to\infty$, the original
hypersurfaces contract smoothly to $o$ as $t\to\infty$.  This completes
the proofs of both theorems.
\end{proof}
%%%%%%%%%%%%%%%%%%%%%%%%%

%%%%%%%%%%%%%%%%%%%
{\bf AI usage.}
During the preparation of this work, the authors used GPT-5.6 Sol and KIMI K3 for exploratory computations, possible proof directions, and editorial polishing.  The authors have thoroughly checked all mathematical derivations and proofs and take full responsibility for the entire content of this manuscript.

%%%%%%%%%%%%%%%%%%%%%
{\bf Acknowledgements.}
W. Sheng was partially supported by National Key R$\&$D Program of China (No. 2022YFA1005500) and Natural Science Foundation of China under Grant No. 12571063.

%%%%%%%%%%%%%%%%%%%%%%%

\end{document}